\newtheorem{Lemma}{Lemma}
\newtheorem{theorem}{Theorem}
\newtheorem{remark}{Remark}
\newtheorem{example}{Example}
\numberwithin{equation}{section}
\numberwithin{Lem}{section}
\numberwithin{Defi}{section}
\numberwithin{Theo}{section}
\numberwithin{Rem}{section}
\numberwithin{Coro}{section}
\numberwithin{Fig}{section}
\title{A Fractional spectral method for weakly singular Volterra integro-differential equations with delays of the third-kind}
\author[1]{Borui Zhao\thanks{ m202270040@hust.edu.cn}}
\affil[1]{School of Mathematics and Statistics, Huazhong University of Science and Technology, Wuhan 430074, China}
\date{}
\begin{document}
\maketitle
\begin{abstract}
	In this paper, we present a fractional spectral collocation method for solving a class of weakly singular Volterra integro-differential equations (VDIEs) with proportional delays and cordial operators. Assuming the underlying solutions are in a specific function space, we derive error estimates in the $L^2_{\omega^{\alpha,\beta,\lambda}}$ and $L^{\infty}$-norms. A rigorous proof reveals that the numerical errors decay exponentially with the appropriate selections of parameters $\lambda$. Subsequently, numerical experiments are conducted to validate the effectiveness of the method.
\end{abstract}
	\noindent {\bf Keywords:} {Volterra integro-differential equations, Cordial operators, Spectral collocation method, Fractional Jacobi polynomials, proportional delays.}

\section{Introduction}\label{section_Introduction}
We consider the third-kind VIDEs with weakly singular kernels and proportional delays, which typically describe dynamic systems with memory and delay effects:
	\begin{align}
		t^{\gamma} y^{\prime}(t)= & p(t) y(t)+q(t) y(\varepsilon t)+g(t)+\int_0^t(t-s)^{-\mu}  s^{\mu+\gamma-1} K_1(t, s) y(s) ds \notag \\
		&+\varepsilon^{-\gamma} \int_0^{\varepsilon t}(\varepsilon t-\tau)^{-\mu} \tau^{\mu+\gamma-1} K_2(t, \tau) y(\tau) d \tau ,\  \
		t \in I_0:=[0,T],   \label{eq_tr_prime}\\
		y(0)=&  y_0,\label{eq_tr_origin}
	\end{align}
where$\quad 0 \leq \mu<1$, $\gamma>0$, $\mu+\gamma \geq 1$, $p{(t)}$, $q{(t)}$, $g{(t)}$, $K_1(t, s)$, $K_2(t, \tau)$ are given smooth functions,which equals to:
	\begin{align} 
		y^{\prime}(t)=&p_1(t) y(t)+q_1(t) y(\varepsilon t)+g_1(t)+\left(\mathcal{K}_1y\right)(t)+\left(\mathcal{K}_2y\right)(t),  \   \
		t \in I_0:=[0,T] ,\label{eq_prime}\\
		y(0)=& y_0.\label{eq_origin}
	\end{align}
where 
	 \begin{align}
	 	&p_1(t)=t^{-\gamma} p(t), q_1(t)=t^{-\gamma} q(t), g_1(t)=t^{-\gamma} g(t),\\
	 	&\left(\mathcal{K}_1y\right)(t)=t^{-\gamma} \int_0^t(t-s)^{-\mu} s^{\mu+\gamma-1} 
	 	K_1(t, s) y(s) d s,\\
	 	&\left(\mathcal{K}_2y\right)(t)=(\varepsilon t)^{-\gamma} \int_0^{\varepsilon t}(\varepsilon t-\tau)^{-\mu} \tau^{\mu+\gamma-1} K_2(t, \tau) y(\tau) d \tau.
	 \end{align}
\eqref{eq_prime} is called cordial $Volterra$ integro-differential equations (CVIDEs), $\left(\mathcal{K}_iy\right)(t)(i=1,2)$ are viewed as cordial operators\cite{operator3_1}\cite{operator3_2}, which demonstrate the characteristic as
	$$
	\left(\mathcal{K}_iy\right)(t):\left\{\begin{array}{l}
		K_i(0,0)=0: \text { is a compact operator,} \\
		K_i(0,0) \neq 0: \text { is a noncompact operator.}
	\end{array}\right.
	$$

Volterra integral equations (VIEs) and Volterra integro-differential equations (VIDEs) are widely used in mathematical models of biology and physics, with extremely important applications in population dynamics\cite{application_population} and viscoelastic phenomena\cite{application_粘弹性现象}. In recent years, research on the second kind of VIEs and VIDEs has been particularly extensive. Chen and Tang\cite{spectal_TangTao} have innovatively constructed a spectral collocation method for the second kind of weakly singular VIEs. Based on this, Chen and others have  proposed a series of numerical schemes\cite{spectral_Chen_WVIDEs}-\cite{spectral_Chen_WVIDEs_delay} for the second kind of VIDEs and with proportional delays. 
In addition, other methods are used to solve the second kind of VIEs and VIDEs, including collocation methods\cite{collocation_1}-\cite{collocation_8} and spectral methods\cite{spectral_1}-\cite{spectral_4}. 
The works\cite{VIDEs_delays_1}-\cite{VIDEs_delays_5} use continuous and discontinuous Petrov–Galerkin methods to solve delay-VIDEs, besides, $Sinc$ and $Tau$ methods. More importantly, \cite{WVIDEs_hp_MaZheng} and \cite{WVIDEs_hp_QinYu} have constructed hp-version methods to solve the second kind of weakly singular VIDEs and weakly singular VIDEs with delay terms, achieving the exponential rate of convergence.

However, research on the third-kind of VIEs and VIDEs is relatively scarce. Currently, numerical  methods for the third-kind of VIEs with the integral operator $\mathcal{K}_1$
include the Legendre-Galerkin method\cite{3VIEs_Legendre-Galerkin}, among others. Ma and Huang\cite{3VIEs_MaZheng} have proposed a collocation method that is suitable for non-smooth solutions, achieving the optimal order of global convergence through modified graded meshes. Ma\cite{3VIEs_MaXiaohua} and others have used the Chebyshev spectral collocation method after smooth transformation to solve the equations, obtaining spectral accuracy and providing a rigorous convergence analysis.For the third-kind of VIDEs, Jiang and Ma\cite{3VIDEs_mu0_Jiang}, Tang and Tohidi\cite{3VIDEs_mu0_Tang}, Shayanfard\cite{3VIDEs_mu0_Shayanfard} have considered the case where $\mu = 0$
, applying methods such as the Chebyshev and generalized mapping Laguerre spectral collocation to solve the equations. For CVIDEs with non-compact kernel cordial operators, \cite{CVIDEs_piecewise-polynomial} provides a numerical scheme using a piecewise polynomial collocation method. Ma and Huang\cite{CVIDEs_MaXiaohua} have considered non-smooth solutions for the equations after smooth transformation, applying spectral collocation methods to achieve high precision and low computational cost.

Hou and others \cite{Muntz} have applied fractional Jacobi polynomials to VIEs and obtained excellent numerical results. There
have been also some recent developments such as \cite{3VIEs_MaZheng} and \cite{CVIDEs_MaZheng}, Ma and Huang have proposed an hp-version error estimate for the third-kind of VIEs and VIDEs with non-compact operators, using the $H^1$-norm as a weighting factor, to address non-smooth solutions. Fractional Jacobi polynomials have been applied in the two airticles. The purpose of this paper is to design a fractional polynomial collocation method with a fractional coefficient $\lambda(0 < \lambda \leq 1)$ for solving the third-kind of VIDEs with proportional delays, and to provide a convergence analysis. The solutions of the equations \eqref{eq_prime} often exhibit weak singularity at the initial point $t=0$. We can construct numerical solutions in the form of $\sum_{m=0}^{N}a_mt^{m\lambda}$\cite{Ma毕业论文}, and choose an appropriate $\lambda$ in order to counteract this singularity.

This paper is organized as follows. In Section \ref{section_Regularity }, we will prove the regularity of the solution of \eqref{eq_prime}. In Section \ref{section_Preliminaries}, we will introduce specific function spaces and lemmas that will be used to construct convergence results in the following text. In Section \ref{section_Numerical Scheme}, a numerical scheme will be constructed for the equation \eqref{eq_prime} and apply fractional Jacobi polynomials to estimate the exact solutions. In Section \ref{section_Convergence Analysis}, we will demonstrate the convergence analysis of the proposed method under the weighted $L^{\infty}$ and $L^2_{\omega^{\alpha,\beta}}$-norm. In Section \ref{section_Numerical experiments},  numerical experiments will be presented to prove the theoretical analysis of Section \ref{section_Convergence Analysis}.

\section{Regularity of the Solution}\label{section_Regularity }
In \cite{Ma毕业论文}, the existence and uniqueness of solution in \eqref{eq_prime} with $\mu \neq 0$ and the regularity results have already been presented. Similarly, we extend the equation to the case with the delay term $\varepsilon$.

\begin{theorem}
	Let $D = {\lbrace(t,s),(t,\tau):0 \leq s,\tau \leq t \leq T\rbrace}$, $ K_1(t, s),K_2(t, \tau) \in C(D)$ and $p_1{(t)}$,$q_1{(t)}$,$g_1{(t)}$\\
	$ \in C[0, T]$, then\eqref{eq_prime} has a unique solution $y \in C^1{[0, T]}$ on the interval $[0, T]$. Furthermore, when $p_1{(t)},q_1{(t)},g_1{(t)} \in C^{m}[0, T], K_1(t, s),K_2(t, \tau) \in C^{m}(D)$, we can obtain the solution of \eqref{eq_prime} $y \in C^{m+1}[0, T]$. 
\end{theorem}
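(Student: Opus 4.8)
The plan is to recast the initial value problem \eqref{eq_prime}--\eqref{eq_origin} as a fixed-point problem and apply a Picard-type iteration argument, exactly as in the non-delay case treated in \cite{Ma毕业论文}, but with the proportional-delay contributions carried along. First I would integrate \eqref{eq_prime} from $0$ to $t$ and use \eqref{eq_origin}, writing
\begin{equation*}
y(t) = y_0 + \int_0^t\Bigl[p_1(r)y(r) + q_1(r)y(\varepsilon r) + g_1(r) + (\mathcal{K}_1y)(r) + (\mathcal{K}_2y)(r)\Bigr]\zd r =: (\mathcal{T}y)(t).
\end{equation*}
The key point is that $\mathcal{T}$ maps $C[0,T]$ into itself: the ordinary terms are clearly continuous, the delay term $q_1(r)y(\varepsilon r)$ is continuous because $0<\varepsilon\le 1$ keeps the argument in $[0,T]$, and the two cordial integrals $(\mathcal{K}_iy)(t)$ are continuous on $[0,T]$ — including at $t=0$, where the substitution $s = t\sigma$ (resp.\ $\tau = \varepsilon t\sigma$) turns each weakly singular integral into $\int_0^1(1-\sigma)^{-\mu}\sigma^{\mu+\gamma-1}K_i(\cdot)y(\cdot)\zd\sigma$, a bounded expression since $\mu+\gamma\ge 1$ and $\mu<1$ make the $\sigma$-integral a convergent Beta-type integral uniformly in $t$.

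Next I would establish a contraction-type estimate. For $y,z\in C[0,T]$ one gets $|(\mathcal{T}y)(t)-(\mathcal{T}z)(t)| \le C\int_0^t \|y-z\|_{\infty,[0,r]}\,\zd r$ after bounding $|(\mathcal{K}_iy)(r)-(\mathcal{K}_iz)(r)| \le C\,\|y-z\|_{\infty,[0,r]}$ via the same change of variables (using $\varepsilon r \le r$ for $\mathcal{K}_2$). Iterating $\mathcal{T}$ and using the standard estimate on the iterated kernels shows some power $\mathcal{T}^k$ is a contraction on $C[0,T]$, hence $\mathcal{T}$ has a unique fixed point $y\in C[0,T]$; differentiating the fixed-point identity and inspecting the right-hand side of \eqref{eq_prime} shows $y\in C^1[0,T]$, which gives the base case.

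For the higher regularity claim I would argue by induction on $m$. Assuming $y\in C^m[0,T]$ and the data in $C^m$, one differentiates \eqref{eq_prime} $m$ times; the cordial terms are handled by first performing the change of variables $s=t\sigma$ so that $t$ appears only through smooth arguments of $K_i$ and $y$ and through the smooth factor $t^{m+\gamma-\mu}$-type prefactors, after which differentiation under the integral sign is legitimate and each $\partial_t^j(\mathcal{K}_iy)$ is seen to be continuous on $[0,T]$; then $y'\in C^m[0,T]$, i.e.\ $y\in C^{m+1}[0,T]$. The main obstacle is precisely the behavior of the cordial operators and their derivatives at $t=0$: one must check that after the rescaling the resulting integrands and all their $t$-derivatives remain integrable against $(1-\sigma)^{-\mu}\sigma^{\mu+\gamma-1}$ uniformly near $t=0$, which is where the hypotheses $0\le\mu<1$, $\gamma>0$ and $\mu+\gamma\ge 1$ are used in an essential way. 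Everything else is a routine adaptation of the $\varepsilon=1$ argument in \cite{Ma毕业论文}, since the delay merely replaces $t$ by $\varepsilon t$ in arguments that already lie in $[0,T]$.
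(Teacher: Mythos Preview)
Your proposal is correct and follows essentially the same approach as the paper: both integrate \eqref{eq_prime}--\eqref{eq_origin} to a fixed-point equation, use the rescaling $s=t\sigma$ (resp.\ $\tau=\varepsilon t\sigma$) to bound the cordial operators by a Beta integral, and then run the same induction for higher regularity via the mapping property of $\mathcal{K}_i$ on $C^j$. The only minor technical difference is that the paper obtains contractivity on a short subinterval $[0,b]$ with $b\{A+C+(K_1+K_2)B(1-\mu,\mu+\gamma)\}<1$ and then extends to $[0,T]$ by continuation, whereas you invoke the equivalent iterated-map argument to get a contraction directly on $C[0,T]$.
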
\label{theo_regularity}
\begin{proof}
Let $A=\max_{t\in[0, T]}  p_1(t)$, $C=\max_{t\in[0, T]}  q_1(t)$, $ K_1=\max_{D} K_1(t,s)$, $K_2=\max_{D} K_2(t, \tau)$, For $v \in X:=\left\{\omega: \omega \in C[0, b], \omega_0=y_0\right\}$. Let b be a positive number such that $0 \leq t \leq b$ and \\ $b\{A+C+(K_1+K_2){B\left(1-\mu,\mu+\gamma)\right.}\}<1$, where $B$ is the Beta function. The mapping $y=\delta(v)$ is defined as the following form:
	\begin{align}\label{eq_regularity_map}
		\notag y^{\prime}(t)=&p_1(t) v(t)+q_1(t) v(\varepsilon t)+g_1(t)\\
		\notag&+t^{-\gamma} \int_0^t(t-s)^{-\mu} s^{\mu+\gamma-1} K_1(t, s) v(s) d s\\
		&+(\varepsilon t)^{-\gamma} \int_0^{\varepsilon t}(\varepsilon t-\tau)^{-\mu} \tau^{\mu+\gamma-1} K_2(t, \tau) v(\tau) d \tau,\quad y(0)=y_0,\quad
		0<t \leq b.
	\end{align}
By making the variable transformation $\tau=\varepsilon s $ and $s=t \xi$, \eqref{eq_regularity_map} can be transformed into
	\begin{align}\label{eq_regularity_trans}
		\notag y^{\prime}(t)=& p_1(t) v(t)+q_1(t) v(\varepsilon t) + g_1(t)\\
		\notag &+\int_0^1(1-\xi)^{-\mu} \xi^{\mu+\gamma-1} K_1(t, t \xi) v(t \xi) d \xi \\
		& +\int_0^1(1-\xi)^{-\mu} \xi^{\mu+\gamma-1} K_2(t, \varepsilon t \xi) v(\varepsilon t \xi) d \xi.
	\end{align}
For the integral of the above equation over $[0, t]$, we can obtain
		\begin{align}\label{eq_regularity_integral}
		y(t)=&y_0 +\int_0^t\left(p_1(s) v(s)+q_1(s) v(\varepsilon s)+g_1(s)\right) d s \notag \\
		& +\int_0^t \int_0^1(1-\xi)^{-\mu} \xi^{\mu+\gamma-1} K_1(s, s \xi) v(s \xi) d \xi d s \notag \\
		& +\int_0^t \int_0^1(1-\xi)^{-\mu} \xi^{\mu+\gamma-1} K_2(s, \varepsilon s \xi) v(\varepsilon s \xi) d \xi d s.
	\end{align}
Through \eqref{eq_regularity_trans} and \eqref{eq_regularity_integral}, it is known that for $0<t \leq b$, we have
	\begin{align}
	& |y(t)| \leq\left|y_0\right|+b(A+C)\|v\|_{[0, b]}+b\left\|g_1\right\|_{[0, b]}+b\left(K_1+K_2\right) B(1-\mu, \mu+\gamma)\|v\|_{[0, b]}, \\
	& \left|y^{\prime}(t)\right| \leq(A+C)\|v\|_{[0, b]}+\left\|g_1\right\|_{[0, b]}+\left(K_1+K_2\right) B\left(1-\mu, \mu+\gamma\right)\|v\|_{[0, b]},
	\end{align}
Therefore, through the above derivation, it can be known that the mapping $\delta: X \rightarrow X \cap C^{1}[0, b]$. Let $y_1 = \delta\left(v_1\right), y_2 = \delta(v_2)$:
	\begin{align}
		\notag \left(y_1-y_2\right)(t)= & \int_0^t p_1(s)\left(v_1-v_2\right)(s) d s+\int_0^t q_1(s)\left(v_1-v_2\right)(\varepsilon s) d s \\
		\notag& +\int_0^t \int_0^1(1-\xi)^{-\mu} \xi^{\mu+\gamma-1} K_1(s, s \xi)\left(v_1-v_2\right)(s \xi) d \xi d s \\
		& +\int_0^t \int_0^1(1-\xi)^{-\mu} \xi^{\mu+\gamma-1}K_2(s, \varepsilon s \xi)\left(v_1-v_2\right)(\varepsilon s \xi)d \xi d s ,
	\end{align}
Because of $t \leq b$, take the absolute value of both sides,
	\begin{align}
	|\left(y_1-y_2\right) (t)| \leq b(A+C)\left\|v_1-v_2\right\|_{[0, b]}+b\left(K_1+K_2\right) 
	B(1-\mu,\mu+\gamma)\left\|v_1-v_2\right\|_{[0, b]},
  	\end{align}
that is:
	\begin{align}
		\left\|y_1-y_2\right\|_{[0, b]} \leq
		\left(b(A+C)+b\left(K_1+K_2\right) 
		B(1-\mu,\mu+\gamma)\right)\left\|v_1-v_2\right\|_{[0, b]},
	\end{align}
It can be developed that the mapping $\delta$ is a contraction mapping from $X$ to $X \cap C^{1}[0, b]$, that is, \eqref{eq_prime} have a unique solution $y \in C^{1}[0, b]$. Similarly, we can prove by induction that the equation \eqref{eq_prime} have a unique solution $y \in C^{1}[0, T]$.

Next, we consider the case where $p_1{(t)}, q_1{(t)}, g_1{(t)} \in C^m[0, T]$, $K_1(t, s), K_2(t, \tau) \in C^m(D)$, and assume that $y \in C^j[0, T]$, where $1 \leq j \leq m$. According to \cite{Ma毕业论文}, the operator $\left(\mathcal{K}_{i}y\right)(t)$ maps $C^j[0, T]$ to $C^j[0, T]$. Note that the right-hand side of \eqref{eq_prime} belongs to $C^j[0, T]$, so $y \in C^{j+1}[0, T]$. As previously proved, $y \in C^{1}[0, T]$ can be verified. By mathematical induction, we obtain $y \in C^{m+1}[0, T]$.
\end{proof}

\section{Preliminaries}\label{section_Preliminaries}
In the following text, $C$ represents a generic positive constant that is independent of $N$, the number of selected $Jacobi-Gauss$ points. $C$ only depends on the $T$ of the interval $[0, T]$ and the bounds of the given functions. Let $I:=[0,1]$.

\subsection{Function spaces}
Let $r\geq 0$ and $\kappa \in[0,1]$. Define a space denoted by $C^{r, \kappa}(I)$, in which all functions whose $r$-th derivatives are Hölder continuous with exponent $\kappa$. Its norm is defined as:
	\begin{align}
	\|f(\theta)\|_{r, \kappa}=\max _{0 \leq i \leq r} \max _{\theta \in I}\left|f^{(i)}(\theta)\right|+\max _{0 \leq i \leq r} \sup _{\theta, \eta \in I, \theta \neq \eta} \frac{\left|f^{(i)}(\theta)-f^{(i)}(\eta)\right|}{|\theta-\eta|^\kappa} ,f(\theta) \in C^{r, \kappa}(I) .\label{eq_C_rk}
	\end{align}
When $\kappa = 0$, $C^{r,0}(I)$ is called the space of functions with $r$ continous derivatives on I,which equals to the common space $C^{r}(I)$.\\
Define the $\lambda$-polynomial space \cite{Muntz}:
	$$
	P_n^\lambda\left(\mathbb{R}^{+}\right):=\operatorname{span}\left\{1, \theta^\lambda, \theta^{2 \lambda}, \ldots, \theta^{n \lambda}\right\},
	$$
where $\mathbb{R}^{+}=[0,+\infty), 0<\lambda \leq 1$. $n$-th $\lambda$-polynomials can be presented as 
	$$
	p_n^\lambda(\theta):=k_n \theta^{n \lambda}+k_{n-1} \theta^{(n-1) \lambda}+\cdots+k_1 \theta^\lambda+k_0, \quad k_n \neq 0, \theta \in \mathbb{R}^{+}.
	$$
The sequence of $\{p_n^\lambda\}_{n=0}^{\infty}$ is called to be orthogonal in $L_{\omega}^{2}(I)$ if
	$$
		\left(p_m^\lambda, p_n^\lambda\right)_\omega=\int_0^1 p_n^\lambda(\theta) p_m^\lambda(\theta) \omega(\theta) d \theta=\gamma_m \delta_{m,n},
	$$
where $\gamma_n=\left\|p_n^\lambda\right\|_{0, \omega}^2:=\left(p_n^\lambda, p_n^\lambda\right)_\omega \text { and } \delta_{m, n} \text { is the } Kronecker\text{ delta}.$\\
Then we define $P_n^\lambda(I)$ - space which satisfies
	$$
		P_n^\lambda(I):=\operatorname{span}\left\{p_0^\lambda, p_1^\lambda, \ldots, p_n^\lambda\right\},
	$$
This is a special type of Müntz space\cite{Ma毕业论文}, specifically, $P_n^1(I)$ represents the space formed by polynomials of degree no more than $n$.\\
Now we introduce the non-uniformly Jacobi-weighted Sobolev space\cite{Muntz}
	$$
	B_{\alpha, \beta}^{m,1}(I)=\left\{u(\theta): \partial_\theta^k u(\theta) \in L_{\omega^{\alpha+k, \beta+k}}^2(I), 0 \leq k \leq m\right\},m \in \mathbb{N}
	$$
endowed with the inner product, semi-norm and norm:
	$$
	\begin{aligned}
		& (u, v)_{B_{\alpha, \beta}^{m,1}}=\sum_{k=0}^m\left(\partial_\theta^k u, \partial_\theta^k v\right)_{\omega^{\alpha+k, \beta+k,1}},\\
		& |u|_{B_{\alpha, \beta}^{m,1}}=\left\|\partial_\theta^m u\right\|_{0,\omega^{\alpha+m, \beta+m,1}}, \quad\|u\|_{B_{\alpha, \beta}^{m,1}}=(u, u)_{B_{\alpha, \beta}^{m,1}}^{\frac{1}{2}} .
	\end{aligned}
	$$

\subsection{Fractional Jacobi polynomials}
The fractional $Jacobi$ polynomials are derived from the standard $Jacobi$ polynomials via a variable transformation, which effectively maps the interval $[-1,1]$ to $[0,1]$. The relationship between the two is as follows\cite{Muntz}
	$$
	J_n^{\alpha, \beta, \lambda}(\theta)=J_n^{\alpha, \beta}\left(2 \theta^\lambda-1\right), \quad \forall \theta \in I,\alpha, \beta>-1,0<\lambda \leq 1.
	$$
The form of the classical $Jacobi$ polynomials $J_n^{\alpha, \beta}(\theta)$ is 
	$$
	J_n^{\alpha, \beta}(\theta)=\frac{\Gamma(n+\alpha+1)}{n!\Gamma(n+\alpha+\beta+1)} \sum_{k=0}^n\binom{n}{k} \frac{\Gamma(n+k+\alpha+\beta+1)}{\Gamma(k+\alpha+1)}\left(\frac{\theta-1}{2}\right)^k .
	$$
So fractional $Jacobi$ polynomials can be readily derived,
	$$
	J_n^{\alpha, \beta, \lambda}(\theta)=\frac{\Gamma(n+\alpha+1)}{n!\Gamma(n+\alpha+\beta+1)} \sum_{k=0}^n\binom{n}{k} \frac{\Gamma(n+k+\alpha+\beta+1)}{\Gamma(k+\alpha+1)}\left(\theta^\lambda-1\right)^k .
	$$
The weight function is as follows:
\begin{align}\label{eq_wight_equation}
	\qquad\qquad\qquad\qquad\qquad
	\omega^{\alpha, \beta, \lambda}(\theta):=\lambda\left(1-\theta^\lambda\right)^\alpha \theta^{(\beta+1) \lambda-1}.
\end{align}
The relationship between the two sets of $Jacobi$-$Gauss$ quadrature nodes and their corresponding weights also exist. Let the standard $Jacobi$-$Gauss$  quadrature nodes be $\lbrace t_i \rbrace_{j=0}^{N}$, with weights $\lbrace w_j \rbrace_{j=0}^{N}$. $\lbrace \theta_{j},\omega_{j} \rbrace_{j=0}^{N}$ are denoted to be the fractional $Jacobi$-$Gauss$  quadrature nodes and weights on $I_0 := [0,1]$. We demonstrate the relationship:
	$$
		\theta_{j}=(\frac{t_j+1}{2})^{\frac{1}{\lambda}},\omega_{j}=2^{-(\alpha+\beta+1)}w_j,\quad j=0,...,N.
	$$
Note that $\left\{F_{j,\lambda}\left(\theta\right)\right\}_{j=0}^N$ represent the generalized Lagrange interpolation basis functions
	$$
	F_{j,\lambda}\left(\theta\right)=\prod_{i=0, i \neq j}^N \frac{\theta^\lambda-\theta_i^\lambda}{\theta_j^\lambda-\theta_i^\lambda}, \quad 0 \leq j \leq N,
	$$
where $\theta_0<\theta_1<\cdots<\theta_{N-1}<\theta_N$ are zeros of the fractional $Jacobi$ polynomials $J_{N+1}^{\alpha, \beta, \lambda}(\theta)$ and $F_{j,\lambda}\left(\theta\right)$ clearly satisfy
	$$
	F_{j,\lambda}\left(\theta_i\right)=\delta_{i j} .
	$$
We define the generalized interpolation operator $I_{N, \lambda}^{\alpha, \beta}$ as follows
	\begin{align}\label{eq_Interpolation_operator_sum}
		\qquad \qquad  I_{N, \lambda}^{\alpha, \beta} v(\theta)=\sum_{j=0}^N v\left(\theta_j\right) F_{j,\lambda}\left(\theta\right)=\sum_{j=0}^N v\left(z_j^{\frac{1}{\lambda}}\right) F_{j,1}\left(z\right)=I_{N, 1}^{\alpha, \beta}v(z^{\frac{1}{\lambda}}),\quad \theta=z^{\frac{1}{\lambda}}.
	\end{align}

\subsection{Elementary lemmas}
In this section, we will present the lemmas required for the convergence analysis in section \ref{section_Convergence Analysis}.

\begin{Lemma}\label{lemma_Gronwall}
	$($Gronwall inequality$)$ Assume that
	$$
	f(\theta) \leq g(\theta)+C \int_0^{\theta} f(\eta) d \eta, \quad 0 \leq \theta \leq 1,
	$$\\
	If $f(\theta), g(\theta)$ are non-negative integrable functions on $[0, 1]$ and $C > 0$, then there exists $L>0$ such that:
	$$
	f(\theta) \leq g(\theta)+L \int_0^{\theta} g(\eta) d \eta, \quad 0 \leq \theta \leq 1 .
	$$
\end{Lemma}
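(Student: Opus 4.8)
The plan is to prove the classical Gronwall inequality by iterating the integral inequality and then summing the resulting series, which is the standard Picard-type bootstrapping argument. First I would substitute the hypothesis into itself repeatedly: starting from $f(\theta) \leq g(\theta) + C\int_0^\theta f(\eta)\,d\eta$, replace the $f(\eta)$ inside the integral by the same bound to get $f(\theta) \leq g(\theta) + C\int_0^\theta g(\eta)\,d\eta + C^2\int_0^\theta\!\int_0^{\eta} f(\sigma)\,d\sigma\,d\eta$, and continue. After $n$ iterations one obtains $f(\theta) \leq \sum_{k=0}^{n-1} C^k \int_0^\theta \frac{(\theta-\eta)^{k-1}}{(k-1)!}\,g(\eta)\,d\eta$ (with the $k=0$ term interpreted as $g(\theta)$) plus a remainder term $R_n(\theta) = C^n \int_0^\theta \frac{(\theta-\eta)^{n-1}}{(n-1)!} f(\eta)\,d\eta$, where the nested integrals collapse to a single integral with the $(\theta-\eta)^{k-1}/(k-1)!$ factor by Dirichlet's formula for iterated integrals.

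Next I would estimate the remainder. Since $f$ is integrable on $[0,1]$, set $M = \int_0^1 f(\eta)\,d\eta < \infty$; then $|R_n(\theta)| \leq C^n \frac{1}{(n-1)!}\int_0^\theta f(\eta)\,d\eta \leq \frac{C^n M}{(n-1)!} \to 0$ as $n\to\infty$ uniformly in $\theta\in[0,1]$, because we are on a bounded interval so $(\theta-\eta)^{n-1}\le 1$. Passing to the limit gives $f(\theta) \leq g(\theta) + \sum_{k=1}^{\infty} C^k \int_0^\theta \frac{(\theta-\eta)^{k-1}}{(k-1)!}\,g(\eta)\,d\eta = g(\theta) + C\int_0^\theta e^{C(\theta-\eta)} g(\eta)\,d\eta$. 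Since $\theta - \eta \le 1$ on the relevant range, $e^{C(\theta-\eta)} \le e^{C}$, so one may take $L = C e^{C}$, yielding $f(\theta) \leq g(\theta) + L\int_0^\theta g(\eta)\,d\eta$ as claimed.

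The main obstacle, and the only genuinely technical point, is justifying the collapse of the $k$-fold iterated integral $\int_0^\theta\!\int_0^{\eta_1}\!\cdots\int_0^{\eta_{k-1}} g(\eta_k)\,d\eta_k\cdots d\eta_1$ into the single integral $\frac{1}{(k-1)!}\int_0^\theta (\theta-\eta)^{k-1} g(\eta)\,d\eta$; this is a Fubini/Tonelli rearrangement over the simplex $\{0\le \eta_k \le \eta_{k-1}\le\cdots\le\eta_1\le\theta\}$, valid since $g$ (and $f$) are non-negative and integrable, and it can be proved by induction on $k$. Everything else—the telescoping of the iteration, the vanishing of the remainder via the factorial growth, and the bound $e^{C(\theta-\eta)}\le e^C$ on $[0,1]$—is routine. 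A cleaner alternative, if one prefers to avoid the iterated-integral bookkeeping, is to define $G(\theta) = e^{-C\theta}\int_0^\theta f(\eta)\,d\eta$ and observe (after first upgrading $f$ to satisfy the bound pointwise) that $G$ has non-positive derivative in the a.e./distributional sense against the perturbation $g$, but the series approach is more elementary and self-contained, so that is what I would write up.
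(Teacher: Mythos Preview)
The paper states this lemma without proof; it is quoted as a classical result and no argument is given. Your Picard-iteration proof is correct and standard: the collapse of the iterated integrals via Cauchy's formula, the factorial decay of the remainder using only $\int_0^1 f<\infty$, and the summation to $C\int_0^\theta e^{C(\theta-\eta)}g(\eta)\,d\eta$ (interchange justified by Tonelli since $g\ge 0$) all go through, and the bound $e^{C(\theta-\eta)}\le e^C$ on $[0,1]$ gives the explicit constant $L=Ce^{C}$.
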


\begin{Lemma}\label{lemma_Chen_Lemma3.5_norm}$($see $\cite{spectral_Chen_WVIDEs_delay}$$)$
If $E(\theta)$ is a nonnegative integrable function which satisfies
	$$
	\begin{aligned}
		E(\theta)\leq J(\theta)+L\int_{0}^{\theta}E(\eta)d\eta,0\leq\theta\leq1,\notag
	\end{aligned}
	$$
where $J(\theta)$ is a integrable function and $L$ is a positive constant. Then the following conclusion holds
	$$
	\begin{aligned}
		&\left\|E(\theta)\right\|_{\infty}\leq C\left\|J(\theta)\right\|_{\infty} ,\notag\\
		&\left\|E(\theta)\right\|_{0,\omega^{\alpha,\beta,1}}\leq C\left\|J(\theta)\right\|_{0,\omega^{\alpha,\beta,1}}.\notag
	\end{aligned}
	$$
\end{Lemma}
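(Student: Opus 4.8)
The plan is to derive both bounds from the scalar Gronwall inequality in Lemma~\ref{lemma_Gronwall}. Since $E\ge 0$, the hypothesis implies $E(\theta)\le |J(\theta)|+L\int_0^{\theta}E(\eta)\,d\eta$, so Lemma~\ref{lemma_Gronwall} applies with $g=|J|$ and produces a constant $\widetilde L>0$ for which
$$
E(\theta)\le |J(\theta)|+\widetilde L\int_0^{\theta}|J(\eta)|\,d\eta,\qquad 0\le\theta\le 1 .
$$
Both claimed estimates then follow by bounding the two terms on the right-hand side in the respective norms.

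The $L^\infty$ estimate is immediate: for each $\theta\in[0,1]$ we have $\int_0^{\theta}|J(\eta)|\,d\eta\le\|J\|_\infty$, hence $E(\theta)\le(1+\widetilde L)\|J\|_\infty$ and, taking the supremum over $[0,1]$, $\|E\|_\infty\le C\|J\|_\infty$ with $C=1+\widetilde L$. For the weighted estimate I would use the triangle inequality in $L^2_{\omega^{\alpha,\beta,1}}(I)$ to get
$$
\|E\|_{0,\omega^{\alpha,\beta,1}}\le\|J\|_{0,\omega^{\alpha,\beta,1}}+\widetilde L\left\|\int_0^{\theta}|J(\eta)|\,d\eta\right\|_{0,\omega^{\alpha,\beta,1}},
$$
so the whole matter reduces to the boundedness of the antiderivative operator $v\mapsto\int_0^{\theta}v(\eta)\,d\eta$ on $L^2_{\omega^{\alpha,\beta,1}}(I)$. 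This I would prove by a weighted Hardy argument: Cauchy--Schwarz gives
$$
\left(\int_0^{\theta}|J(\eta)|\,d\eta\right)^2\le\left(\int_0^{\theta}\bigl(\omega^{\alpha,\beta,1}(\eta)\bigr)^{-1}d\eta\right)\left(\int_0^{\theta}|J(\eta)|^2\,\omega^{\alpha,\beta,1}(\eta)\,d\eta\right)\le\Psi(\theta)\,\|J\|_{0,\omega^{\alpha,\beta,1}}^2,
$$
where $\Psi(\theta):=\int_0^{\theta}\bigl(\omega^{\alpha,\beta,1}(\eta)\bigr)^{-1}d\eta$; multiplying by $\omega^{\alpha,\beta,1}(\theta)$ and integrating over $I$, the desired bound follows once $\int_0^1\Psi(\theta)\,\omega^{\alpha,\beta,1}(\theta)\,d\theta<\infty$, which is an elementary Beta-type computation in the admissible parameter range.

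The step I expect to be the main obstacle is precisely this last one — the weighted Hardy inequality $\|\int_0^{\theta}v\,\|_{0,\omega^{\alpha,\beta,1}}\le C\|v\|_{0,\omega^{\alpha,\beta,1}}$ — together with the verification that the Jacobi parameters $(\alpha,\beta)$ used in the collocation scheme actually lie in the range where it holds (roughly $\beta<1$, so that $(\omega^{\alpha,\beta,1})^{-1}$ is integrable at the origin); outside that range one would instead invoke the sharp Muckenhoupt-type criterion $\sup_{0<r<1}\bigl(\int_r^1\omega^{\alpha,\beta,1}\bigr)^{1/2}\bigl(\int_0^r(\omega^{\alpha,\beta,1})^{-1}\bigr)^{1/2}<\infty$. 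Everything else is routine bookkeeping, and since the statement is quoted from \cite{spectral_Chen_WVIDEs_delay} one could equally well simply refer to that reference for the details.
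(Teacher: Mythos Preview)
The paper does not prove this lemma; it is simply quoted from \cite{spectral_Chen_WVIDEs_delay} without argument, so there is no ``paper's own proof'' to compare against. Your route --- apply Lemma~\ref{lemma_Gronwall} with $g=|J|$ to obtain $E(\theta)\le |J(\theta)|+\widetilde L\int_0^\theta |J(\eta)|\,d\eta$, then bound each term in the respective norm --- is correct and is the standard way such results are derived. For the weighted $L^2$ step, note that the paper already records the needed tool as Lemma~\ref{lemma_Hardy} (the generalized Hardy inequality): the Muckenhoupt-type supremum condition you write down is exactly the criterion stated there, with $p=q=2$, $x=y=\omega^{\alpha,\beta,1}$, and kernel $\rho\equiv 1$. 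In the range $-1<\alpha,\beta\le -\tfrac12$ actually used in the convergence theorems your cruder Cauchy--Schwarz argument also goes through, since then $(\omega^{\alpha,\beta,1})^{-1}(\eta)=(1-\eta)^{-\alpha}\eta^{-\beta}$ is integrable on $(0,1)$ and $\Psi$ is bounded; but citing Lemma~\ref{lemma_Hardy} is cleaner and avoids any parameter-range discussion.
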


\begin{Lemma}\label{lemma_kappa_norm}$($ see $\cite{Lemma_linear_operator_1}$,$\cite{Lemma_linear_operator_2}$$)$
	If $r$ is a nonnegative integer and real number $\kappa \in(0,1)$, there exists a linear operator $\mathcal{T}_N$ that maps $C^{r, \kappa}(I)$ to $P_N^1(I)$, such that
	$$
	\left\|v-\mathcal{T}_N v\right\|_{\infty} \leq C_{r, \kappa} N^{-(r+\kappa)}\|v\|_{r, \kappa}, \quad v \in C^{r, \kappa}(I),
	$$
	$C_{r, \kappa}$ is a constant that may depend on $r$ and $\kappa$. For the linear weakly singular integral operators $\mathcal{K}_i$ $($where $i = 1,2$$)$ defined in the previous section:
	\begin{align}
		&(\mathcal{K}_1 v)(\theta)=\theta^{-\gamma}\int_0^{\theta}(\theta-\eta)^{-\mu}\eta^{\mu+\gamma-1} K_1(\theta, \eta) v(\eta) d \eta,\\
		&(\mathcal{K}_2 v)(\theta)=(\varepsilon \theta)^{-\gamma}\int_0^{\varepsilon \theta}(\varepsilon \theta-\eta)^{-\mu}\eta^{\mu+\gamma-1} K_2(\theta, \eta) v(\eta) d \eta.\label{eq_Operator}
	\end{align}
	$K_i$ belongs to $C(I \times I)$ and $K_i(\theta, \theta) \neq 0$ for all $\theta \in I$. For any $0<\kappa<1-\mu$, we will prove that $\mathcal{K}_i$ are linear operators mapping from $C(I)$ to $C^{0, \kappa}(I)$.
\end{Lemma}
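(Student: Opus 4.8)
The plan is to pass to a ``cordial'' form of $\mathcal{K}_i$ in which the algebraic prefactor $\theta^{-\gamma}$ (resp.\ $(\varepsilon\theta)^{-\gamma}$) disappears; to read off boundedness and continuity on $C(I)$ from the integrability of the remaining weight; and then to obtain the Hölder estimate by splitting the difference of the integrands into a kernel part, handled through the smoothness of $K_i$, and a part carrying $v$, handled either through the regularity of $v$ or through the order-$(1-\mu)$ smoothing of the Abel kernel. I give the details for $\mathcal{K}_1$; the operator $\mathcal{K}_2$ is treated identically after replacing $\theta$ by $\varepsilon\theta$ in the arguments of $K_2$ and $v$.

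First I would substitute $\eta=\theta\xi$ in \eqref{eq_Operator}; using $(\theta-\eta)^{-\mu}=\theta^{-\mu}(1-\xi)^{-\mu}$ and $\eta^{\mu+\gamma-1}=\theta^{\mu+\gamma-1}\xi^{\mu+\gamma-1}$ all powers of $\theta$ cancel, and
\[
(\mathcal{K}_1 v)(\theta)=\int_0^1\varpi(\xi)\,K_1(\theta,\theta\xi)\,v(\theta\xi)\,d\xi,\qquad \varpi(\xi):=(1-\xi)^{-\mu}\,\xi^{\mu+\gamma-1}.
\]
Since $0\le\mu<1$ and $\mu+\gamma\ge1$ one has $\varpi\in L^1(I)$ with $\int_0^1\varpi(\xi)\,d\xi=B(1-\mu,\mu+\gamma)$, so $\|\mathcal{K}_1 v\|_\infty\le B(1-\mu,\mu+\gamma)\,\|K_1\|_{C(I\times I)}\,\|v\|_\infty$; and since $(\theta,\xi)\mapsto K_1(\theta,\theta\xi)v(\theta\xi)$ is continuous on the compact square $I\times I$, dominated convergence gives $\mathcal{K}_1 v\in C(I)$. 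This already yields the boundedness and well-posedness part of the claim.

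For the Hölder bound, fix $\theta_1<\theta_2$ in $I$ and write $(\mathcal{K}_1 v)(\theta_2)-(\mathcal{K}_1 v)(\theta_1)=\int_0^1\varpi(\xi)\bigl[G(\theta_2,\xi)-G(\theta_1,\xi)\bigr]\,d\xi$ with $G(\theta,\xi):=K_1(\theta,\theta\xi)\,v(\theta\xi)$, and split $G(\theta_2,\xi)-G(\theta_1,\xi)=\bigl[K_1(\theta_2,\theta_2\xi)-K_1(\theta_1,\theta_1\xi)\bigr]v(\theta_2\xi)+K_1(\theta_1,\theta_1\xi)\bigl[v(\theta_2\xi)-v(\theta_1\xi)\bigr]$. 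The first summand is $O(|\theta_2-\theta_1|)$ because $K_1$ is smooth, hence Lipschitz on $I\times I$, and since $\kappa<1-\mu\le1$ and $|\theta_2-\theta_1|\le1$ this is $O(|\theta_2-\theta_1|^\kappa)$. For the second summand, when $v\in C^{0,\kappa}(I)$ — the case relevant to the method, the solution being smooth by the regularity theorem — one has $|v(\theta_2\xi)-v(\theta_1\xi)|\le\|v\|_{0,\kappa}\,\xi^\kappa|\theta_2-\theta_1|^\kappa\le\|v\|_{0,\kappa}|\theta_2-\theta_1|^\kappa$, and integration against $\varpi$ closes the estimate, giving $\|\mathcal{K}_1 v\|_{0,\kappa}\le C\|v\|_{0,\kappa}$. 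For merely continuous $v$ the crude bound $\omega_v(|\theta_2-\theta_1|)$ does not suffice and the gain must be extracted from the Abel kernel in the unscaled form \eqref{eq_Operator}: after splitting $\int_0^{\theta_2}=\int_0^{\theta_1}+\int_{\theta_1}^{\theta_2}$, the near-diagonal piece is $O(|\theta_2-\theta_1|^{1-\mu})$, and by the elementary inequality $a^{1-\mu}-b^{1-\mu}\le(a-b)^{1-\mu}$ for $a>b\ge0$ so is the kernel-difference piece $\int_0^{\theta_1}\bigl[(\theta_2-\eta)^{-\mu}-(\theta_1-\eta)^{-\mu}\bigr]\eta^{\mu+\gamma-1}K_1(\theta_2,\eta)v(\eta)\,d\eta$.

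The step I expect to be the genuine obstacle is the control near $\theta=0$. The scaling used above removes $\theta^{-\gamma}$ but also hides the smoothing, since in the cordial form the whole $\theta$-dependence sits inside $G(\theta,\cdot)$; the gain of regularity $1-\mu$ is visible only in the unscaled representation \eqref{eq_Operator}, where one must then simultaneously carry along the singular prefactor $\theta^{-\gamma}$ and the weight $\eta^{\mu+\gamma-1}$. Keeping these three objects under control at the origin — that is, verifying that the cancellation which already makes $\mathcal{K}_1 v$ bounded and continuous at $0$ upgrades to a Hölder bound, using the smoothness of $K_i$ together with whatever regularity $v$ carries — is the delicate part, and it is precisely there that the hypothesis $\mu+\gamma\ge1$ (not merely $\mu<1$) is used.
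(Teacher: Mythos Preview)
Your cordial substitution $\eta=\theta\xi$ is exactly the right way to see boundedness and continuity, and your treatment of the $K$-difference piece is fine. The gap is in the $v$-difference piece. In the cordial form your argument yields only $\|\mathcal{K}_1 v\|_{0,\kappa}\le C\|v\|_{0,\kappa}$, i.e.\ it needs $v\in C^{0,\kappa}(I)$; but the lemma (and the way it is invoked later, where it is applied to the error $e=\varphi-\varphi_N$) requires the stronger bound $\|\mathcal{K}_i v\|_{0,\kappa}\le C\|v\|_\infty$ for merely continuous $v$. Your fallback to the unscaled representation is the right instinct, but the sketch you give drops the prefactor $\theta^{-\gamma}$: the ``kernel-difference piece'' should read
\[
\int_0^{\theta_1}\Bigl[\theta_2^{-\gamma}(\theta_2-\eta)^{-\mu}-\theta_1^{-\gamma}(\theta_1-\eta)^{-\mu}\Bigr]\eta^{\mu+\gamma-1}K_1(\theta_2,\eta)\,v(\eta)\,d\eta,
\]
and with the $\theta_j^{-\gamma}$ factors present neither this piece nor the near-diagonal piece is individually $O(|\theta_2-\theta_1|^{1-\mu})$ uniformly down to $\theta=0$; the elementary inequality $a^{1-\mu}-b^{1-\mu}\le(a-b)^{1-\mu}$ is no longer enough. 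You identify this correctly in your last paragraph, but do not resolve it.

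The paper does not pass to the cordial form. It works throughout in the unscaled representation and uses a three-term decomposition $M_1^{(1)}+M_1^{(2)}+M_2$ where $M_2$ is the near-diagonal piece, $M_1^{(2)}$ carries the Hölder difference $K_i(\theta_2,\eta)-K_i(\theta_1,\eta)$, and $M_1^{(1)}$ carries the difference of the weights $\theta_j^{-\gamma}(\theta_j-\eta)^{-\mu}\eta^{\mu+\gamma-1}$ with $K_i(\theta_2,\eta)$ frozen. The trick you are missing sits in $M_1^{(1)}$: after bounding $|K_i|$ by its maximum one exploits that
\[
\theta^{-\gamma}\int_0^{\theta}(\theta-\eta)^{-\mu}\eta^{\mu+\gamma-1}\,d\eta \;=\; B(1-\mu,\mu+\gamma)
\]
is \emph{independent of $\theta$}. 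Hence the difference of the two full-range weight integrals vanishes identically, and $M_1^{(1)}$ collapses to a second near-diagonal integral over $[\theta_2,\theta_1]$, which is then estimated like $M_2$. This constancy is of course exactly the cordial identity $\int_0^1\varpi(\xi)\,d\xi=B(1-\mu,\mu+\gamma)$ from your first step, but read in the unscaled variables it is what makes the prefactor $\theta^{-\gamma}$ harmless in the Hölder estimate --- the point you flagged as the genuine obstacle.
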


\begin{Lemma}\label{lemma_J6org}
When $0<\kappa<1-\mu$, for any function $v \in C(I)$ and $K_i \in C(I \times I)$ with $K_i(\cdot, \eta) \in C^{0, \kappa}(I),(i=1,2)$, there is
	$$
	\begin{aligned}
		\notag \frac{|(\mathcal{K}_i v)(\theta_1)-(\mathcal{K}_i v)(\theta_2)|}{|\theta_1-\theta_2|^{\kappa}} \leq C \max _{\theta_1 \in I}|v(\theta_1)|, \quad \forall \theta_1 ,\theta_2 \in I, \theta_1 \neq \theta_2 .
	\end{aligned}
	$$
	Thus it can be inferred that
	$$
	\begin{aligned}
		\|\mathcal{K}_i v\|_{0, \kappa} \leq C\|v\|_{\infty}, \quad 0<\kappa<1-\mu .\notag
	\end{aligned}
	$$
\begin{proof}
For the sake of discussion, we will only prove the case of $\mathcal{K}_2 v$, as the case of $\mathcal{K}_1 v$ is analogous. Without loss of generality, we can assume that $0 \leq \theta_2 < \theta_1 \leq 1$. Let $K_2$ denote the maximum value over $\eta \in I$. We define $C$ as a constant and denote the Beta function as $B(\cdot, \cdot)$. The same symbol will be used in the subsequent numerical experiments.
	\begin{align}
		&\frac{\left|\left(\mathcal{K}_2 v\right)(\theta_1)-\left(\mathcal{K}_2 v\right)(\theta_2)\right|}{|\theta_1-\theta_2|^{\kappa}}\notag\\
		&=\varepsilon^{-\gamma}\frac{\left|\int_0^{\varepsilon\theta_2} \frac{(\varepsilon \theta_2-\eta)^{-\mu}}{\theta_2^{\gamma}} \eta^{\mu+\gamma-1} K_2(\theta_2, \eta) v(\eta) \, d\eta \right.
			-\left.\int_0^{\varepsilon\theta_1} \frac{(\varepsilon \theta_1-\eta)^{-\mu}}{\theta_1^{\gamma}} \eta^{\mu+\gamma-1} K_2(\theta_1, \eta) v(\eta) \, d\eta\right| }{|\theta_1-\theta_2|^{\kappa}}\notag \\
		&\leq\|v\|_{\infty}|\theta_1-\theta_2|^{-\kappa}\varepsilon^{-\gamma}\left| \int_0^{\varepsilon\theta_2}\frac{(\varepsilon \theta_2-\eta)^{-\mu}}{\theta_2^{\gamma}} \eta^{\mu+\gamma-1} K_2(\theta_2, \eta) \right.-\left.\frac{(\varepsilon \theta_1-\eta)^{-\mu}}{\theta_1^{\gamma}} \eta^{\mu+\gamma-1} K_2(\theta_1, \eta)\, d\eta\right|  \notag \\ 
		&+\|v\|_{\infty}|\theta_1-\theta_2|^{-\kappa}\varepsilon^{-\gamma}\int_{\varepsilon\theta_2}^{\varepsilon\theta_1} \frac{(\varepsilon \theta_1-\eta)^{-\mu}}{\theta_1^{\gamma}} \eta^{\mu+\gamma-1}\left|K_2(\theta_1, \eta)\right| \, d\eta \notag \\
		&=\varepsilon^{-\gamma}(M_1+M_2) \notag \\
		&\leq \varepsilon^{-\gamma}(M_1^{(1)}+M_2^{(1)}+M_2).
	\end{align}
where
	\begin{align}
		&M_1^{(1)}=\|v\|_{\infty}\left(\theta_1-\theta_2\right)^{-\kappa}
		\left| 
		\int_0^{\varepsilon\theta_2}\left(\frac{(\varepsilon \theta_2-\eta)^{-\mu}}{\theta_2^{\gamma}} \eta^{\mu+\gamma-1}\right.
		-\left.\frac{(\varepsilon \theta_1-\eta)^{-\mu}}{\theta_1^{\gamma}} \eta^{\mu+\gamma-1}\right) K_2(\theta_2, \eta)\, d\eta
		\right|,\notag\\
		&M_1^{(2)}=\|v\|_{\infty}\left(\theta_1-\theta_2\right)^{-\kappa}
		\int_0^{\varepsilon\theta_2}
		\frac{\left(\varepsilon \theta_1-\eta\right)^{-\mu}}{\theta_1^{\gamma}}\eta^{\mu+\gamma-1}
		\left|
		K_2\left(\theta_2,\eta\right)-K_2\left(\theta_1,\eta\right)
		\right|
		d\eta,\notag\\
		&M_{2}=\|v\|_{\infty}\left(\theta_1-\theta_2\right)^{-\kappa}\int_{\varepsilon\theta_2}^{\varepsilon\theta_1}\frac{\left(\varepsilon \theta_1-\eta\right)^{-\mu}}{\theta_1^{\gamma}} \eta^{\mu+\gamma-1}\left|K_2\left(\theta_1, \eta\right)\right| d \eta\notag.
	\end{align}
	Next, we analyze $M_1^{(1)}, M_1^{(2)}, M_{2}$ individually, that is
	\begin{align}
		M_1^{(1)} 
		&\leq K_2\|v\|_{\infty}\left(\theta_1-\theta_2\right)^{-\kappa}\left| \int_{0}^{\varepsilon\theta_2}\frac{\left(\varepsilon \theta_2-\eta\right)^{-\mu}}{\theta_2^{\gamma}} \eta^{\mu+\gamma-1} \, d\eta \right. -\left. \int_{0}^{\varepsilon\theta_1}\frac{\left(\varepsilon \theta_1-\eta\right)^{-\mu}}{\theta_1^{\gamma}} \eta^{\mu+\gamma-1} \, d\eta \right. \notag\\
		&\quad + \left. \int_{\varepsilon\theta_2}^{\varepsilon\theta_1}\frac{\left(\varepsilon \theta_1-\eta\right)^{-\mu}}{\theta_1^{\gamma}} \eta^{\mu+\gamma-1} \, d\eta \right| \notag\\
		&\leq C\|v\|_{\infty}\left(\theta_1-\theta_2\right)^{-\kappa}\int_{\varepsilon\theta_2}^{\varepsilon\theta_1}{\left(\varepsilon \theta_1-\eta\right)^{-\mu}}\frac{1}{\theta_1^{\gamma}}\eta^{\mu+\gamma-1} \, d\eta \notag\\
		&\leq C\|v\|_{\infty}\left(\theta_1-\theta_2\right)^{-\kappa}\varepsilon^{\gamma}\left(\theta_1-\theta_2\right)^{1-\mu}
		\left.\int_{0}^{1}\left(1-\xi\right)^{-\mu}\frac{1}{\theta_1^{\gamma}}\left(\theta_2+\left(\theta_1-\theta_2\right)\xi\right)^{\mu+\gamma-1}\, d \xi\right.\notag
	\end{align}
	\begin{align}
		&\qquad\leq C\|v\|_{\infty}\left(\theta_1-\theta_2\right)^{-\kappa}\varepsilon^{\gamma}\left(\theta_1-\theta_2\right)^{1-\mu}
		\left.\int_{0}^{1}\left(1-\xi\right)^{-\mu}\frac{1}{\theta_1^{\gamma}}\theta_1^{\mu+\gamma-1}\, d \xi\right.\notag\\
		&\qquad\leq C\|v\|_{\infty}\left(\theta_1-\theta_2\right)^{1-\mu-\kappa}\varepsilon^{\gamma}
		\left.\int_{0}^{1}\left(1-\xi\right)^{-\mu}\frac{1}{\theta_1^{\mu+\gamma-1}}\theta_1^{\mu+\gamma-1}d \xi\right.\notag\\
		&\qquad\leq C\|v\|_{\infty}\varepsilon^{\gamma}B(1-\mu,1)\notag\\
		&\qquad\leq C\varepsilon^{\gamma}\|v\|_{\infty},\label{eq_M1(1)}
	\end{align}
where $\mu+\gamma-1 \leq \gamma$. When \eqref{eq_C_rk} satisfies $r=0$, it can be inferred similarly
	\begin{align}
		M_1^{(2)}=&\|v\|_{\infty}
		\int_0^{\varepsilon\theta_2}
		\frac{\left(\varepsilon \theta_1-\eta\right)^{-\mu}}{\theta_1^{\gamma}}\eta^{\mu+\gamma-1}
		\frac{\left|
			K_2\left(\theta_2,\eta\right)-K_2\left(\theta_1,\eta\right)
			\right|}{\left(\theta_1-\theta_2\right)^{\kappa}}
		d\eta\notag\\
		&\leq \underset{\eta\in I}{\max}\|K_2(\cdot,\eta)\|_{0,\kappa}\|v\|_{\infty}
		\int_0^{\varepsilon\theta_1}
		\frac{\left(\varepsilon \theta_1-\eta\right)^{-\mu}}{\theta_1^{\gamma}}\eta^{\mu+\gamma-1}
		d\eta\notag\\
		&\leq \underset{\eta\in I}{\max}\|K_2(\cdot,\eta)\|_{0,\kappa}\|v\|_{\infty}\varepsilon^{\gamma}B(1-\mu,\mu+\gamma)\notag\\
		&\leq C\varepsilon^{\gamma}\|v\|_{\infty}.\label{eq_M1(2)} 
	\end{align}
It follows from the proof process for $M_1^{(1)}$
	\begin{align}
		M_2&\leq C\varepsilon^{\gamma}\|v\|_{\infty}B(1-\mu,\mu)\notag\\
		&\leq C\varepsilon^{\gamma}\|v\|_{\infty}.\label{eq_M2}
	\end{align}
It is clear from \eqref{eq_M1(1)}-\eqref{eq_M2} that \eqref{eq_C_rk} holds, and it is obvious that $\mathcal{K}_i$ are linear operators. Then we complete the proof.
\end{proof}
\end{Lemma}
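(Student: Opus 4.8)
The plan is to first establish the H\"older-seminorm estimate $|(\mathcal{K}_i v)(\theta_1)-(\mathcal{K}_i v)(\theta_2)|\le C\|v\|_\infty|\theta_1-\theta_2|^\kappa$ and then add an $L^\infty$ bound to conclude $\|\mathcal{K}_i v\|_{0,\kappa}\le C\|v\|_\infty$, hence $\mathcal{K}_i\colon C(I)\to C^{0,\kappa}(I)$. As in the author's convention it is enough to treat $\mathcal{K}_2$, since $\mathcal{K}_1$ is the case $\varepsilon=1$, and we may assume $0\le\theta_2<\theta_1\le1$. First I would record that the substitution $\eta=\varepsilon\theta u$ kills the singular prefactor, $(\mathcal{K}_2 v)(\theta)=\int_0^1(1-u)^{-\mu}u^{\mu+\gamma-1}K_2(\theta,\varepsilon\theta u)v(\varepsilon\theta u)\,du$, where $u^{\mu+\gamma-1}$ is integrable exactly because $\mu+\gamma\ge1$; this gives at once $\|\mathcal{K}_2 v\|_\infty\le\|K_2\|_\infty B(1-\mu,\mu+\gamma)\|v\|_\infty$, and along the way the identity $\int_0^{\varepsilon\theta}(\varepsilon\theta-\eta)^{-\mu}\eta^{\mu+\gamma-1}\,d\eta=(\varepsilon\theta)^\gamma B(1-\mu,\mu+\gamma)$, which is the main computational tool below.

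For the seminorm I would split $(\mathcal{K}_2 v)(\theta_1)-(\mathcal{K}_2 v)(\theta_2)$ into an integral over the common window $[0,\varepsilon\theta_2]$ plus a tail over $[\varepsilon\theta_2,\varepsilon\theta_1]$, and inside the common window further peel off the variation of $K_2$ in its first slot. This produces three contributions: $M_1^{(1)}$, where only the singular factor $(\varepsilon\theta-\eta)^{-\mu}$ and its matching prefactor $\theta^{-\gamma}$ change while $K_2(\theta_2,\cdot)$ is frozen; $M_1^{(2)}$, built on $K_2(\theta_2,\cdot)-K_2(\theta_1,\cdot)$; and the tail $M_2$. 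In $M_1^{(2)}$ the hypothesis $K_2(\cdot,\eta)\in C^{0,\kappa}(I)$ supplies the factor $|\theta_1-\theta_2|^\kappa$, and matching it against $\int_0^{\varepsilon\theta_1}\frac{(\varepsilon\theta_1-\eta)^{-\mu}}{\theta_1^\gamma}\eta^{\mu+\gamma-1}\,d\eta=\varepsilon^\gamma B(1-\mu,\mu+\gamma)$ leaves a bound $C\|v\|_\infty|\theta_1-\theta_2|^\kappa$. For $M_2$ one uses $|K_2(\theta_1,\cdot)|\le\|K_2\|_\infty$ and the change of variables $\eta=\varepsilon\theta_2+\varepsilon(\theta_1-\theta_2)\xi$ to write the remaining integral as $\varepsilon^\gamma\theta_1^{-\gamma}(\theta_1-\theta_2)^{1-\mu}\int_0^1(1-\xi)^{-\mu}(\theta_2+(\theta_1-\theta_2)\xi)^{\mu+\gamma-1}\,d\xi$, which carries a gained power $(\theta_1-\theta_2)^{1-\mu}$; since $\kappa<1-\mu$ this beats $|\theta_1-\theta_2|^\kappa$.

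The hard part will be $M_1^{(1)}$. The naive move --- estimating $\bigl|\tfrac{(\varepsilon\theta_2-\eta)^{-\mu}}{\theta_2^\gamma}-\tfrac{(\varepsilon\theta_1-\eta)^{-\mu}}{\theta_1^\gamma}\bigr|$ by separately bounding the difference of the two singular kernels and the difference of $\theta_2^{-\gamma}$ and $\theta_1^{-\gamma}$ --- is the wrong move: detaching $\theta^{-\gamma}$ from the singular kernel it belongs to produces pieces that are genuinely unbounded as $\theta_1,\theta_2\to0$. One must instead keep each $\theta_j^{-\gamma}$ married to its own $(\varepsilon\theta_j-\eta)^{-\mu}$: on $(0,\varepsilon\theta_2)$ the integrand difference $\tfrac{(\varepsilon\theta_2-\eta)^{-\mu}}{\theta_2^\gamma}\eta^{\mu+\gamma-1}-\tfrac{(\varepsilon\theta_1-\eta)^{-\mu}}{\theta_1^\gamma}\eta^{\mu+\gamma-1}$ is nonnegative, so bounding $|K_2(\theta_2,\cdot)|\le\|K_2\|_\infty$, then adding and subtracting $\int_0^{\varepsilon\theta_1}$ and applying the identity to each of the two full integrals, collapses $M_1^{(1)}$ exactly onto the same tail integral $\int_{\varepsilon\theta_2}^{\varepsilon\theta_1}\frac{(\varepsilon\theta_1-\eta)^{-\mu}}{\theta_1^\gamma}\eta^{\mu+\gamma-1}\,d\eta$ already handled in $M_2$. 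Assembling the three bounds, dividing by $|\theta_1-\theta_2|^\kappa$, and using $\kappa<1-\mu$ throughout yields the claimed inequality, and together with the $L^\infty$ bound the norm estimate and the mapping property. The one delicate point to verify in the write-up is that in these tail integrals the powers of $\theta_1$ coming from $\theta_1^{-\gamma}$ and from $\eta^{\mu+\gamma-1}$ recombine with the gained factor $(\theta_1-\theta_2)^{1-\mu}$ so as to leave a nonnegative net exponent of $(\theta_1-\theta_2)$ and no residual negative power of $\theta_1$; this is where the constraint $\mu+\gamma\ge1$, and the freedom $\kappa<1-\mu$, must be spent carefully.
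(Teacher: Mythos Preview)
Your proposal follows essentially the same route as the paper's proof: the identical three-term decomposition $M_1^{(1)}+M_1^{(2)}+M_2$, the same treatment of $M_1^{(2)}$ via the H\"older hypothesis on $K_2(\cdot,\eta)$, the same substitution $\eta=\varepsilon\theta_2+\varepsilon(\theta_1-\theta_2)\xi$ for the tail $M_2$, and in particular the same key cancellation trick for $M_1^{(1)}$ (complete both integrals to full beta integrals $\varepsilon^\gamma B(1-\mu,\mu+\gamma)$, which cancel, leaving only the tail). Your explicit remark that the kernel difference is nonnegative on $(0,\varepsilon\theta_2)$, and your separate $L^\infty$ bound via the scaling $\eta=\varepsilon\theta u$, are clarifications the paper leaves implicit, but the argument is the same.
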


\begin{Lemma}\label{lemma_J6}
While $0<\kappa<1-\mu$, for any $v(\theta) \in C(I), K_i\in C(I \times I)$, and $K_i(\cdot, \eta) \in C^{0, \kappa}(I),i=1,2$, there exists a constant $C$ such that
	\begin{align}\label{eq_J6_1}
		\frac{\left|(\mathcal{K}_i v)\left(\theta_1^{\frac{1}{\lambda}}\right)-(\mathcal{K}_i v)\left(\theta_2^{\frac{1}{\lambda}}\right)\right|}{|\theta_1-\theta_2|^\kappa} \leq C \max _{\theta_1 \in I}|v(\theta_1)|, \quad \forall \theta_1, \theta_2 \in I, \theta_1\neq \theta_2 .
	\end{align}
which is equivalent with 
	\begin{align}\label{eq_J6}
		\left\|(\mathcal{K}_i v)\left(\theta_1^{\frac{1}{\lambda}}\right)\right\|_{0, \kappa} \leq C\|v\|_{\infty}.
	\end{align}
\begin{proof}
From Lemma \ref{lemma_J6org}, we can obtain
	\begin{align}\label{eq_J6_2}
		\frac{\left|(\mathcal{K}_i v)\left(\theta_1^{\frac{1}{\lambda}}\right)-(\mathcal{K}_i v)\left(\theta_2^{\frac{1}{\lambda}}\right)\right|}{|\theta_1^{\frac{1}{\lambda}}-\theta_2^{\frac{1}{\lambda}}|^\kappa} \leq C \max _{\theta_1 \in I}|v(\theta_1)|, \quad \forall \theta_1, \theta_2 \in I, \theta_1\neq \theta_2,
	\end{align}
	
When $0<\lambda \leq 1$, we have 
	\begin{align}
		\left|\theta_1^{\frac{1}{\lambda}}-\theta_2^{\frac{1}{\lambda}}\right|^\kappa \sim O\left(|\theta_1-\theta_2|^\kappa\right) \text { or }\left|\theta_1^{\frac{1}{\lambda}}-\theta_2^{\frac{1}{\lambda}}\right|^\kappa \sim o\left(|\theta_1-\theta_2|^\kappa\right) \text {,}\notag
	\end{align}
	
Then it can be derived that
	\begin{align}\label{eq_J6_3}
		\frac{\left|(\mathcal{K}_i v)\left(\theta_1^{\frac{1}{\lambda}}\right)-(\mathcal{K}_i v)\left(\theta_2^{\frac{1}{\lambda}}\right)\right|}{|\theta_1-\theta_2|^\kappa}
		\leq C \frac{\left|(\mathcal{K}_i v)\left(\theta_1^{\frac{1}{\lambda}}\right)-(\mathcal{K}_i v)\left(\theta_2^{\frac{1}{\lambda}}\right)\right|}{|\theta_1^{\frac{1}{\lambda}}-\theta_2^{\frac{1}{\lambda}}|^\kappa}.
	\end{align}
	
Combing \eqref{eq_J6_1},\eqref{eq_J6_2},\eqref{eq_J6_3},we verify \eqref{eq_J6} and finish the proof.
\end{proof}
\end{Lemma}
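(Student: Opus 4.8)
The plan is to obtain Lemma~\ref{lemma_J6} as a direct consequence of the Hölder bound for $\mathcal{K}_i$ already proved in Lemma~\ref{lemma_J6org} (equivalently Lemma~\ref{lemma_kappa_norm}), combined with the elementary fact that the substitution $\theta\mapsto\theta^{1/\lambda}$ is Lipschitz on $I$ when $0<\lambda\le 1$.

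First I would invoke Lemma~\ref{lemma_J6org}, which gives $\|\mathcal{K}_i v\|_{0,\kappa}\le C\|v\|_\infty$. Unpacking the definition \eqref{eq_C_rk} with $r=0$, this means both $\max_{x\in I}|(\mathcal{K}_i v)(x)|\le C\|v\|_\infty$ and $|(\mathcal{K}_i v)(x_1)-(\mathcal{K}_i v)(x_2)|\le C\|v\|_\infty\,|x_1-x_2|^\kappa$ for all $x_1,x_2\in I$. Choosing $x_j:=\theta_j^{1/\lambda}\in I$ then reduces \eqref{eq_J6_1} to the single claim that $|\theta_1^{1/\lambda}-\theta_2^{1/\lambda}|\le C\,|\theta_1-\theta_2|$ for $\theta_1,\theta_2\in I$.

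This last claim is the only computational point, and it is where the hypothesis $0<\lambda\le 1$ is used. Since $1/\lambda\ge 1$, the function $\phi(\theta)=\theta^{1/\lambda}$ has derivative $\phi'(\theta)=\lambda^{-1}\theta^{1/\lambda-1}$, which is continuous on $[0,1]$ and bounded there by $\lambda^{-1}$ because $\theta^{1/\lambda-1}\le 1$; the mean value theorem then gives $|\phi(\theta_1)-\phi(\theta_2)|\le\lambda^{-1}|\theta_1-\theta_2|$, hence $|\theta_1^{1/\lambda}-\theta_2^{1/\lambda}|^\kappa\le\lambda^{-\kappa}|\theta_1-\theta_2|^\kappa$. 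Substituting this into the Hölder bound and absorbing the fixed factor $\lambda^{-\kappa}$ into $C$ yields \eqref{eq_J6_1}. For the equivalent norm statement \eqref{eq_J6}, I would further note that $\phi$ maps $I$ onto $I$, so $\max_{\theta\in I}|(\mathcal{K}_i v)(\theta^{1/\lambda})|=\max_{x\in I}|(\mathcal{K}_i v)(x)|\le C\|v\|_\infty$; adding this to the seminorm bound recovers the full $C^{0,\kappa}(I)$-norm estimate, and linearity of $\mathcal{K}_i$ in $v$ is inherited from Lemma~\ref{lemma_kappa_norm}.

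I do not expect a real obstacle: the argument is a one-line reduction plus a Lipschitz estimate. The only thing worth flagging is that $\lambda\le 1$ is essential — if $\lambda>1$ were allowed, $\phi'$ would blow up at the origin and the clean comparison $|\theta_1^{1/\lambda}-\theta_2^{1/\lambda}|^\kappa\lesssim|\theta_1-\theta_2|^\kappa$ would fail, so one could at best keep a Hölder (not Lipschitz) control — and that the resulting constant depends on $\lambda$ through $\lambda^{-\kappa}$, which is harmless since $\lambda$ is a fixed parameter of the method. This also gives a cleaner route than comparing the two increments asymptotically.
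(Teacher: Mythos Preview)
Your proposal is correct and follows essentially the same route as the paper: invoke Lemma~\ref{lemma_J6org} to get the H\"older bound for $\mathcal{K}_i v$ in the original variable, then use that $\theta\mapsto\theta^{1/\lambda}$ is Lipschitz on $I$ when $0<\lambda\le 1$ to transfer the estimate. Your mean-value-theorem argument with the explicit constant $\lambda^{-1}$ is in fact cleaner and more rigorous than the paper's somewhat informal appeal to $\left|\theta_1^{1/\lambda}-\theta_2^{1/\lambda}\right|^\kappa\sim O\!\left(|\theta_1-\theta_2|^\kappa\right)$, but the underlying idea is identical.
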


\begin{Lemma}\label{lemma_Hardy}$($$\cite{spectal_TangTao}$ Generalized Hardy's inequality$)$ For all measurable functions $g \geq 0$, weight functions $x$ and $y$, $1 < p \leq q < \infty
	$, the generalized Hardy's inequality can be expressed as follows:
	$$
	\left(\int_a^b\left|\left(\mathcal{M} g\right)(t)\right|^q x(t) d t\right)^{1 / q} \leq C\left(\int_a^b\left|g(x)\right|^p y(t) d t\right)^{1 / p}
	$$ 
if and only if 
	$$
	\sup _{a<t<b}\left(\int_t^b x(s) d s\right)^{1 / q}\left(\int_a^t y^{1-p_{0}}(s) d s\right)^{1 / p_{0}}<\infty, \quad p_{0}=\frac{p}{p-1},
	$$\\
where the operator $\mathcal{M}$ is defined as 
	$$
	(\mathcal{M} g)(t)=\int_a^t \rho(x, \tau) g(\tau) d \tau.
	$$
with $\rho(x, s)$ being a given kernel,  $-\infty \leq a < b \leq \infty$, .
 \end{Lemma}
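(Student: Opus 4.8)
The plan is to prove this as the classical Muckenhoupt characterization of the one-dimensional weighted Hardy inequality, establishing the two implications separately. Throughout write $w(s):=y(s)^{1-p_{0}}$ and $W(t):=\int_{a}^{t}w(s)\,ds$, and assume the kernel is bounded, $0\le\rho(t,\tau)\le M$ for $a\le\tau\le t\le b$, so that $0\le(\mathcal{M}g)(t)\le M\int_{a}^{t}g(\tau)\,d\tau$ when $g\ge0$; this reduces everything to the bare Hardy operator $(Hg)(t):=\int_{a}^{t}g(\tau)\,d\tau$ up to the harmless factor $M$ (for the converse one additionally uses that $\rho$ stays bounded below near the diagonal, which we tacitly assume). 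Denote the claimed supremum by $B$.

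Necessity is the short direction. Fix $t_{0}\in(a,b)$ and test the inequality with $g=w\,\chi_{(a,t_{0})}$, truncating $w$ from above and passing to the limit if $W(t_{0})=\infty$. Then $(Hg)(t)=W(t_{0})$ for every $t\ge t_{0}$, so the left-hand side is at least $W(t_{0})\bigl(\int_{t_{0}}^{b}x\bigr)^{1/q}$; on the right, the exponent identity $(1-p_{0})p+1=1-p_{0}$ gives $g^{p}y=w$, so the right-hand side equals $C\,W(t_{0})^{1/p}$. Cancelling $W(t_{0})^{1/p}$ and using $1-\tfrac1p=\tfrac1{p_{0}}$ yields $W(t_{0})^{1/p_{0}}\bigl(\int_{t_{0}}^{b}x\bigr)^{1/q}\le C$, which is the asserted condition.

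For sufficiency I would first treat $q=p$. Split $g(s)=\bigl[g(s)\,y(s)^{1/p}W(s)^{a_{1}}\bigr]\cdot\bigl[y(s)^{-1/p}W(s)^{-a_{1}}\bigr]$ with a parameter $a_{1}\in(0,1/p_{0})$ and apply Hölder's inequality with exponents $p$ and $p_{0}$; the second factor is explicit since $\int_{a}^{t}w\,W^{-a_{1}p_{0}}=c\,W(t)^{1-a_{1}p_{0}}$. Raising to the power $p$, integrating against $x(t)\,dt$, interchanging the order of integration, and integrating by parts in the resulting inner integral $\int_{s}^{b}W(t)^{\beta}x(t)\,dt$ with $\beta=p/p_{0}-a_{1}p$ (so $a_{1}p+\beta=p/p_{0}$), one finds that the Muckenhoupt bound $\int_{s}^{b}x\le B^{p}W(s)^{-p/p_{0}}$ forces every boundary term and every leftover integral to be a multiple of $W(s)^{-a_{1}p}$; this cancels the $W(s)^{a_{1}p}$ standing outside, and collecting constants gives $\int_{a}^{b}(Hg)^{p}x\le C\,B^{p}\int_{a}^{b}g^{p}y$. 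The strict inequality $a_{1}>0$ is precisely what keeps $\int_{s}^{b}W^{-a_{1}p-1}W'$ convergent at $b$.

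The case $q>p$ is where I expect the real work to lie, since the interchange above now produces $\bigl(\int_{a}^{t}g^{p}yW^{a_{1}p}\bigr)^{q/p}$ with $q/p>1$ and the weight $W^{a_{1}p}$ can no longer be dropped. Here I would switch to the dyadic blocking technique: choose points $\{t_{k}\}$, a finite or bi-infinite family according to whether $W(a^{+})$ and $W(b^{-})$ are finite, with $W(t_{k})=2^{k}$; decompose $(a,b)=\bigcup_{k}[t_{k},t_{k+1})$ and on each block estimate $(Hg)(t)$ by splitting $\int_{a}^{t}=\int_{a}^{t_{k}}+\int_{t_{k}}^{t}$, controlling the first piece by the cumulative mass on earlier blocks and the second by a local Hölder estimate carrying the factor $W(t_{k})^{1/p_{0}}\le B\,(\int_{t_{k}}^{b}x)^{-1/q}$ from the Muckenhoupt condition at $t_{k}$. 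Summing the resulting sequence inequality and using the embedding $\ell^{p}\hookrightarrow\ell^{q}$, valid exactly because $p\le q$, yields the full estimate. The two genuinely delicate points are the truncation and limiting arguments needed when $W$ degenerates or blows up at an endpoint, and the bookkeeping that makes the geometric sums converge; these are the parts I would write out most carefully.
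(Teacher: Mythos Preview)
The paper does not prove this lemma at all: it is quoted verbatim from \cite{spectal_TangTao} as a standard tool, with no argument supplied. So there is no ``paper's proof'' to compare against; your proposal is therefore necessarily a different route, namely an actual proof where the paper offers none.

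Your outline is the classical Muckenhoupt--Tomaselli argument and is essentially sound. The necessity step via the test function $g=w\chi_{(a,t_{0})}$ is exactly right, and the $q=p$ sufficiency via H\"older with the auxiliary weight $W^{a_{1}}$ is the standard computation. For $q>p$ the dyadic-level-set decomposition you sketch is one of the accepted proofs; an alternative is to observe that once the $q=p$ case is done, interpolation-type arguments or a direct Minkowski estimate also close the gap, but your blocking approach is perfectly legitimate.

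One point deserves a caveat. The lemma as stated carries a general kernel $\rho$, yet the Muckenhoupt condition written down makes no reference to $\rho$; as stated the equivalence is simply false for arbitrary kernels (take $\rho$ unbounded, or vanishing). You patch this by assuming $\rho$ bounded above (for sufficiency) and bounded below near the diagonal (for necessity), which is the honest fix and matches how the lemma is actually \emph{used} in the paper---always with $\rho\equiv1$ or a smooth bounded kernel. You should say this explicitly rather than ``tacitly assume'' it, since without that restriction the biconditional is wrong.
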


In order to prove the interpolation error estimate in the $L^{\infty}$-norm and $L_{\omega^{\alpha,\beta,\lambda}}^{2}$-norm, we should introduce the following lemmas:
\begin{Lemma}\label{lemma_Interpolation_infinity}$($see $\cite{Muntz} $$)$
$I_{N, \lambda}^{\alpha, \beta}$ is the interpolation operator of fractional $Jacobi$ polynomials, when $-1<\alpha, \beta \leq-\frac{1}{2}$, for any $0 \leq l \leq m \leq N+1$, we have 
	$$
	\begin{aligned}\label{eq_Interpolation_infinity}
	\left\|v-I_{N, \lambda}^{\alpha, \beta} v\right\|_{\infty} \leq C N^{1 / 2-m}\left\|\partial_\theta^m v\left(\theta^{\frac{1}{\lambda}}\right)\right\|_{0, \omega^{\alpha+m, \beta+m, 1}},\quad \forall v\left(\theta^{\frac{1}{\lambda}}\right) \in B_{\alpha, \beta}^{m, 1}(I), m \geq 1.
	\end{aligned}
	$$ 
\end{Lemma}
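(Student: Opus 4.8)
The plan is to reduce the fractional interpolation error to the classical Jacobi interpolation error through the substitution $z=\theta^{\lambda}$, and then to invoke the known $L^{\infty}$ estimate for classical Jacobi--Gauss interpolation.

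First I would exploit the identity recorded in \eqref{eq_Interpolation_operator_sum}, namely $I_{N,\lambda}^{\alpha,\beta}v(\theta)=I_{N,1}^{\alpha,\beta}u(z)$ with $u(z):=v\left(z^{1/\lambda}\right)$ and $z=\theta^{\lambda}$. This rests on two facts already available: the fractional Jacobi--Gauss nodes satisfy $\theta_{j}=\left((t_{j}+1)/2\right)^{1/\lambda}$, so they are exactly $z_{j}^{1/\lambda}$ with $z_{j}$ the corresponding nodes on $I$, and the basis functions obey $F_{j,\lambda}(\theta)=F_{j,1}(z)$. Since $\theta\mapsto\theta^{\lambda}$ is an increasing bijection of $I$ onto itself, the sup-norm is invariant, $\left\|v-I_{N,\lambda}^{\alpha,\beta}v\right\|_{\infty,I}=\left\|u-I_{N,1}^{\alpha,\beta}u\right\|_{\infty,I}$, and the weighted norm of the $m$-th derivative transforms consistently, so the right-hand side of the claimed bound is precisely $\left\|\partial_{z}^{m}u\right\|_{0,\omega^{\alpha+m,\beta+m,1}}$; in particular $u\in B_{\alpha,\beta}^{m,1}(I)$ if and only if $v\left(\theta^{1/\lambda}\right)\in B_{\alpha,\beta}^{m,1}(I)$.

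Next I would apply the standard $L^{\infty}$ interpolation error estimate for the classical Jacobi--Gauss interpolant in the range $-1<\alpha,\beta\le-\frac{1}{2}$: for $1\le m\le N+1$,
$$
\left\|u-I_{N,1}^{\alpha,\beta}u\right\|_{\infty}\le C\,N^{1/2-m}\left\|\partial_{z}^{m}u\right\|_{0,\omega^{\alpha+m,\beta+m,1}},\qquad u\in B_{\alpha,\beta}^{m,1}(I).
$$
To make this self-contained one would split $u-I_{N,1}^{\alpha,\beta}u=(u-\pi_{N}u)-I_{N,1}^{\alpha,\beta}(u-\pi_{N}u)$, where $\pi_{N}$ is the $L_{\omega^{\alpha,\beta,1}}^{2}$-orthogonal projection onto $P_{N}^{1}(I)$; one bounds $\|u-\pi_{N}u\|_{\infty}$ and $\|u-\pi_{N}u\|_{0,\omega^{\alpha,\beta,1}}$ by the Jacobi projection estimates, and then controls $\|I_{N,1}^{\alpha,\beta}(u-\pi_{N}u)\|_{\infty}$ using the stability of Gauss interpolation on $P_{N}^{1}$ together with the inverse inequality trading $\|\cdot\|_{\infty}$ for $N^{1/2}\|\cdot\|_{0,\omega^{\alpha,\beta,1}}$ on $P_{N}^{1}$. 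Assembling the two contributions produces the exponent $N^{1/2-m}$.

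Finally I would transform back via $z=\theta^{\lambda}$, which returns the statement verbatim. The only genuinely delicate ingredient is the classical estimate of the third step, where the restriction $\alpha,\beta\le-\frac{1}{2}$ is essential: it governs the endpoint behaviour of the weight, the boundedness of the Gauss quadrature weights, and the validity of the inverse inequality with the $N^{1/2}$ loss; the reduction steps on either side of it are routine changes of variable. Since this classical bound, together with the node and weight correspondence, is exactly what is established in \cite{Muntz}, in the paper I would simply cite that reference, with the argument above indicating how it is obtained.
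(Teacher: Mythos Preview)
Your proposal is correct and aligns with the paper's treatment: the paper does not give its own proof of this lemma but simply cites \cite{Muntz}, exactly as you conclude in your final paragraph. Your additional sketch of the underlying argument---reducing to the classical Jacobi interpolation estimate via the change of variable $z=\theta^{\lambda}$ using \eqref{eq_Interpolation_operator_sum}, and then invoking the standard $L^{\infty}$ bound for $-1<\alpha,\beta\le-\frac{1}{2}$---is the correct mechanism behind the cited result and goes beyond what the paper itself records.
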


\begin{Lemma}\label{lemma_Interpolation_L^2}$($see $\cite{Muntz} $$)$
It holds for $\forall v\left(\theta^{\frac{1}{\lambda}}\right) \in B_{\alpha, \beta}^{m, 1}(I), m \geq 1$, then
	\begin{align}\label{eq_Interpolation_L^2}
		\left\|v-I_{N, \lambda}^{\alpha, \beta} v\right\|_{0,\omega^{\alpha,\beta,1}} \leq C N^{-m}\left\|\partial_\theta^m v\left(\theta^{\frac{1}{\lambda}}\right)\right\|_{0, \omega^{\alpha+m, \beta+m, 1}}.
	\end{align}
\end{Lemma}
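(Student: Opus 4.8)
The plan is to reduce the fractional-Jacobi interpolation on $I$ to the \emph{classical} Jacobi interpolation on $I$ via the substitution $\theta=z^{1/\lambda}$ (equivalently $z=\theta^\lambda$), exactly as recorded in \eqref{eq_Interpolation_operator_sum}: setting $\widetilde v(z):=v(z^{1/\lambda})$, one has $I_{N,\lambda}^{\alpha,\beta}v(\theta)=I_{N,1}^{\alpha,\beta}\widetilde v(z)$, so that the fractional interpolation error of $v$ is the classical Jacobi interpolation error of $\widetilde v$ read in the variable $z$. The $\lambda$-weight \eqref{eq_wight_equation} is tailored to this change of variables: a one-line Jacobian computation gives $\omega^{\alpha,\beta,\lambda}(\theta)\,\mathrm d\theta=(1-z)^\alpha z^{\beta}\,\mathrm dz=\omega^{\alpha,\beta,1}(z)\,\mathrm dz$, and likewise $\omega^{\alpha+m,\beta+m,\lambda}(\theta)\,\mathrm d\theta=\omega^{\alpha+m,\beta+m,1}(z)\,\mathrm dz$.

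From this I would first record the norm identities
$$\bigl\|v-I_{N,\lambda}^{\alpha,\beta}v\bigr\|_{0,\omega^{\alpha,\beta,1}}=\bigl\|\widetilde v-I_{N,1}^{\alpha,\beta}\widetilde v\bigr\|_{0,\omega^{\alpha,\beta,1}},\qquad \bigl\|\partial_\theta^m v(\theta^{1/\lambda})\bigr\|_{0,\omega^{\alpha+m,\beta+m,1}}=\bigl\|\partial_z^m\widetilde v\bigr\|_{0,\omega^{\alpha+m,\beta+m,1}},$$
the first being immediate after renaming the integration variable and the second holding because $\partial_\theta^m v(\theta^{1/\lambda})$ is by definition $\partial_z^m\widetilde v(z)$; moreover the hypothesis $v(\theta^{1/\lambda})\in B_{\alpha,\beta}^{m,1}(I)$ is precisely $\widetilde v\in B_{\alpha,\beta}^{m,1}(I)$. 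Then I would invoke the classical $L^2_{\omega^{\alpha,\beta,1}}$-interpolation estimate for Jacobi--Gauss points (see \cite{Muntz}): for $\widetilde v\in B_{\alpha,\beta}^{m,1}(I)$ with $m\ge 1$ and $\alpha,\beta>-1$, $\bigl\|\widetilde v-I_{N,1}^{\alpha,\beta}\widetilde v\bigr\|_{0,\omega^{\alpha,\beta,1}}\le CN^{-m}\bigl\|\partial_z^m\widetilde v\bigr\|_{0,\omega^{\alpha+m,\beta+m,1}}$. Substituting the two identities above turns this into exactly \eqref{eq_Interpolation_L^2}.

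If one prefers not to quote the classical estimate, the standard route is to compare $I_{N,1}^{\alpha,\beta}\widetilde v$ against the $L^2_{\omega^{\alpha,\beta,1}}$-orthogonal projection $\pi_{N-1}\widetilde v$, writing $\widetilde v-I_{N,1}^{\alpha,\beta}\widetilde v=(\widetilde v-\pi_{N-1}\widetilde v)+I_{N,1}^{\alpha,\beta}(\pi_{N-1}\widetilde v-\widetilde v)$, bounding the first summand by the projection error estimate and the second via exactness of the $(N{+}1)$-point Gauss quadrature on $P_{2N+1}^{1}(I)$ together with the aliasing bound --- in contrast with the $L^\infty$ estimate of Lemma \ref{lemma_Interpolation_infinity}, no factor $N^{1/2}$ is lost here. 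Either way, the computations are routine; the only point that needs care is the bookkeeping of the change of variables --- verifying that the $\lambda$-weight is exactly the pushforward of the Jacobi weight and that $\widetilde v=v(\cdot^{1/\lambda})$ maps $B_{\alpha,\beta}^{m,1}(I)$ into itself --- so that the lemma genuinely reduces to the known classical estimate. That verification, rather than any inequality, is the main obstacle.
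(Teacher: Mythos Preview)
The paper does not supply its own proof of this lemma; it is quoted directly from \cite{Muntz}. Your reduction to the classical Jacobi--Gauss $L^2$ interpolation estimate via the substitution $z=\theta^{\lambda}$, using the identity $\omega^{\alpha,\beta,\lambda}(\theta)\,\mathrm d\theta=\omega^{\alpha,\beta,1}(z)\,\mathrm dz$ together with \eqref{eq_Interpolation_operator_sum}, is exactly the argument given in that reference and is correct. One notational caveat: the weight index on the left-hand side of \eqref{eq_Interpolation_L^2} appears to be a typo for $\omega^{\alpha,\beta,\lambda}$ (compare with the way the lemma is actually applied in \eqref{eq_E8_L2}); your change-of-variables computation is precisely what reconciles the two readings, so your ``renaming'' step should be understood as the substitution $z=\theta^{\lambda}$ rather than a literal relabeling of the integration variable.
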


\begin{Lemma}\label{lemma_Lesbegue_constant}$($see $\cite{Muntz} $$)$
Let $\left\{F_{j,\lambda}\left(\theta\right)\right\}_{j=0}^N$ be the generalized Lagrange interpolation basis functions
associated with the Gauss points of the fractional $Jacobi$ polynomials $J_{N+1}^{\alpha, \beta, \lambda}(x)$. $Lesbegue$ constant  is presented as 
	\begin{align}\label{eq_Lesbegue_constant}
		\left\|I_{N, \lambda}^{\alpha, \beta}\right\|_{\infty}:=\max _{x \in I} \sum_{i=0}^N\left|F_{j,\lambda}\left(\theta\right)\right|= \begin{cases}O(\log N), & -1<\alpha, \beta \leq-\frac{1}{2}, \\ O\left(N^{\gamma+\frac{1}{2}}\right), & \gamma=\max (\alpha, \beta), \text {otherwise}.
		\end{cases}
	\end{align}
\end{Lemma}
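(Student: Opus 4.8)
The plan is to reduce the fractional Lebesgue constant to the classical Jacobi--Gauss Lebesgue constant via the substitution $z=\theta^{\lambda}$, and then to invoke the known growth estimates for the classical case. First I would observe that, writing $z=\theta^{\lambda}$ and $z_i=\theta_i^{\lambda}$, the product formula for the generalized basis functions gives
$$
F_{j,\lambda}(\theta)=\prod_{i=0,\,i\neq j}^{N}\frac{\theta^{\lambda}-\theta_i^{\lambda}}{\theta_j^{\lambda}-\theta_i^{\lambda}}=\prod_{i=0,\,i\neq j}^{N}\frac{z-z_i}{z_j-z_i}=F_{j,1}(z),
$$
so $F_{j,\lambda}$ is nothing but the ordinary Lagrange basis attached to the nodes $\{z_i\}_{i=0}^{N}$, which are exactly the Jacobi--Gauss points of $J_{N+1}^{\alpha,\beta}$ transplanted to $I$ (recall $2\theta_j^{\lambda}-1=t_j$). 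Since $\theta\mapsto\theta^{\lambda}$ is a bijection of $I$ onto itself, this yields
$$
\left\|I_{N,\lambda}^{\alpha,\beta}\right\|_{\infty}=\max_{\theta\in I}\sum_{j=0}^{N}\left|F_{j,\lambda}(\theta)\right|=\max_{z\in I}\sum_{j=0}^{N}\left|F_{j,1}(z)\right|=\left\|I_{N,1}^{\alpha,\beta}\right\|_{\infty},
$$
in accordance with \eqref{eq_Interpolation_operator_sum}; hence it suffices to treat the case $\lambda=1$.

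For $\lambda=1$ I would map $I$ onto $[-1,1]$ by $x=2z-1$, so that $\sum_{j}|F_{j,1}(z)|=\sum_{j}|\ell_j(x)|$ becomes the standard Lebesgue function of Jacobi--Gauss interpolation, with $\ell_j(x)=\frac{J_{N+1}^{\alpha,\beta}(x)}{(x-x_j)\,(J_{N+1}^{\alpha,\beta})'(x_j)}$. The estimate then follows from the classical toolkit: the Gauss--Christoffel identity expressing $(J_{N+1}^{\alpha,\beta})'(x_j)$ through the Christoffel number $w_j$, the uniform bounds for $|J_{N+1}^{\alpha,\beta}(x)|$ on $[-1,1]$ (of size $O(N^{-1/2})$ in the bulk, $O(N^{\alpha})$ near $x=1$ and $O(N^{\beta})$ near $x=-1$), and the node-spacing estimates $x_{j+1}-x_j\sim N^{-1}$ in the interior with the usual endpoint refinement. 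Splitting $\sum_j|\ell_j(x)|$ into the few indices with $x_j$ closest to $x$ (contributing $O(1)$ in the interior, or $O(N^{\gamma+1/2})$ near an endpoint when $\gamma=\max(\alpha,\beta)>-\tfrac12$) and the remaining indices (which sum to a harmonic-type series of size $O(\log N)$) produces the two cases: $O(\log N)$ when $-1<\alpha,\beta\le-\tfrac12$, and $O(N^{\gamma+1/2})$ otherwise.

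The hard part will be the second step, namely the sharp uniform asymptotics of Jacobi polynomials, their zeros, and the Christoffel numbers near $x=\pm1$, which is precisely what generates the endpoint growth $N^{\gamma+1/2}$. Since these are classical facts --- exactly the ingredients already underlying the interpolation estimates in \cite{Muntz} and the references cited there --- I would not reproduce the analysis but simply quote it; the only genuinely new observation required here is the elementary transplantation of the first step, which shows that replacing ordinary polynomials by fractional Jacobi polynomials leaves the Lebesgue constant unchanged.
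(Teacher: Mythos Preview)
The paper does not supply its own proof of this lemma; it is stated with a citation to \cite{Muntz} and used as a black box. Your reduction via $z=\theta^{\lambda}$ is exactly the right (and essentially only) observation needed to pass from the fractional to the classical setting, and it matches the identity \eqref{eq_Interpolation_operator_sum} already recorded in the paper; the classical Jacobi--Gauss Lebesgue-constant bounds you then invoke are standard and are what \cite{Muntz} itself relies on. So your proposal is correct and is in line with how the result is obtained in the cited source, even though there is no in-paper proof to compare against.
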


\begin{Lemma}\label{lemma_Ii1Ii2}$($see $\cite{Muntz} $$)$
For $\forall v \in B_{\alpha, \beta}^{m, 1}(I), m \geq 1$,$\forall \phi \in \mathcal{P}_N^1(I)$, there are some conclusions as follows
	\begin{align}\label{eq_con&dis}
		\left|(v, \phi)_{\omega^{\alpha, \beta, 1}}-(v, \phi)_{N, \omega^{\alpha, \beta, 1}}\right| \leq C N^{-m}\left\|\partial_\theta^m v\right\|_{0, \omega^{\alpha+m, \beta+m, 1}}\|\phi\|_{0, \omega^{\alpha, \beta, 1}},
	\end{align}
where
	\begin{align}
		&(v, \phi)_{\omega^{\alpha, \beta, 1}}=\int_0^1v(\theta)\phi(\theta)\theta^{\alpha}(1-\theta)^{\beta}d\theta\label{eq_inner_continue},\\
		&(v, \phi)_{N, \omega^{\alpha, \beta, 1}}=\sum_{k=0}^{N}v(\theta_k)\phi(\theta_k)\omega_k.\label{eq_inner_discrete}
	\end{align}
\end{Lemma}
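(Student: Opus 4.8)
The argument hinges on the exactness of Gauss--Jacobi quadrature: the $(N+1)$-point rule whose nodes $\{\theta_k\}_{k=0}^N$ are the zeros of $J_{N+1}^{\alpha,\beta}$ integrates every algebraic polynomial of degree at most $2N+1$ exactly against the weight $\omega^{\alpha,\beta,1}$. The plan is to insert the classical ($\lambda=1$) Jacobi interpolant $I_{N,1}^{\alpha,\beta}v\in\mathcal{P}_N^1(I)$ of $v$ at these very nodes and let the exactness absorb it. Since $I_{N,1}^{\alpha,\beta}v$ coincides with $v$ at each $\theta_k$, the discrete pairing is unchanged by this substitution, i.e.\ $(v,\phi)_{N,\omega^{\alpha,\beta,1}}=(I_{N,1}^{\alpha,\beta}v,\phi)_{N,\omega^{\alpha,\beta,1}}$; and since $\deg\!\bigl(I_{N,1}^{\alpha,\beta}v\cdot\phi\bigr)\le N+N=2N<2N+1$, the quadrature is exact on that product, so $(I_{N,1}^{\alpha,\beta}v,\phi)_{N,\omega^{\alpha,\beta,1}}=(I_{N,1}^{\alpha,\beta}v,\phi)_{\omega^{\alpha,\beta,1}}$. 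Chaining these two identities collapses the whole quadrature error to a single continuous pairing:
\[
(v,\phi)_{\omega^{\alpha,\beta,1}}-(v,\phi)_{N,\omega^{\alpha,\beta,1}}=\bigl(v-I_{N,1}^{\alpha,\beta}v,\ \phi\bigr)_{\omega^{\alpha,\beta,1}}.
\]

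From here I would apply the Cauchy--Schwarz inequality in $L^2_{\omega^{\alpha,\beta,1}}(I)$ to get $\bigl|(v-I_{N,1}^{\alpha,\beta}v,\phi)_{\omega^{\alpha,\beta,1}}\bigr|\le\|v-I_{N,1}^{\alpha,\beta}v\|_{0,\omega^{\alpha,\beta,1}}\,\|\phi\|_{0,\omega^{\alpha,\beta,1}}$, and then bound the interpolation error by the $\lambda=1$ case of Lemma~\ref{lemma_Interpolation_L^2}, which (since $\theta^{1/\lambda}=\theta$ in that case, and $v\in B_{\alpha,\beta}^{m,1}(I)$ is precisely its hypothesis) reads $\|v-I_{N,1}^{\alpha,\beta}v\|_{0,\omega^{\alpha,\beta,1}}\le CN^{-m}\|\partial_\theta^m v\|_{0,\omega^{\alpha+m,\beta+m,1}}$. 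Multiplying the last two displays yields \eqref{eq_con&dis}.

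I do not expect a genuine obstacle; essentially all of the work is bookkeeping. The points to be careful about are: verifying the degree count $\deg(I_{N,1}^{\alpha,\beta}v\cdot\phi)\le 2N$, so that the Gauss rule is truly exact on that product (this is what forces the degree-$N$ interpolant rather than a lower-degree projection); noting that the Gauss nodes lie in the open interval $(0,1)$, where a $B_{\alpha,\beta}^{m,1}$ function with $m\ge1$ is continuous, so that the discrete pairing $(v,\phi)_{N,\omega^{\alpha,\beta,1}}$ is well defined and $I_{N,1}^{\alpha,\beta}v$ makes sense; and keeping track that the norm of $\phi$ on the right of \eqref{eq_con&dis} is the continuous $L^2_{\omega^{\alpha,\beta,1}}$-norm — no discrete/continuous norm equivalence for $\phi$ is needed, exactly because the interpolation choice annihilates the discrete inner-product discrepancy outright.
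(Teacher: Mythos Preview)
Your argument is correct and is precisely the standard proof of this Gauss-quadrature error bound: replace $v$ by its degree-$N$ interpolant at the Gauss nodes (harmless in the discrete sum), invoke $(2N{+}1)$-exactness to turn the discrete pairing into a continuous one, then Cauchy--Schwarz plus the $L^2_{\omega^{\alpha,\beta,1}}$ interpolation estimate (Lemma~\ref{lemma_Interpolation_L^2} with $\lambda=1$). The paper itself does not supply a proof of this lemma --- it simply cites \cite{Muntz} --- so there is no alternative argument to compare against; your route is the canonical one that reference uses.
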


\begin{Lemma}\label{lemma_E6E7_L^2}$($see $\cite{Muntz} $$)$
For any bounded function $v(\theta)$ defined on $I$, there exists a constant $C$ independent of $v$ such that
	\begin{align}\label{eq_J1J2_L^2}
		\qquad\qquad\qquad\qquad\qquad\qquad
		\sup _N\left\|I_{N, \lambda}^{\alpha, \beta} v\right\|_{0,\omega^{\alpha, \beta, \lambda}} \leq C\|v\|_{\infty} .
	\end{align}	
\end{Lemma}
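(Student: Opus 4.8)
The plan is to reduce the statement to the uniform boundedness of the \emph{standard} (non-fractional) Jacobi interpolation operator $I_{N,1}^{\alpha,\beta}$ as a map from $L^{\infty}(I)$ into $L^2_{\omega^{\alpha,\beta,1}}(I)$, and then to prove the latter by exploiting the exactness of Jacobi--Gauss quadrature. The regularity of $v$ plays no role — only $\|v\|_{\infty}<\infty$ is used — so no interpolation-error estimate (such as Lemma \ref{lemma_Interpolation_L^2}) is invoked.

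The reduction step uses the identity \eqref{eq_Interpolation_operator_sum}. Writing $z=\theta^{\lambda}$ and $\tilde v(z):=v(z^{1/\lambda})$, one has $I_{N,\lambda}^{\alpha,\beta}v(\theta)=I_{N,1}^{\alpha,\beta}\tilde v(z)$. In the integral $\|I_{N,\lambda}^{\alpha,\beta}v\|_{0,\omega^{\alpha,\beta,\lambda}}^2=\int_0^1|I_{N,\lambda}^{\alpha,\beta}v(\theta)|^2\omega^{\alpha,\beta,\lambda}(\theta)\,d\theta$ I would substitute $\theta=z^{1/\lambda}$, so that $d\theta=\lambda^{-1}z^{1/\lambda-1}\,dz$ and, by \eqref{eq_wight_equation}, $\omega^{\alpha,\beta,\lambda}(z^{1/\lambda})=\lambda(1-z)^{\alpha}z^{\beta+1-1/\lambda}$. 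The factor $\lambda$ and the powers $z^{1/\lambda-1}\cdot z^{1/\lambda}$-type terms cancel precisely, leaving
$$\bigl\|I_{N,\lambda}^{\alpha,\beta}v\bigr\|_{0,\omega^{\alpha,\beta,\lambda}}^2=\int_0^1\bigl|I_{N,1}^{\alpha,\beta}\tilde v(z)\bigr|^2\,\omega^{\alpha,\beta,1}(z)\,dz=\bigl\|I_{N,1}^{\alpha,\beta}\tilde v\bigr\|_{0,\omega^{\alpha,\beta,1}}^2.$$
Since $z\mapsto z^{1/\lambda}$ maps $I$ onto $I$, also $\|\tilde v\|_{\infty}=\|v\|_{\infty}$. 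Hence it suffices to show $\|I_{N,1}^{\alpha,\beta}w\|_{0,\omega^{\alpha,\beta,1}}\le C\|w\|_{\infty}$ for every bounded $w$, with $C$ independent of $N$.

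For this I would use that the $(N+1)$-point Jacobi--Gauss rule $\{z_j,\omega_j\}_{j=0}^N$ (the images under $z=(t_j+1)/2$, $\omega_j=2^{-(\alpha+\beta+1)}w_j$ of the standard nodes and weights) is exact on $\mathcal{P}_{2N+1}^1(I)$, together with the positivity of the Gauss weights. Because $\bigl(I_{N,1}^{\alpha,\beta}w\bigr)^2\in\mathcal{P}_{2N}^1(I)$ and $I_{N,1}^{\alpha,\beta}w(z_j)=w(z_j)$ (the $z_j$ being exactly the interpolation nodes),
$$\bigl\|I_{N,1}^{\alpha,\beta}w\bigr\|_{0,\omega^{\alpha,\beta,1}}^2=\sum_{j=0}^N\bigl(I_{N,1}^{\alpha,\beta}w(z_j)\bigr)^2\omega_j=\sum_{j=0}^N w(z_j)^2\omega_j\le\|w\|_{\infty}^2\sum_{j=0}^N\omega_j.$$
Finally, exactness on the constant function gives $\sum_{j=0}^N\omega_j=\int_0^1\omega^{\alpha,\beta,1}(z)\,dz$, a finite Beta-function constant independent of $N$; taking square roots and combining with the reduction step yields the claim with $C$ equal to that constant's square root.

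The whole argument is bookkeeping. The only point requiring genuine care is the quadrature-exactness step: one must verify the degree count ($\bigl(I_{N,1}^{\alpha,\beta}w\bigr)^2$ has degree $2N\le 2N+1$), that the nodes defining $I_{N,1}^{\alpha,\beta}$ coincide with the Gauss nodes so that the interpolant reproduces $w$ there, and that the Gauss weights are positive (standard for Jacobi--Gauss rules). No Gronwall-type or weakly-singular-kernel estimates are needed, so this is among the more routine lemmas of the paper.
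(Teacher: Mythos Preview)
Your argument is correct: the change of variable $z=\theta^{\lambda}$ reduces the fractional-weighted norm to the standard one via \eqref{eq_Interpolation_operator_sum} and \eqref{eq_wight_equation}, and then exactness of the $(N{+}1)$-point Jacobi--Gauss rule on $\mathcal{P}_{2N+1}^1$ together with positivity of the weights gives the uniform bound with $C^2=\int_0^1\omega^{\alpha,\beta,1}(z)\,dz=B(\alpha+1,\beta+1)$. The paper itself does not supply a proof here---it simply cites \cite{Muntz}---and your proof is precisely the standard one found in that reference, so there is no substantive difference to discuss.
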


\section{Numerical Scheme} \label{section_Numerical Scheme}
For the given positive integer $N$, we will introduce fractional $Jacobi-Gauss$ quadrature nodes as collocation points, denoted by $\lbrace \theta_j \rbrace_{j=0}^{N}$, with corresponding weights $\lbrace \omega_j \rbrace_{j=0}^{N}$. The form of the weight function is given in \eqref{eq_wight_equation},with the space $P_N^1$ defined in the previous section. For the generalized interpolation operator $I_{N, \lambda}^{\alpha, \beta}$ defined in \eqref{eq_Interpolation_operator_sum}, we have
	$$
	\begin{aligned}
		I_{N, \lambda}^{\alpha, \beta}v\left(\theta_i\right)=v\left(\theta_i\right),0 \leq i \leq N,\notag
	\end{aligned}
	$$
Via the change of variable $\tau=\varepsilon s$, \eqref{eq_prime} and \eqref{eq_origin} can be rewritten as
	\begin{align}
		&y^{\prime}(t)= p_1{(t)} y(t)+q_1{(t)} y(\varepsilon t)+g_1{(t)}+t^{-\gamma} \int_0^t(t-s)^{-\mu} 	s^{\mu+\gamma-1} K_1(t, s) y(s) d s \notag\\ 
		&\qquad+t^{-\gamma} \int_0^t(t-s)^{-\mu}s^{\mu+\gamma-1} K_2(t, \varepsilon s) y(\varepsilon s) d s,\\
		&y\left(0\right)=y_0.
	\end{align}
For the sake of the theory of orthogonal polynomials, we make the transformation $t=T\theta$. Meanwhile, a linear transformation $s=T\eta$ will be happened in order to transfer the integral interval $[0,T\theta]$ to $[0,\theta]$, then they become
	\begin{align}
		&\varphi^{\prime}(\theta)=\tilde{p}_1(\theta) \varphi(\theta)+\tilde{q}_1(\theta)\varphi(\varepsilon \theta)+\tilde{g}_1(\theta)+\theta^{-\gamma} \int_0^\theta(\theta-\eta)^{-\mu} \eta^{\mu+\gamma-1} \bar{K}_1(\theta, \eta) \varphi(\eta) d \eta \notag\\
		&\quad\qquad+\theta^{-\gamma} \int_0^\theta(\theta-\eta)^{-\mu}\eta^{\mu+\gamma-1} \bar{K}_2(\theta, \varepsilon \eta)\varphi(\varepsilon \eta) d \eta\label{eq_phi_prime}\\
		&\varphi(0)=\varphi_0=y_0\label{eq_phi_origin}.
	\end{align}
where
	\begin{align}
		&\tilde{p}_1(\theta)=Tp_1(T\theta),\tilde{q}_1(\theta)=Tq_1(T\theta),\tilde{g}_1(\theta)=Tg_1(T\theta), \varphi(\theta)=y(T\theta),\notag\\
		&\bar{K}_1(\theta, \eta)=TK_1(T\theta,T\eta),\bar{K}_2(\theta, \varepsilon \eta)=TK_2(T\theta, T\varepsilon \eta).\notag
	\end{align}
\eqref{eq_phi_origin} can be turned into
\begin{align}
	&\varphi(\theta)=\varphi_0+\int_0^{\theta}\varphi^{\prime}(\eta)d\eta.\label{eq_initial_integral}
\end{align}
Firstly, \eqref{eq_phi_prime} and \eqref{eq_initial_integral} hold at the collocation points $\lbrace \theta_i \rbrace_{i=0}^{N}$:
\begin{align}
	&\varphi^{\prime}(\theta_i)=\tilde{p}_1(\theta_i) \varphi(\theta_i)+\tilde{g}_1(\theta_i)\varphi(\varepsilon \theta_i)+\tilde{g}_1(\theta_i)+\theta_i^{-\gamma} \int_0^{\theta_i}(\theta_i-\eta)^{-\mu} \eta^{\mu+\gamma-1} \bar{K}_1(\theta_i, \eta) \varphi(\eta) d \eta ,\notag\\
	&\quad\qquad+\theta_i^{-\gamma} \int_0^{\theta_i}(\theta_i-\eta)^{-\mu}\eta^{\mu+\gamma-1} \bar{K}_2(\theta_i, \varepsilon \eta)(\varepsilon \eta) d \eta,\label{eq_thetai_collocation_prime}\\
	&\varphi(\theta_i)=\varphi_0+\int_0^{\theta_i}\varphi^{\prime}(\eta)d\eta,\label{eq_thetai_collocation_origin}\\
	&\varphi(\varepsilon\theta_i)=\varphi_0+\varepsilon\int_0^{\theta_i}\varphi^{\prime}(\varepsilon\eta)d\eta.\label{eq_thetai_collocation_eorigin}
\end{align}
Let $\eta=\theta_i\xi^{\frac{1}{\lambda}}= \eta_i(\xi)$ and transfer the integration interval from $[0,\theta_i]$ to a fixed interval $[0,1]$,
	\begin{align}
		&\varphi^{\prime}(\theta_i)=\tilde{p}_1(\theta_i) \varphi(\theta_i)+\tilde{q}_1(\theta_i)\varphi(\varepsilon \theta_i)+\tilde{g}_1(\theta_i)+
		\int_0^1(1-\xi)^{-\mu}\xi^{\frac{\mu+\gamma}{\lambda}-1}\widetilde{K}_1(\theta_i, \eta_i(\xi))\varphi( \eta_i(\xi))d\xi\notag\\
		&\quad\qquad+	\int_0^1(1-\xi)^{-\mu}\xi^{\frac{\mu+\gamma}{\lambda}-1}\widetilde{K}_2(\theta_i,\varepsilon \eta_i(\xi))\varphi(\varepsilon \eta_i(\xi))d\xi,\label{eq_collocation_prime}\\
		&\varphi\left(\theta_i\right)=\varphi_0+\int_0^{\theta_i} \varphi^{\prime}(\eta) d\eta\notag \\
		&\quad\qquad=\varphi_0+\frac{\theta_i}{\lambda} \int_0^1 \xi^{\frac{1}{\lambda}-1} \varphi^{\prime}\left( \eta_i(\xi)\right) d \xi \label{eq_collocation_origin},\\
		&\varphi\left(\varepsilon\theta_i\right)=\varphi_0+\frac{\varepsilon\theta_i}{\lambda} \int_0^1 \xi^{\frac{1}{\lambda}-1} \varphi^{\prime}\left( \varepsilon\eta_i(\xi)\right) d \xi .\label{eq_collocation_eorigin}
	\end{align}\\
where we set
	\begin{align}
		& \widetilde{K}_1\left(\theta_i,  \eta_i(\xi)\right)=\frac{1}{\lambda} \frac{\left(1-\xi^{\frac{1}{\lambda}}\right)^{-\mu}}{(1-\xi)^{-\mu}}\bar{K}_1\left(\theta_i,  \eta_i(\xi)\right),\notag\\
		& \widetilde{K}_2\left(\theta_i, \varepsilon \eta_i(\xi)\right)=\frac{1}{\lambda} \frac{\left(1-\xi^{\frac{1}{\lambda}}\right)^{-\mu}}{(1-\xi)^{-\mu}}\bar{K}_2\left(\theta_i,\varepsilon \eta_i(\xi)\right).\notag
	\end{align}
	
For the two integral terms in \eqref{eq_collocation_prime}, apply the $(N+1)$-point fractional $Jacobi-Gauss$ quadrature formula to estimate them, denoted by the nodes $\lbrace \xi_k\rbrace_{k=0}^{N}$ and the weights $\lbrace \omega_k\rbrace_{k=0}^{N}$, where $\alpha=-\mu,\beta=\frac{\mu+\gamma}{\lambda}-1$ are the related parameters. 
For \eqref{eq_collocation_origin} and \eqref{eq_collocation_eorigin}, denote $\lbrace \hat{\xi}_k\rbrace_{k=0}^{N}$ and $\lbrace \hat{\omega}_k\rbrace_{k=0}^{N}$ as the fractional $Jacobi-Gauss$ quadrature nodes and weights with the parameters $\alpha=0,\beta=\frac{1}{\lambda}-1$ respectively, we will obtain
	\begin{align}
	&\int_0^1(1-\xi)^{-\mu}\xi^{\frac{\mu+\gamma}{\lambda}-1}\widetilde{K}_1(\theta_i, \eta_i(\xi))\varphi( \eta_i(\xi))d\xi \approx \sum_{k=0}^N \widetilde{K}_1\left(\theta_i,  \eta_i\left(\xi_k\right)\right) \varphi\left( \eta_i\left(\xi_k\right)\right) \omega_k\label{eq_discrete_K1},\\
	&\int_0^1(1-\xi)^{-\mu}\xi^{\frac{\mu+\gamma}{\lambda}-1}\widetilde{K}_2(\theta_i,\varepsilon \eta_i(\xi))\varphi(\varepsilon \eta_i(\xi))d\xi \approx \sum_{k=0}^N \widetilde{K}_2\left(\theta_i, \varepsilon  \eta_i\left(\xi_k\right)\right) \varphi\left(\varepsilon  \eta_i\left(\xi_k\right)\right) \omega_k\label{eq_discrete_K2},\\
	&\frac{\theta_i}{\lambda} \int_0^1 \xi^{\frac{1}{\lambda}-1} \varphi^{\prime}\left( \eta_i(\xi)\right) d \xi\approx
	\sum_{k=0}^{N}\frac{\theta_i}{\lambda}\varphi^{\prime}\left( \eta_i(\hat{\xi}_k)\right)\hat{\omega}_k\label{eq_discrete_origin},\\
	&\frac{\varepsilon\theta_i}{\lambda} \int_0^1 \xi^{\frac{1}{\lambda}-1} \varphi^{\prime}\left( \varepsilon\eta_i(\xi)\right) d \xi\approx
	\sum_{k=0}^{N}\frac{\varepsilon\theta_i}{\lambda}\varphi^{\prime}\left(\varepsilon \eta_i(\hat{\xi}_k)\right)\hat{\omega}_k\label{eq_discrete_eorigin}.
	\end{align}
	
Combining the fractional $Jacobi$ collocation method, we can seek $\varphi_{N}\left(\theta\right)$ and $\varphi_{N}^{\ast}\left(\theta\right)$ such that $\lbrace \varphi_i\rbrace_{i=0}^{N},\lbrace \varphi_i^{\ast}\rbrace_{i=0}^{N}$ and $\lbrace v_i\rbrace_{i=0}^{N}$ satisfy the following collocation equations
	\begin{align}
		&\varphi_i^{\ast}=\tilde{p}_1\left(\theta_i\right) \varphi_i+\tilde{q}_1\left(\theta_i\right) v_i  +\tilde{g}_1(\theta_i)+\sum_{j=0}^N \varphi_j \left(\sum_{k=0}^N \widetilde{K}_1\left(\theta_i,  \eta_i\left(\xi_k\right)\right) F_{j,\lambda}\left( \eta_i\left(\xi_k\right)\right) \omega_k\right)\notag\\
		& \qquad+\sum_{j=0}^N \varphi_j\left(\sum_{k=0}^N \widetilde{K}_2\left(\theta_i, \varepsilon \eta_i\left(\xi_k\right)\right) F_{j,\lambda}\left(\varepsilon \eta_i\left(\xi_k\right)\right) \omega_k\right),\label{eq_collocation*_prime}\\
		&\varphi_i=\varphi_0+\sum_{j=0}^N  \varphi_j^*\left(\sum_{k=0}^N\frac{\theta_i}{\lambda} F_{j,\lambda}\left( \eta_i\left(\hat{\xi}_k\right)\right) \hat{\omega}_k\right),\label{eq_collocation*_origin}\\
		&v_i=\varphi_0+ \sum_{j=0}^N\varphi_j^*\left(\sum_{k=0}^N\frac{\varepsilon\theta_i}{\lambda}F_{j,\lambda}\left(\varepsilon \eta_i\left(\hat{\xi}_k\right)\right) \hat{\omega}_k\right)\label{eq_collocation*_eorigin}.
	\end{align}
After defining $\varphi_i,\varphi_i^{\ast}$ and $v_i$ to approxiamate the funtion value $\varphi\left(\theta_i\right),\varphi^{\prime}(\theta_i)$ and $\varphi\left(\varepsilon\theta_i\right)$ respectively, $\varphi\left(\theta\right),\varphi^{\prime}(\theta)$ are approxiated by the generalized $Lagrange$ interpolation polynomials according to \eqref{eq_Interpolation_operator_sum}, so we denote them as:
\begin{align}
	\varphi^{\prime}\left(\theta\right)\approx \varphi_{N}^{\ast}\left(\theta\right)=\sum_{j=0}^{N}\varphi_j^{\ast}F_{j,\lambda}(\theta),
	\varphi\left(\theta\right)\approx \varphi_{N}\left(\theta\right)=\sum_{j=0}^{N}\varphi_jF_{j,\lambda}(\theta).
\end{align}
where $\varphi_{N}^{\ast}\left(\theta\right)$ is not the exact derive of $\varphi_{N}\left(\theta\right)$.\\
We define $U_N^{\ast}=\lbrace\varphi_0^{\ast},\varphi_1^{\ast},\cdots,\varphi_N^{\ast}\rbrace^{T},
U_N=\lbrace\varphi_0,\varphi_1,\cdots,\varphi_N\rbrace^{T},
V_N=\lbrace v_0,v_1,\cdots,v_N\rbrace^{T}
$ so as to obtain the matrix form of \eqref{eq_collocation*_prime}-\eqref{eq_collocation*_eorigin}
	\begin{align}
		&U_N^{\ast}=(P+C+D)U_N+QV_N+G\label{eq_collocation*_matrix_prime},\\
		&U_N=U_0+EU_N^{\ast}\label{eq_collocation*_matrix_origin},\\
		&V_N=U_0+HU_N^{\ast}\label{eq_collocation*_matrix_eorigin}.
	\end{align}
where $P=diag\lbrace\tilde{p}_1\left(\theta_0\right),\tilde{p}_1\left(\theta_1\right),\cdots,\tilde{p}_1\left(\theta_N\right)\rbrace, Q=diag\lbrace\tilde{q}_1\left(\theta_0\right),\tilde{q}_1\left(\theta_1\right),\cdots,\tilde{q}_1\left(\theta_N\right)\rbrace,G=\\ \lbrace\tilde{g}_1\left(\theta_0\right),\tilde{g}_1\left(\theta_1\right),\cdots,\tilde{g}_1\left(\theta_N\right)\rbrace^{T},U_0=\lbrace\varphi_0,\varphi_0,\cdots,\varphi_0\rbrace^{T}$. The elements of $C,D,E,H$ are as follows:
	\begin{align}
		&C_{ij}=\sum_{k=0}^N \widetilde{K}_1\left(\theta_i,  \eta_i\left(\xi_k\right)\right) F_{j,\lambda}\left( \eta_i\left(\xi_k\right)\right) \omega_k,\notag\\
		&D_{ij}=\sum_{k=0}^N \widetilde{K}_2\left(\theta_i, \varepsilon \eta_i\left(\xi_k\right)\right) F_{j,\lambda}\left(\varepsilon \eta_i\left(\xi_k\right)\right)\omega_k,\notag\\
		&E_{ij}=\sum_{k=0}^N\frac{\theta_i}{\lambda} F_{j,\lambda}\left( \eta_i\left(\hat{\xi}_k\right)\right) \hat{\omega}_k,\notag\\
		&H_{ij}=\sum_{k=0}^N \frac{\varepsilon\theta_i}{\lambda}F_{j,\lambda}\left(\varepsilon \eta_i\left(\hat{\xi}_k\right)\right) \hat{\omega}_k.\notag
	\end{align}
The values of  $\lbrace \varphi_i\rbrace_{i=0}^{N}$ and $\lbrace \varphi_i^{\ast}\rbrace_{i=0}^{N}$ can be derived by solving the system of \eqref{eq_collocation*_matrix_prime}-\eqref{eq_collocation*_matrix_eorigin}, then we can get the numerical solutions accordingly.
\begin{remark}\label{remark_integral_equal}
	Since $F_{j,\lambda}(\theta)(j=0,1,\cdots,N)$ are fractional Jacobi polynomials of degree not exceeding $N$, $F_{j,\lambda}( \eta_i(\xi))$ are $j$-th Jacobi polynomials with respect to $\xi$ due to the definition of $F_{j,\lambda}( \theta)$. Thus, there is a relationship between the following integral and quadrature formula:
	\begin{align}
		\int_0^{\theta_i} \varphi_{N}^{\ast}(\eta) d\eta
		=&\int_0^{\theta_i}\sum_{j=0}^N\varphi_j^{\ast}F_{j,\lambda}(\eta)d\eta
		=\frac{\theta_i}{\lambda}\int_0^{1}\xi^{\frac{1}{\lambda}-1}\sum_{j=0}^N\varphi_j^{\ast}F_{j,\lambda}\left( \eta_i(\xi)\right)d\xi\notag\\
		=&\sum_{j=0}^N  \varphi_j^*\left(\sum_{k=0}^N\frac{\theta_i}{\lambda} F_{j,\lambda}\left( \eta_i\left(\hat{\xi}_k\right)\right) \hat{\omega}_k\right)
		=\left(\frac{\theta_i}{\lambda}, \varphi_N^{\ast}\left( \eta_i(\cdot)\right)\right) _{N,\omega^{0, \frac{1}{\lambda}-1,1}},\label{eq_integral_equal}
	\end{align}
that is also because of $\varphi_N\left( \eta_i\left(\xi\right)\right)=\sum_{j=0}^{N}{\varphi_j}^{\ast}F_{j,\lambda}\left( \eta_i\left(\xi\right)\right)\in \mathcal{P}_N^1$.  The case with $\varepsilon$ can be similarly discussed.
\end{remark}

\section{Convergence Analysis}\label{section_Convergence Analysis}
In this section, we will continue to analyse the numerical scheme in Section \ref{section_Numerical Scheme} in order to derive the convergence results more intuitively. Then exponential convergence will be rigorously proved,  i.e., the spectral accuracy can be showed for the proposed approximations. \\
First of all, based on \eqref{eq_inner_continue}, rewrite \eqref{eq_collocation_prime}-\eqref{eq_collocation_eorigin} into the form of continuous inner products
		\begin{align}
		&\varphi^{\prime}(\theta_i)=\tilde{p}_1(\theta_i) \varphi(\theta_i)+\tilde{q}_1(\theta_i)\varphi(\varepsilon \theta_i)+\tilde{g}_1(\theta_i)+
		\left(\widetilde{K}_1(\theta_i, \eta_i(\cdot)), \varphi\left( \eta_i(\cdot)\right)\right)_ {\omega^{-\mu,\frac{\mu+\gamma}{\lambda}-1,1}}\notag\\
		&\qquad\quad+	\left(\widetilde{K}_2(\theta_i,\varepsilon \eta_i(\cdot)), \varphi\left(\varepsilon \eta_i(\cdot)\right)\right)_ {\omega^{-\mu,\frac{\mu+\gamma}{\lambda}-1,1}},\label{eq_inner_continue_prime}\\
		&\varphi\left(\theta_i\right)=\varphi_0+\left(\frac{\theta_i}{\lambda}, \varphi^{\prime}\left( \eta_i(\cdot)\right)\right) _{\omega^{0, \frac{1}{\lambda}-1,1}}\label{eq_inner_continue_origin},\\
		&\varphi\left(\varepsilon\theta_i\right)=\varphi_0+\left(\frac{\varepsilon\theta_i}{\lambda}, \varphi^{\prime}\left(\varepsilon \eta_i(\cdot)\right)\right) _{\omega^{0, \frac{1}{\lambda}-1,1}}.\label{eq_inner_continue_eorigin}
	\end{align}
Derived from \eqref{eq_inner_discrete}, \eqref{eq_collocation*_prime}-\eqref{eq_collocation*_eorigin} can be changed into discrete inner products
		\begin{align}
		&\varphi_i^{\ast}=\tilde{p}_1\left(\theta_i\right) \varphi_i+\tilde{q}_1\left(\theta_i\right) v_i  +\tilde{g}_1(\theta_i)+	\left(\widetilde{K}_1(\theta_i, \eta_i(\cdot)), \varphi_N\left( \eta_i(\cdot)\right)\right)_ {N,\omega^{-\mu,\frac{\mu+\gamma}{\lambda}-1,1}}\notag\\
		&\qquad+\left(\widetilde{K}_2(\theta_i,\varepsilon \eta_i(\cdot)), \varphi_N\left(\varepsilon \eta_i(\cdot)\right)\right)_ {N,\omega^{-\mu,\frac{\mu+\gamma}{\lambda}-1,1}},\label{eq_inner_discrete_prime}\\
		&\varphi_i=\varphi_0+\left(\frac{\theta_i}{\lambda}, \varphi_N^{\ast}\left( \eta_i(\cdot)\right)\right) _{N,\omega^{0, \frac{1}{\lambda}-1,1}},\label{eq_inner_discrete_origin}\\
		&v_i= \varphi_0+\left(\frac{\varepsilon\theta_i}{\lambda}, \varphi_{N}^{\ast}\left(\varepsilon \eta_i(\cdot)\right)\right) _{N,\omega^{0, \frac{1}{\lambda}-1,1}} .\label{eq_inner_discrete_eorigin}
	\end{align}
Add on both sides of the equation \eqref{eq_inner_discrete_prime} by 
	\begin{align}
		\theta_i^{-\gamma} \int_0^{\theta_i}(\theta_i-\eta)^{-\mu} \eta^{\mu+\gamma-1} \bar{K}_1(\theta_i, \eta) \varphi_N(\eta) d \eta=\left(\widetilde{K}_1(\theta_i, \eta_i(\cdot)), \varphi_N\left( \eta_i(\cdot)\right)\right)_ {\omega^{-\mu,\frac{\mu+\gamma}{\lambda}-1,1}},
	\end{align}
 and 
 \begin{align}
 	\theta_i^{-\gamma} \int_0^{\theta_i}(\theta_i-\eta)^{-\mu}\eta^{\mu+\gamma-1} \bar{K}_2(\theta_i, \varepsilon \eta)\varphi_N(\varepsilon \eta) d \eta=\left(\widetilde{K}_2(\theta_i,\varepsilon \eta_i(\cdot)), \varphi_N\left(\varepsilon \eta_i(\cdot)\right)\right)_ {\omega^{-\mu,\frac{\mu+\gamma}{\lambda}-1,1}},
 \end{align}
 as previously derived.
Because of Remark \ref{remark_integral_equal}, \eqref{eq_inner_discrete_origin} and \eqref{eq_inner_discrete_eorigin} with \eqref{eq_inner_discrete_prime} are equipped with
		\begin{align}
		&\varphi_i^{\ast}=\tilde{p}_1\left(\theta_i\right) \varphi_i+\tilde{q}_1\left(\theta_i\right) v_i
		+\tilde{g}_1(\theta_i)
		+\theta_i^{-\gamma} \int_0^{\theta_i}(\theta_i-\eta)^{-\mu} \eta^{\mu+\gamma-1} \bar{K}_1(\theta_i, \eta) \varphi_N(\eta) d \eta ,\notag\\
		&\qquad+\theta_i^{-\gamma} \int_0^{\theta_i}(\theta_i-\eta)^{-\mu}\eta^{\mu+\gamma-1} \bar{K}_2(\theta_i, \varepsilon \eta)\varphi_N(\varepsilon \eta) d \eta-I_{i,1}-I_{i,2},\label{eq_thetai_numerical_prime}\\
		&\varphi_i=\varphi_0+\int_0^{\theta_i} \varphi_{N}^{\ast}(\eta) d\eta,\label{eq_thetai_numerical_origin}\\
		&v_i=\varphi_0+\varepsilon\int_0^{\theta_i} \varphi_{N}^{\ast}(\varepsilon\eta) d\eta.\label{eq_thetai_numerical_eorigin}
	\end{align}
where we set
	\begin{align}
		&I_{i,1}=\left(\widetilde{K}_1(\theta_i, \eta_i(\cdot)), \varphi_N\left( \eta_i(\cdot)\right)\right)_ {\omega^{-\mu,\frac{\mu+\gamma}{\lambda}-1,1}}-\left(\widetilde{K}_1(\theta_i, \eta_i(\cdot)), \varphi_N\left( \eta_i(\cdot)\right)\right)_ {N,\omega^{-\mu,\frac{\mu+\gamma}{\lambda}-1,1}},\label{eq_Ii1}\\
		&I_{i,2}=\left(\widetilde{K}_2(\theta_i,\varepsilon \eta_i(\cdot)), \varphi_N\left(\varepsilon \eta_i(\cdot)\right)\right)_ {\omega^{-\mu,\frac{\mu+\gamma}{\lambda}-1,1}}-	\left(\widetilde{K}_2(\theta_i,\varepsilon \eta_i(\cdot)), \varphi_N\left(\varepsilon \eta_i(\cdot)\right)\right)_ {N,\omega^{-\mu,\frac{\mu+\gamma}{\lambda}-1,1}}.\label{eq_Ii2}
	\end{align}
They can be developed by Lemma \ref{lemma_Ii1Ii2} that
	\begin{align}
		& \left|I_{i, 1}\right| \leq C N^{-m}\left\|\partial_\theta^m \widetilde{K}_1\left(\theta_i,  \eta_i(\cdot)\right)\right\|_ {0,\omega^{m-\mu,{m+\frac{\mu+\gamma}{\lambda}}-1,1}}
		\left\|\varphi_N\left( \eta_i\left(\cdot\right)\right)\right\|_ {0,\omega^{-\mu,\frac{\mu+\gamma}{\lambda}-1,1}},\label{eq_E1_Ii1_absolute}\\
		& \left|I_{i, 2}\right| \leq C N^{-m}\left\|\partial_\theta^m \widetilde{K}_2\left(\theta_i, \varepsilon \eta_i(\cdot)\right)\right\|_ {0,\omega^{m-\mu,{m+\frac{\mu+\gamma}{\lambda}}-1,1}}
		\left\|\varphi_N\left(\varepsilon \eta_i\left(\cdot\right)\right)\right\|_ {0,\omega^{-\mu,\frac{\mu+\gamma}{\lambda}-1,1}}.\label{eq_E2_Ii2_absolute}
	\end{align}
We now subtract \eqref{eq_thetai_numerical_prime} from \eqref{eq_thetai_collocation_prime},  \eqref{eq_thetai_numerical_origin} from \eqref{eq_thetai_collocation_origin}, and \eqref{eq_thetai_numerical_eorigin} from \eqref{eq_thetai_collocation_eorigin} to deduce the error equations
	\begin{align}
		&\varphi^{\prime}(\theta_i)-\varphi_i^{\ast}=\tilde{p}_1(\theta_i) \left(\varphi(\theta_i)-\varphi_i\right)+\tilde{q}_1(\theta_i)\left(\varphi(\varepsilon \theta_i)-v_i\right)\notag\\
		&\qquad\qquad\qquad+\theta_i^{-\gamma} \int_0^{\theta_i}(\theta_i-\eta)^{-\mu} \eta^{\mu+\gamma-1} \bar{K}_1(\theta_i, \eta) e(\eta) d \eta\notag\\
		&\qquad\qquad\qquad+\theta_i^{-\gamma} \int_0^{\theta_i}(\theta_i-\eta)^{-\mu}\eta^{\mu+\gamma-1} \bar{K}_2(\theta_i, \varepsilon \eta)e(\varepsilon \eta) d \eta+I_{i,1}+I_{i,2},\label{eq_subtraction1_prime}\\
		&\varphi(\theta_i)-\varphi_i=\int_0^{\theta_i} e^{\ast}(\eta) d\eta,\label{eq_subtraction1_origin}\\
		&\varphi(\varepsilon \theta_i)-v_i=\varepsilon\int_0^{\theta_i} e^{\ast}(\varepsilon\eta) d\eta.\label{eq_subtraction1_eorigin}
	\end{align}
where $e(\theta)=\varphi(\theta)-\varphi_N(\theta),e^{\ast}(\theta)=\varphi^{\prime}(\theta)-\varphi_N^{\ast}(\theta)$. Substituting \eqref{eq_subtraction1_origin} and \eqref{eq_subtraction1_eorigin} into \eqref{eq_subtraction1_prime} yields
	\begin{align}
		\varphi^{\prime}(\theta_i)-\varphi_i^{\ast}=&\tilde{p}_1(\theta_i) \int_0^{\theta_i} e^{\ast}(\eta) d\eta+\varepsilon\tilde{q}_1(\theta_i)\int_0^{\theta_i} e^{\ast}(\varepsilon\eta) d\eta\notag\\
		&+\theta_i^{-\gamma} \int_0^{\theta_i}(\theta_i-\eta)^{-\mu} \eta^{\mu+\gamma-1} \bar{K}_1(\theta_i, \eta) e(\eta) d \eta\notag\\
		&+\theta_i^{-\gamma} \int_0^{\theta_i}(\theta_i-\eta)^{-\mu}\eta^{\mu+\gamma-1} \bar{K}_2(\theta_i, \varepsilon \eta)e(\varepsilon \eta) d \eta+I_{i,1}+I_{i,2}.\label{eq_subtraction2_prime}
	\end{align}
After multiplying $F_{i,\lambda}(\theta)$ on both sides of \eqref{eq_subtraction2_prime}, \eqref{eq_subtraction1_origin} and \eqref{eq_subtraction1_eorigin}, then summing up from $i=0$ to $N$, it changes into
	\begin{align}
		I_{N, \lambda}^{\alpha, \beta} \varphi^{\prime}(\theta)-\varphi_N^*(\theta)=
		& I_{N, \lambda}^{\alpha, \beta}\left(\tilde{p}_1\left(\theta\right) \int_0^\theta e^*(\eta) d \eta\right)
		+ \varepsilon I_{N, \lambda}^{\alpha, \beta}\left(\tilde{q}_1\left(\theta\right) \int_0^\theta e^*(\varepsilon \eta) d \eta\right)\notag\\
		& +I_{N, \lambda}^{\alpha, \beta}\left(\theta^{-\gamma} \int_0^\theta\left(\theta-\eta\right)^{-\mu} \eta^{\mu+\gamma-1} \bar{K}_1(\theta, \eta) e(\eta) d \eta\right) \notag\\
		&+I_{N, \lambda}^{\alpha, \beta}\left(\theta^{-\gamma} \int_0^\theta\left(\theta-\eta\right)^{-\mu} \eta^{\mu+\gamma-1} \bar{K}_2(\theta,\varepsilon \eta)e(\varepsilon \eta) d \eta\right) \notag\\
		&+ \sum_{i=0}^N I_{i, 1} F_{i,\lambda}(\theta)+\sum_{i=0}^N I_{i, 2} F_{i,\lambda}(\theta)\label{eq_sum1_prime}\\
		I_{N, \lambda}^{\alpha \beta} \varphi(\theta)-\varphi_N(\theta)= & I_{N, \lambda}^{\alpha, \beta}\left(\int_0^\theta e^*(\eta) d \eta\right)\label{eq_sum1_origin},\\
		I_{N, \lambda}^{\alpha \beta} \varphi(\varepsilon\theta)-\varphi_N(\varepsilon\theta)= & I_{N, \lambda}^{\alpha, \beta}\left(\varepsilon\int_0^\theta e^*(\varepsilon\eta) d \eta\right)\label{eq_sum1_eorigin}.
	\end{align}
Adding and subtracting $\varphi^{\prime}(\theta)$ to left side of \eqref{eq_sum1_prime}, $\varphi(\theta)$ to left side of \eqref{eq_sum1_origin} and $\varphi(\varepsilon\theta)$ to left side of \eqref{eq_sum1_eorigin}, respectively yield the following results:
	\begin{align}
		e^{\ast}(\theta)=
		&\tilde{p}_1\left(\theta\right) \int_0^\theta e^*(\eta) d \eta
		+ \varepsilon \tilde{q}_1\left(\theta\right) \int_0^\theta e^*(\varepsilon \eta) d \eta+\theta^{-\gamma} \int_0^\theta\left(\theta-\eta\right)^{-\mu} \eta^{\mu+\gamma-1} \bar{K}_1(\theta, \eta) e(\eta) d \eta \notag\\
		&+\theta^{-\gamma} \int_0^\theta\left(\theta-\eta\right)^{-\mu} \eta^{\mu+\gamma-1} \bar{K}_2(\theta,\varepsilon \eta)e(\varepsilon \eta) d\eta
		+\sum_{p=1}^{7}E_p(\theta),\label{eq_sum*_prime}\\
		e(\theta)=&\int_0^\theta e^*(\eta) d \eta+E_8(\theta)+E_9(\theta),\label{eq_sum*_origin}\\
		e(\varepsilon\theta)=&\varepsilon\int_0^\theta e^*(\varepsilon\eta) d \eta+E_8(\varepsilon\theta)+E_9(\varepsilon\theta).\label{eq_sum*_eorigin}
	\end{align}
where
	\begin{align}
		&E_1(\theta)=\sum_{i=0}^N I_{i, 1} F_{i,\lambda}(\theta),  
		E_2(\theta)=\sum_{i=0}^N I_{i, 2} F_{i,\lambda}(\theta),
		E_3(\theta)= \varphi^{\prime}(\theta)-I_{N, \lambda}^{\alpha, \beta} \varphi^{\prime}(\theta),\notag\\
		&E_4(\theta)= I_{N, \lambda}^{\alpha, \beta}\left(\tilde{p}_1\left(\theta\right) \int_0^\theta e^*(\eta) d \eta\right)-\tilde{p}_1\left(\theta\right) \int_0^\theta e^*(\eta) d \eta,\notag\\
		&E_5(\theta)=\varepsilon I_{N, \lambda}^{\alpha, \beta}\left(\tilde{q}_1\left(\theta\right) \int_0^\theta e^*(\varepsilon\eta) d \eta\right)- \varepsilon \tilde{q}_1\left(\theta\right) \int_0^\theta e^*(\varepsilon \eta) d \eta,\notag\\
		&E_6(\theta)=I_{N, \lambda}^{\alpha, \beta}\left(\theta^{-\gamma} \int_0^\theta\left(\theta-\eta\right)^{-\mu} \eta^{\mu+\gamma-1} \bar{K}_1(\theta, \eta) e(\eta) d \eta\right)\notag\\
		&\qquad\qquad-\theta^{-\gamma} \int_0^\theta\left(\theta-\eta\right)^{-\mu} \eta^{\mu+\gamma-1} \bar{K}_1(\theta, \eta) e(\eta) d \eta ,\notag
	\end{align}
	\begin{align}
		&E_7(\theta)=I_{N, \lambda}^{\alpha, \beta}\left(\theta^{-\gamma} \int_0^\theta\left(\theta-\eta\right)^{-\mu} \eta^{\mu+\gamma-1} \bar{K}_2(\theta,\varepsilon \eta)e(\varepsilon \eta) d \eta\right)\notag\\
		&\qquad\qquad-\theta^{-\gamma} \int_0^\theta\left(\theta-\eta\right)^{-\mu} \eta^{\mu+\gamma-1} \bar{K}_2(\theta,\varepsilon \eta)e(\varepsilon \eta) d\eta,\notag\\
		&E_8(\theta)=\varphi(\theta)-I_{N, \lambda}^{\alpha \beta} \varphi(\theta),
		E_9(\theta)= I_{N, \lambda}^{\alpha, \beta}\left(\int_0^\theta e^*(\eta) d \eta\right)-\int_0^\theta e^*(\eta) d \eta.\notag
	\end{align}
\begin{theorem}\label{theorem_Gronwall}
The error of $e^{\ast}(\theta)$ and $e(\theta)$ can be bounded by $E_p(\theta)(p=1,2,\cdots,9)$, which means
	\begin{align}
		\|{e}^{*}(\theta)\|_{\infty} &\leq C\sum_{p=1}^9\left\|E_p(\theta)\right\|_{\infty},\label{theorem_normcontrol_e*_infty}\\
	\|e(\theta)\|_{\infty} &\leq C \left(\left\|e^*(\theta)\right\|_{\infty}+\left\|E_8(\theta)\right\|_{\infty}+\left\|E_9(\theta)\right\|_{\infty}\right).\label{theorem_normcontrol_e_infty}
	\end{align}
	\begin{align}
		\left\|e^*(\theta)\right\|_{0,\omega^{\alpha,\beta,1}} 
		&\leq C\left(\sum_{p=1}^{9}\left\|E_p(\theta)\right\|_{0,\omega^{\alpha,\beta,1}}+\left\|E_8(\theta)\right\|_{\infty}+\left\|E_9(\theta)\right\|_{\infty} \right)\label{theorem_normcontrol_e*_w}\\
		\left\|e(\theta)\right\|_{0,\omega^{\alpha,\beta,1}}  &\leq C
		\left(\left\|e^*(\theta)\right\|_{0,\omega^{\alpha,\beta,1}}+\left\|E_8(\theta)\right\|_{0,\omega^{\alpha,\beta,1}}+\left\|E_9(\theta)\right\|_{0,\omega^{\alpha,\beta,1}} \right)\label{theorem_normcontrol_e_w}
	\end{align}
where $e(\theta)=\varphi(\theta)-\varphi_N(\theta),e^{\ast}(\theta)=\varphi^{\prime}(\theta)-\varphi_N^{\ast}(\theta)$. 
\end{theorem}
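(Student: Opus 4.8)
The plan is to reduce the coupled system \eqref{eq_sum*_prime}, \eqref{eq_sum*_origin}, \eqref{eq_sum*_eorigin} to a single Volterra integral inequality for $e^{*}$ and then apply the Gronwall-type estimate of Lemma~\ref{lemma_Chen_Lemma3.5_norm}. First I would substitute \eqref{eq_sum*_origin} and \eqref{eq_sum*_eorigin} into \eqref{eq_sum*_prime}, i.e.\ replace $e(\eta)$ by $\int_0^{\eta}e^{*}(\zeta)\,d\zeta+E_8(\eta)+E_9(\eta)$ and $e(\varepsilon\eta)$ by $\varepsilon\int_0^{\eta}e^{*}(\varepsilon\zeta)\,d\zeta+E_8(\varepsilon\eta)+E_9(\varepsilon\eta)$ inside the two weakly singular terms. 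In the pieces that carry $\int_0^{\cdot}e^{*}$, interchanging the order of integration (Fubini) turns the weakly singular operator into a Volterra operator with a uniformly bounded kernel: for the first kernel one gets $\theta^{-\gamma}\int_0^{\theta}(\theta-\eta)^{-\mu}\eta^{\mu+\gamma-1}\bar{K}_1(\theta,\eta)\big(\int_0^{\eta}e^{*}\big)\,d\eta=\int_0^{\theta}H_1(\theta,\zeta)e^{*}(\zeta)\,d\zeta$ with $|H_1(\theta,\zeta)|=\big|\theta^{-\gamma}\int_{\zeta}^{\theta}(\theta-\eta)^{-\mu}\eta^{\mu+\gamma-1}\bar{K}_1(\theta,\eta)\,d\eta\big|\le C$, the bound following from the identity $\theta^{-\gamma}\int_0^{\theta}(\theta-\eta)^{-\mu}\eta^{\mu+\gamma-1}\,d\eta=B(1-\mu,\mu+\gamma)$; the delay kernel is treated identically after the substitution $w=\varepsilon\eta$, where $0<\varepsilon\le1$ is used so that $\int_0^{\varepsilon\eta}|e^{*}(w)|\,dw\le\int_0^{\eta}|e^{*}(w)|\,dw$.

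Next I would bound the remaining terms. The contributions $\tilde p_1(\theta)\int_0^{\theta}e^{*}$ and $\varepsilon\tilde q_1(\theta)\int_0^{\theta}e^{*}(\varepsilon\cdot)$ are dominated by $C\int_0^{\theta}|e^{*}(\eta)|\,d\eta$ since $\tilde p_1,\tilde q_1\in C[0,1]$ (again using $w=\varepsilon\eta$ for the delay term), while the $E_8,E_9$ pieces extracted from the weakly singular terms are dominated by $C\big(\|E_8\|_{\infty}+\|E_9\|_{\infty}\big)$, using once more the Beta-function identity to bound $\theta^{-\gamma}\int_0^{\theta}(\theta-\eta)^{-\mu}\eta^{\mu+\gamma-1}\,d\eta$. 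Collecting all of this, and recalling that $\varphi\in C^{1}[0,T]$ by the regularity theorem so that $e,e^{*}$ are continuous on $[0,1]$, one obtains a pointwise inequality $|e^{*}(\theta)|\le J(\theta)+L\int_0^{\theta}|e^{*}(\eta)|\,d\eta$ for $0\le\theta\le1$, with $J(\theta)\le\sum_{p=1}^{7}|E_p(\theta)|+C\big(\|E_8\|_{\infty}+\|E_9\|_{\infty}\big)$. Lemma~\ref{lemma_Chen_Lemma3.5_norm} then yields, simultaneously, $\|e^{*}\|_{\infty}\le C\|J\|_{\infty}$ and $\|e^{*}\|_{0,\omega^{\alpha,\beta,1}}\le C\|J\|_{0,\omega^{\alpha,\beta,1}}$. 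The first gives $\|e^{*}\|_{\infty}\le C\sum_{p=1}^{9}\|E_p\|_{\infty}$, i.e.\ \eqref{theorem_normcontrol_e*_infty}; for the second, since $\|1\|_{0,\omega^{\alpha,\beta,1}}<\infty$, we get $\|J\|_{0,\omega^{\alpha,\beta,1}}\le C\big(\sum_{p=1}^{9}\|E_p\|_{0,\omega^{\alpha,\beta,1}}+\|E_8\|_{\infty}+\|E_9\|_{\infty}\big)$ — the $L^{\infty}$ norms of $E_8,E_9$ being the price of their entering through the weakly singular operators — which is \eqref{theorem_normcontrol_e*_w}.

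Finally, the estimates for $e$ come straight out of \eqref{eq_sum*_origin}: $|e(\theta)|\le\int_0^{\theta}|e^{*}(\eta)|\,d\eta+|E_8(\theta)|+|E_9(\theta)|$. Taking the supremum over $\theta\in[0,1]$ and using $\theta\le1$ gives \eqref{theorem_normcontrol_e_infty}; taking the $L^2_{\omega^{\alpha,\beta,1}}$-norm and invoking the generalized Hardy inequality (Lemma~\ref{lemma_Hardy}) to control $\big\|\int_0^{\theta}e^{*}(\eta)\,d\eta\big\|_{0,\omega^{\alpha,\beta,1}}$ by $C\|e^{*}\|_{0,\omega^{\alpha,\beta,1}}$ gives \eqref{theorem_normcontrol_e_w}. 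I expect the delicate points to be purely technical: justifying the Fubini interchange and checking that the reduced kernels $H_i(\theta,\zeta)$ are bounded uniformly up to $\theta=0$ (this is where $\mu+\gamma\ge1$ together with $0\le\mu<1$ enters, to cancel the $\theta^{-\gamma}$ factor), and carrying the proportional-delay substitution $w=\varepsilon\eta$ through every term so that all integrals reduce to the $\int_0^{\theta}(\cdot)$ form required by Lemmas~\ref{lemma_Gronwall} and \ref{lemma_Chen_Lemma3.5_norm}; verifying the balance condition of Lemma~\ref{lemma_Hardy} for the Jacobi weight $\omega^{\alpha,\beta,1}$ in the admissible parameter range is the only place a short computation is genuinely needed.
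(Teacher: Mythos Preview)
Your proposal is correct and follows essentially the same route as the paper: substitute \eqref{eq_sum*_origin}--\eqref{eq_sum*_eorigin} into \eqref{eq_sum*_prime}, interchange the order of integration in the weakly singular terms to obtain bounded Volterra kernels via the Beta identity $\theta^{-\gamma}\int_0^{\theta}(\theta-\eta)^{-\mu}\eta^{\mu+\gamma-1}\,d\eta=B(1-\mu,\mu+\gamma)$, reduce the delay integrals to $\int_0^{\theta}$ form, apply Lemma~\ref{lemma_Chen_Lemma3.5_norm} for both norms, and read off the $e$-estimates directly from \eqref{eq_sum*_origin} with Lemma~\ref{lemma_Hardy} for the weighted $L^2$ case. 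The paper keeps the $E_8,E_9$ contributions inside an auxiliary function $J(\theta)$ and bounds $\|J\|$ at the end, whereas you extract them immediately, but this is a cosmetic difference only.
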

\begin{proof}
Substituting \eqref{eq_sum*_origin} and \eqref{eq_sum*_eorigin} into \eqref{eq_sum*_prime} yields an inequality concerning $e^{\ast}(\theta)$ and $e(\theta)$:
	\begin{align}
		e^*(\theta)= & \tilde{p}_1(\theta) \int_0^\theta e^*(\eta) d \eta+e \tilde{q}_1(\theta) \int_0^\theta e^*(\varepsilon \eta) d \eta   +\sum_{p=1}^7 E_p(\theta) \notag\\
		& +\theta^{-\gamma} \int_0^\theta(\theta-\eta)^{-\mu}\eta^{\mu+\gamma-1} \bar{K}_1(\theta, \eta) e(\eta) d \eta+\theta^{-\gamma} \int_0^\theta(\theta-\eta)^{-\mu} \eta^{\mu+\gamma-1} \bar{K}_2(\theta, \varepsilon \eta) e(\varepsilon \eta) d \eta\notag \\
		= & \tilde{p}_1(\theta) \int_0^\theta e^*(\eta) d \eta+\tilde{q}_1(\theta) \int_0^{\varepsilon \theta} e^*(\eta) d \eta\notag \\
		& +\theta^{-\gamma} \int_0^\theta(\theta-\eta)^{-\mu} \eta^{\mu+\gamma-1} \bar{K}_1(\theta, \eta)\left(\int_0^\eta e^*(\sigma)\right) d \eta\notag\\
		&+\theta^{-\gamma} \int_0^\theta(\theta-\eta)^{-\mu}\eta^{\mu+\gamma-1}  \bar{K}_2(\theta,\varepsilon\eta)\left(\int_0^{\varepsilon \sigma} e^*(\sigma) d \sigma\right)d\eta+J(\theta)\notag 
	\end{align}
	\begin{align}
		= & \tilde{p}_1(\theta) \int_0^\theta e^*(\eta) d \eta+\tilde{q}_1(\theta) \int_0^{\varepsilon \theta} e^*(\eta) d \eta\notag \\
		& +\theta^{-\gamma} \int_0^\theta\left(\int_\eta^\theta(\theta-\sigma)^{-\mu}\sigma^{\mu+\gamma-1}  \bar{K}_1(\theta, \sigma) d \sigma\right) e^*(\eta) d \eta \notag\\
		& +\theta^{-\gamma} \int_0^{\varepsilon \theta}\left(\int_{\frac{\eta}{\varepsilon}}^\theta(\theta-\sigma)^{-\mu}\sigma^{\mu+\gamma-1}  \bar{K}_2(\theta, \varepsilon\sigma) d \sigma\right) e^*(\eta) d \eta+J(\theta)\notag\\
		=&\theta^{-\gamma} \int_0^\theta\left(\int_\eta^\theta(\theta-\sigma)^{-\mu}\sigma^{\mu+\gamma-1}  \bar{K}_1(\theta, \sigma) d \sigma+\theta^{\gamma}\tilde{p}_1(\theta)\right) e^*(\eta) d \eta \notag\\
		& +\theta^{-\gamma} \int_0^{\varepsilon \theta}\left(\int_{\frac{\eta}{\varepsilon}}^\theta(\theta-\sigma)^{-\mu}\sigma^{\mu+\gamma-1}  \bar{K}_2(\theta, \varepsilon\sigma) d \sigma+\theta^{\gamma}\tilde{q}_1(\theta)\right) e^*(\eta) d \eta+J(\theta).\label{eq_Gronwall_origin}
	\end{align}
where
	\begin{align}
		J(\theta)=&\theta^{-\gamma} \int_0^\theta(\theta-\eta)^{-\mu} \sigma^{\mu+\gamma-1} \bar{K}_1(\theta, \eta)\left(E_8(\eta)+E_9(\eta)\right) d \eta\notag\\
		&+\theta^{-\gamma} \int_0^\theta(\theta-\eta)^{-\mu}\sigma^{\mu+\gamma-1}  \bar{K}_2(\theta,\varepsilon\eta)\left(E_8(\varepsilon\eta)+E_9(\varepsilon\eta)\right) d\eta
		+\sum_{p=1}^7 E_p(\theta).\label{eq_Gronwall_J}
	\end{align}
Now introduce the area $D:\lbrace(\theta,\sigma):0 \leq \sigma \leq \theta,\theta \in [0,1]\rbrace$,where we denote $\underset{(\theta,\sigma) \in D}{\max}\left|\bar{K}_1(\theta,\sigma)\right|=\bar{K}_1^{max}$, 
$\underset{(\theta,\sigma) \in D}{\max}\left|\bar{K}_2(\theta,\varepsilon\sigma)\right|=\bar{K}_2^{max}$, 
$\underset{\theta \in [0,1]}{\max}\left|\tilde{p}_1(\theta)\right|=p^{max}$, $\underset{\theta \in [0,1]}{\max}\left|\tilde{q}_1(\theta)\right|=q^{max}$, then we can obtain
	\begin{align}
		&\left|\theta^{-\gamma}\left(\int_\eta^\theta(\theta-\sigma)^{-\mu} \sigma^{\mu+\gamma-1} \bar{K}_1(\theta, \sigma) d \sigma+\theta^{\gamma}\tilde{p}_1(\theta)\right)\right| \notag\\
		\leq &\left|\theta^{-\gamma}\left(\int_0^\theta(\theta-\sigma)^{-\mu} \sigma^{\mu+\gamma-1} \bar{K}_1(\theta, \sigma) d \sigma+\theta^{\gamma}\tilde{p}_1(\theta)\right)\right| \notag\\
		\leq& \left|B(1-\mu,\mu+\gamma)\bar{K}_1^{max}+\tilde{p}_1(\theta)\right|\notag\\
		\leq&\left|B(1-\mu,\mu+\gamma)\bar{K}_1^{max}\right|+p^{max}\notag\\
		\equiv &C_1.\label{eq_Gronwall_C1}
	\end{align}
	\begin{align}
		&\left|\theta^{-\gamma}\left(\int_{\frac{\eta}{\varepsilon}}^\theta(\theta-\sigma)^{-\mu}\sigma^{\mu+\gamma-1}  \bar{K}_2(\theta, \varepsilon\sigma) d \sigma+\theta^{\gamma}\tilde{q}_1(\theta)\right)\right|\notag\\
		\leq&\bar{K}_2^{max}\left|\theta^{-\gamma}\left(\int_{\frac{\eta}{\varepsilon}}^\theta(\theta-\sigma)^{-\mu}\sigma^{\mu+\gamma-1}d \sigma\right)\right|+q^{max}\notag\\
		\leq&\bar{K}_2^{max}\left|\theta^{-\gamma}\left(\int_{0}^\theta(\theta-\sigma)^{-\mu}\sigma^{\mu+\gamma-1}d \sigma\right)\right|+q^{max}\notag\\
		\leq &\bar{K}_2^{max}B(1-\mu, \mu+\gamma)+q^{max} \notag\\
		\equiv&C_2.\label{eq_Gronwall_C2}
	\end{align}
where $0 \leq \frac{\eta}{\varepsilon} \leq \theta$.
According to \eqref{eq_Gronwall_C1} and \eqref{eq_Gronwall_C2}, take the absolute value of both sides of \eqref{eq_Gronwall_origin}. By the triangle inequality, we can obtain
	\begin{align}
		\left|e^*(\theta)\right| & \leq C_1 \int_0^\theta\left|e^*(\eta)\right| d \eta+C_2 \int_0^{\varepsilon \theta}\left|e^*(\eta)\right| d \eta+|J(\theta)| \notag\\ 
		& \leq\left(C_1+C_2\right) \int_0^\theta\left|e^*(\eta)\right| d \eta+|J(\theta)|.\label{eq_Gronwall_e*}
	\end{align}
It follows from Lemma \ref{lemma_Chen_Lemma3.5_norm} that
	\begin{align}
		\|{e}^{*}(\theta)\|_{\infty} \leq C\|J(\theta)\|_{\infty}.
	\end{align}
Based on the relationship between $e(\theta)$ and $e^*(\theta)$ in \eqref{eq_sum*_origin}, we can deduce
	\begin{align}
		\|e(\theta)\|_{\infty} \leq \|{e}^{*}(\theta)\|_{\infty}+\left\|E_8(\theta)\right\|_{\infty}+\left\|E_9(\theta)\right\|_{\infty} .\label{e*_e}
	\end{align}
It can be determined from \eqref{eq_Gronwall_J} that
	\begin{align}
		\|J(\theta)\|_{\infty} \leq C \sum_{p=1}^9\left\|E_p(\theta)\right\|_{\infty}.
	\end{align}
In conclusion,
	\begin{align}
		&\|{e}^{*}(\theta)\|_{\infty} \leq C\sum_{p=1}^9\left\|E_p(\theta)\right\|_{\infty},\\
		&\|e(\theta)\|_{\infty} \leq C \left(\left\|e^*(\theta)\right\|_{\infty}+\left\|E_8(\theta)\right\|_{\infty}+\left\|E_9(\theta)\right\|_{\infty}\right).
	\end{align}
By virtue of \eqref{eq_sum*_origin}, \eqref{eq_Gronwall_J}, \eqref{eq_Gronwall_e*}, Lemma \ref{lemma_Chen_Lemma3.5_norm}, and Lemma \ref{lemma_Hardy}, we can derive the following estimates
	\begin{align}
		\left\|e^*(\theta)\right\|_{0,\omega^{\alpha,\beta,1}}  &\leq C\left( \sum_{p=1}^{9}\left\|E_p(\theta)\right\|_{0,\omega^{\alpha,\beta,1}}
		+\left\|E_8(\varepsilon\eta)\right\|_{0,\omega^{\alpha,\beta,1}}+\left\|E_8(\varepsilon\eta)\right\|_{0,\omega^{\alpha,\beta,1}}\right)\notag\\
		&\leq C\left(\sum_{p=1}^{9}\left\|E_p(\theta)\right\|_{0,\omega^{\alpha,\beta,1}}+\left\|E_8(\theta)\right\|_{\infty}+\left\|E_9(\theta)\right\|_{\infty} \right)\\
		\left\|e(\theta)\right\|_{0,\omega^{\alpha,\beta,1}}  &\leq C
		\left(\left\|e^*(\theta)\right\|_{0,\omega^{\alpha,\beta,1}}+\left\|E_8(\theta)\right\|_{0,\omega^{\alpha,\beta,1}}+\left\|E_9(\theta)\right\|_{0,\omega^{\alpha,\beta,1}} \right).\label{eq_e_e*}
	\end{align}
\end{proof}
Accordingly we will deduce convergence analysis in $L^{\infty}$-norm.

\subsection{Error estimate in $L^{\infty}$-norm}
\begin{theorem}\label{theorem_infty_norm}
Let $\varphi(\theta)$ be an exact solution of the VIDEs \eqref{eq_phi_prime} and \eqref{eq_phi_origin}, assuming it is sufficiently smooth.	
	$$
	\begin{aligned}
		\varphi_{N}^{\ast}\left(\theta\right)=\sum_{j=0}^{N}\varphi_j^{\ast}F_{j,\lambda}(\theta),
		\varphi_{N}\left(\theta\right)=\sum_{j=0}^{N}\varphi_jF_{j,\lambda}(\theta).\notag
	\end{aligned}
	$$
$\lbrace\varphi_j^*\rbrace_{j=0}^N,\lbrace\varphi_j\rbrace_{j=0}^N$ can be seen in \eqref{eq_collocation*_prime} and \eqref{eq_collocation*_origin}.
If $ \varphi\left(\theta^{\frac{1}{\lambda}}\right) \in B_{\alpha, \beta}^{m, 1}(I), \partial_\theta \varphi\left(\theta^{\frac{1}{\lambda}}\right) \in B_{\alpha, \beta}^{m, 1}(I), \\
\tilde{p}_1(\theta), \tilde{q}_1(\theta), \tilde{g}_1(\theta) \in C^m(I), \bar{K}_1(\theta, \eta), \bar{K}_2(\theta, \varepsilon \eta) \in C^m(I \times I),m \geq 1,\text{ and }-1<\alpha, \beta \leq-\frac{1}{2},$$0<\mu<1,$ where $\mu$ is associted with the weakly singular kernel and $0 < \kappa < 1-\mu,$
then we have
	\begin{align}
		\left\|e^*(\theta)\right\|_{\infty}\leq &CN^{-m}\Bigl\{\log N\bigl(\mathcal{K}^*\left\|\varphi(\theta)\right\|_{\infty}
		+N^{\frac{1}{2}-\kappa}\left\|\partial_\theta^m \varphi\left(\theta^{\frac{1}{\lambda}}\right)\right\|_{0, \omega^{\alpha+m,\beta+m,1}}\bigr)+N^{\frac{1}{2}}\Phi\Bigr\},\label{theorem_e*_infty}\\
		\left\|e(\theta)\right\|_{\infty}\leq&CN^{-m}\Bigl\{\log 	N\bigl(\mathcal{K}^*\left\|\varphi(\theta)\right\|_{\infty}
		+N^{\frac{1}{2}-\kappa}\left\|\partial_\theta^m 	\varphi\left(\theta^{\frac{1}{\lambda}}\right)\right\|_{0, \omega^{\alpha+m,\beta+m,1}}\bigr)
		+N^{\frac{1}{2}}\Phi\Bigr\}.\label{theorem_e_infty}
	\end{align}
where a sufficiently large positive integer $N$ is provided and
	\begin{align}
		\mathcal{K}^*=& \max\limits_{0\leq i\leq N}\left\|\partial_\theta^m \widetilde{K}_1\left(\theta_i, \eta_i(\cdot)\right)\right\|_ {0,\omega^{m-\mu,{m+\frac{\mu+\gamma}{\lambda}}-1,1}}+ \max\limits_{0\leq i\leq N}\left\|\partial_\theta^m \widetilde{K}_2\left(\theta_i, \varepsilon \eta_i(\cdot)\right)\right\|_ {0,\omega^{m-\mu,{m+\frac{\mu+\gamma}{\lambda}}-1,1}},\notag\\
		\Phi=&\left\|\partial_\theta^{m+1} \varphi\left(\theta^{\frac{1}{\lambda}}\right)\right\|_{0, \omega^{\alpha+m,\beta+m,1}}+\left\|\partial_\theta^m \varphi\left(\theta^{\frac{1}{\lambda}}\right)\right\|_{0, \omega^{\alpha+m,\beta+m,1}}.\notag
	\end{align}
\end{theorem}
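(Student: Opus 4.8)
The plan is to invoke Theorem \ref{theorem_Gronwall}, which already reduces the task to controlling $\|E_p(\theta)\|_\infty$ for $p=1,\dots,9$, since it gives $\|e^*\|_\infty\le C\sum_{p=1}^9\|E_p\|_\infty$ and $\|e\|_\infty\le C(\|e^*\|_\infty+\|E_8\|_\infty+\|E_9\|_\infty)$. I will sort the nine terms into three groups: (a) $E_1,E_2,E_3,E_8$, which produce the right-hand side of \eqref{theorem_e*_infty}; (b) $E_4,E_5,E_6,E_7,E_9$, which carry a prefactor tending to $0$ and hence get absorbed into $\|e^*\|_\infty$ for $N$ large; and (c) the auxiliary dependence on $\|\varphi_N\|$ arising in $E_1,E_2$, which is traded back for $\|\varphi\|_\infty+\|e\|_\infty$ via $\varphi_N=\varphi-e$.

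For $E_1$ and $E_2$ I write $\|E_p\|_\infty\le(\max_i|I_{i,p}|)\,\|I_{N,\lambda}^{\alpha,\beta}\|_\infty$: the Lebesgue constant is $O(\log N)$ by Lemma \ref{lemma_Lesbegue_constant} (using $-1<\alpha,\beta\le-\frac{1}{2}$), and \eqref{eq_E1_Ii1_absolute}--\eqref{eq_E2_Ii2_absolute} bound $\max_i|I_{i,p}|$ by $CN^{-m}\mathcal{K}^*\max_i\|\varphi_N(\eta_i(\cdot))\|_{0,\omega^{-\mu,\frac{\mu+\gamma}{\lambda}-1,1}}$. Because the weight $(1-\theta)^{-\mu}\theta^{\frac{\mu+\gamma}{\lambda}-1}$ is integrable ($0<\mu<1$, $\mu+\gamma\ge 1$, $\lambda>0$), that norm is $\le C\|\varphi_N\|_\infty\le C(\|\varphi\|_\infty+\|e\|_\infty)$; the $\|\varphi\|_\infty$-part yields the stated $N^{-m}\log N\,\mathcal{K}^*\|\varphi\|_\infty$ and the $\|e\|_\infty$-part acquires the vanishing factor $C\log N\,N^{-m}\mathcal{K}^*$ and joins group (b). For $E_3$ and $E_8$ I apply the $L^\infty$ interpolation estimate of Lemma \ref{lemma_Interpolation_infinity} to $\varphi'$ and to $\varphi$ respectively, which gives $CN^{1/2-m}$ times the two semi-norms that make up $\Phi$, i.e. exactly the $N^{1/2}\Phi$ contribution after pulling out $N^{-m}$.

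For $E_4,E_5,E_9$ and $E_6,E_7$ I use the identity $I_{N,\lambda}^{\alpha,\beta}w-w=I_{N,\lambda}^{\alpha,\beta}(w-p)-(w-p)$ valid for any $p\in P_N^\lambda$; choosing $p$ to be the pull-back through $\theta\mapsto\theta^{1/\lambda}$ of the near-best $C^{0,\kappa}$ approximant $\mathcal{T}_N$ of Lemma \ref{lemma_kappa_norm} and again using $\|I_{N,\lambda}^{\alpha,\beta}\|_\infty=O(\log N)$ gives $\|I_{N,\lambda}^{\alpha,\beta}w-w\|_\infty\le C\log N\,N^{-\kappa}\|w(\theta^{1/\lambda})\|_{0,\kappa}$ for the fixed $\kappa\in(0,1-\mu)$. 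When $w$ is the argument of $E_4,E_5,E_9$ (of the form $\tilde p_1\int_0^\theta e^*$, $\varepsilon\tilde q_1\int_0^\theta e^*(\varepsilon\,\cdot)$, $\int_0^\theta e^*$), the composed function is Lipschitz on $I$ — its $\theta$-derivative is bounded since $1/\lambda\ge 1$ and $e^*$ is continuous on $[0,1]$ — so $\|w(\theta^{1/\lambda})\|_{0,\kappa}\le C\|e^*\|_\infty$; when $w=\mathcal{K}_i e$ (the arguments of $E_6,E_7$), Lemma \ref{lemma_J6} gives $\|(\mathcal{K}_i e)(\theta^{1/\lambda})\|_{0,\kappa}\le C\|e\|_\infty$ directly. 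Hence $\|E_4\|_\infty+\|E_5\|_\infty+\|E_9\|_\infty\le C\log N\,N^{-\kappa}\|e^*\|_\infty$ and $\|E_6\|_\infty+\|E_7\|_\infty\le C\log N\,N^{-\kappa}(\|e^*\|_\infty+\|E_8\|_\infty+\|E_9\|_\infty)$; substituting the $E_8$-bound $CN^{1/2-m}\|\partial_\theta^m\varphi(\theta^{1/\lambda})\|_{0,\omega^{\alpha+m,\beta+m,1}}$ splits off both a term absorbable into $\|e^*\|_\infty$ and precisely the term $N^{-m}\log N\,N^{1/2-\kappa}\|\partial_\theta^m\varphi(\theta^{1/\lambda})\|_{0,\omega^{\alpha+m,\beta+m,1}}$ of \eqref{theorem_e*_infty}.

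Assembling everything, every term in group (b) and the stray $\|e\|_\infty$-part of $E_1,E_2$ is bounded by $\bigl(C\log N\,N^{-\kappa}+C\log N\,N^{-m}\mathcal{K}^*\bigr)\|e^*\|_\infty$ plus genuinely small residuals, so choosing $N$ beyond a threshold that depends only on the data makes this prefactor $<\frac{1}{2}$ and lets it be moved to the left of $\|e^*\|_\infty\le C\sum_p\|E_p\|_\infty$; what remains is exactly \eqref{theorem_e*_infty}. Then \eqref{theorem_e_infty} follows from $\|e\|_\infty\le C(\|e^*\|_\infty+\|E_8\|_\infty+\|E_9\|_\infty)$ together with the estimate just obtained for $e^*$ and the bounds for $E_8$ and $E_9$. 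The main obstacle is the middle step — showing that $E_4$ through $E_7$ and $E_9$ each gain the decaying factor $\log N\,N^{-\kappa}$ — which requires correctly combining the $O(\log N)$ Lebesgue constant, the $C^{0,\kappa}$ near-best approximation of Lemma \ref{lemma_kappa_norm}, and the H\"older continuity of the weakly singular operators under the fractional substitution (Lemma \ref{lemma_J6}), and then checking that the absorption threshold is independent of the unknown solution $\varphi$.
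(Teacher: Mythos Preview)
Your proposal is correct and reaches the same conclusion, but your handling of $E_4$--$E_7$ and $E_9$ differs from the paper's in two noteworthy ways.

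For $E_4,E_5,E_9$ the paper does \emph{not} use the $\mathcal{T}_N$/Lebesgue-constant trick. Instead it applies Lemma~\ref{lemma_Interpolation_infinity} with $m=1$ to $w(\theta)=\tilde p_1(\theta)\int_0^\theta e^*$ (respectively its analogues), differentiating the composition $w(\theta^{1/\lambda})$ explicitly to obtain $\|E_4\|_\infty\le CN^{-1/2}\|e^*\|_\infty$, and likewise for $E_5,E_9$. Your route gives the slightly different prefactor $C\log N\,N^{-\kappa}$; both tend to $0$ and are absorbable, but the paper's exponent $-1/2$ is sharper than $-\kappa$ whenever $\mu>1/2$.

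For $E_6,E_7$ the paper performs a Taylor splitting of the kernel, writing $\bar K_i(\theta,\cdot)=\bar K_i(0,0)+\hat K_i(\theta,\cdot)$. The constant-kernel part applied to $\varphi_N$ lands in $P_N^\lambda$ and is annihilated by $I_{N,\lambda}^{\alpha,\beta}-I$; the constant-kernel part applied to $\varphi$ is estimated via Lemma~\ref{lemma_Interpolation_infinity} with the full $m$ (after a change of variable and Lemma~\ref{lemma_Hardy}), producing an explicit $CN^{1/2-m}\|\partial_\theta^m\varphi(\theta^{1/\lambda})\|_{0,\omega^{\alpha+m,\beta+m,1}}$ contribution; and only the remainder $\hat K_i$ part is treated with the $\mathcal{T}_N$/Lebesgue-constant argument (Lemmas~\ref{lemma_kappa_norm}, \ref{lemma_J6}, \ref{lemma_Lesbegue_constant}). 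You bypass this decomposition entirely and apply Lemma~\ref{lemma_J6} to the full operator $\mathcal{K}_i e$, obtaining $\|E_6\|_\infty+\|E_7\|_\infty\le C\log N\,N^{-\kappa}\|e\|_\infty$ directly. This is valid --- Lemma~\ref{lemma_J6} places no vanishing condition on $K_i(0,0)$ --- and more economical; the paper's extra term $CN^{1/2-m}\|\partial_\theta^m\varphi(\theta^{1/\lambda})\|$ is in any case dominated by $N^{1/2}\Phi$ in the final bound, so nothing is lost.

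In short: the overall architecture (Theorem~\ref{theorem_Gronwall} followed by termwise estimates and absorption) matches, but you use a single, uniform device for all of $E_4$--$E_7,E_9$, whereas the paper mixes the $m=1$ interpolation estimate (for $E_4,E_5,E_9$) with a kernel splitting plus high-order interpolation and Hardy's inequality (for $E_6,E_7$). Your approach is shorter; the paper's gives a marginally cleaner absorption constant for $E_4,E_5,E_9$.
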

\begin{proof}
As discussed in \eqref{eq_Ii1},\eqref{eq_Ii2},\eqref{e*_e}, Lemma \ref{lemma_Lesbegue_constant} and Lemma \ref{lemma_Ii1Ii2}, we can obtain
	\begin{align}
		\left\|E_1(\theta)\right\|_{\infty}
		&\leq C\max\limits_{0\leq i\leq N}\left|I_{i, 1}\right| \max\limits_{\theta \in I}\sum_{j=0}^{N}\left|F_{j,\lambda}(\theta)\right|\notag\\
		&\leq C N^{-m}\log N \max\limits_{0\leq i\leq N}\left\|\partial_\theta^m \widetilde{K}_1\left(\theta_i,  \eta_i(\cdot)\right)\right\|_ {0,\omega^{m-\mu,{m+\frac{\mu+\gamma}{\lambda}}-1,1}}
		\left\|\varphi_N\left( \eta_i\left(\cdot\right)\right)\right\|_{0,\omega^{-\mu,\frac{\mu+\gamma}{\lambda}-1,1}},\notag\\
		&\leq C N^{-m}\log N \max\limits_{0\leq i\leq N}\left\|\partial_\theta^m \widetilde{K}_1\left(\theta_i,  \eta_i(\cdot)\right)\right\|_ {0,\omega^{m-\mu,{m+\frac{\mu+\gamma}{\lambda}}-1,1}}
		\left(\left\|\varphi(\theta)\right\|_{\infty}+\left\|e(\theta)\right\|_\infty\right)\notag\\
		&\leq C N^{-m}\log N \max\limits_{0\leq i\leq N}\left\|\partial_\theta^m \widetilde{K}_1\left(\theta_i,  \eta_i(\cdot)\right)\right\|_ {0,\omega^{m-\mu,{m+\frac{\mu+\gamma}{\lambda}}-1,1}}\notag\\
		&\quad\left(\left\|\varphi(\theta)\right\|_{\infty}+\|{e}^{*}(\theta)\|_{\infty}+\left\|E_8(\theta)\right\|_{\infty}+\left\|E_9(\theta)\right\|_{\infty}\right),\label{eq_E1_infty}
	\end{align}
	\begin{align}
		\left\|E_2(\theta)\right\|_{\infty}
		&\leq C\max\limits_{0\leq i\leq N}\left|I_{i, 2}\right| \max\limits_{\theta \in I}\sum_{j=0}^{N}\left|F_{j,\lambda}(\theta)\right|\notag\\
		&\leq C N^{-m}\log N \max\limits_{0\leq i\leq N}\left\|\partial_\theta^m \widetilde{K}_2\left(\theta_i, \varepsilon \eta_i(\cdot)\right)\right\|_ {0,\omega^{m-\mu,{m+\frac{\mu+\gamma}{\lambda}}-1,1}}
		\left\|\varphi_N\left(\varepsilon \eta_i\left(\cdot\right)\right)\right\|_ {0,\omega^{-\mu,\frac{\mu+\gamma}{\lambda}-1,1}}\notag\\
		&\leq C N^{-m}\log N \max\limits_{0\leq i\leq N}\left\|\partial_\theta^m \widetilde{K}_2\left(\theta_i, \varepsilon \eta_i(\cdot)\right)\right\|_ {0,\omega^{m-\mu,{m+\frac{\mu+\gamma}{\lambda}}-1,1}}
		\left(\left\|\varphi(\theta)\right\|_{\infty}+\left\|e(\theta)\right\|_{\infty}\right)\notag\\
		&\leq C N^{-m}\log N \max\limits_{0\leq i\leq N}\left\|\partial_\theta^m \widetilde{K}_2\left(\theta_i, \varepsilon \eta_i(\cdot)\right)\right\|_ {0,\omega^{m-\mu,{m+\frac{\mu+\gamma}{\lambda}}-1,1}}\notag\\
		&\quad\left(\left\|\varphi(\theta)\right\|_{\infty}+\|{e}^{*}(\theta)\|_{\infty}+\left\|E_8(\theta)\right\|_{\infty}+\left\|E_9(\theta)\right\|_{\infty}\right).\label{eq_E2_infty}
	\end{align}
Applying the integration error estimate from Lemma \ref{lemma_Interpolation_infinity},
	\begin{align}
		&\left\|E_3(\theta)\right\|_{\infty} \leq C N^{\frac{1}{2}-m}\left\|\partial_\theta^{m+1} \varphi\left(\theta^{\frac{1}{\lambda}}\right)\right\|_{0, \omega^{\alpha+m,\beta+m,1}},\label{eq_E3_infity}\\
		&\left\|E_8(\theta)\right\|_{\infty}\leq C N^{\frac{1}{2}-m}\left\|\partial_\theta^m \varphi\left(\theta^{\frac{1}{\lambda}}\right)\right\|_{0, \omega^{\alpha+m,\beta+m,1}}.\label{eq_E8_infty}
	\end{align}

$\int_0^{\theta^{\frac{1}{\lambda}}} e^*(\eta) d \eta=\int_0^{\theta^{\frac{1}{\lambda}}}\left(\varphi^{\prime}(\eta)-\varphi_N^*(\eta)\right) d \eta,$
we set $ \varphi\left(\theta^{\frac{1}{\lambda}}\right) \in B_{\alpha, \beta}^{1,1}(I) \text { , then } \int_0^{\theta^{\frac{1}{\lambda}}} \varphi^{\prime}(\eta) d \eta \in B_{\alpha, \beta}^{1,1}(I)$. 
We have $\int_0^{\theta^{\frac{1}{\lambda}}} \varphi_N^*(\eta) d \eta \in B_{\alpha, \beta}^{1,1}(I)$ from  $\varphi_N^*\left(\theta^{\frac{1}{\lambda}}\right) \in B_{\alpha, \beta}^{1,1}(I)$, then $\left\|E_4(\theta)\right\|_{\infty}$ can be estimated by Lemma \ref{lemma_Interpolation_infinity} setting $m=1$,
		\begin{align}
		\left\|E_4(\theta)\right\|_{\infty}
		& \leq\left\|\left(I_{N, \lambda}^{a, \beta}-I\right) \tilde{p}_1(\theta) \int_0^\theta e^*(\eta) d \eta\right\|_{\infty}\notag \\
		& \leq C N^{-\frac{1}{2}}\left\|\partial_\theta\left( \tilde{p}_1(\theta^{\frac{1}{\lambda}}) \int_0^{\theta^{\frac{1}{\lambda}}} e^*(\eta) d \eta\right)\right\|_{\infty} \notag\\
		&\leq C N^{-\frac{1}{2}}\left\|\frac{1}{\lambda} \theta^{\frac{1}{\lambda}-1}\partial_\theta\tilde{p}_1(\theta^{\frac{1}{\lambda}})\int_0^{\theta^{\frac{1}{\lambda}}} e^*(\eta) d \eta    \right\|_{\infty} 
		+ C N^{-\frac{1}{2}}\left\|\frac{1}{\lambda} \theta^{\frac{1}{\lambda}-1} e^*\left(\theta^{\frac{1}{\lambda}}\right)+\int_0^{\theta^{\frac{1}{\lambda}}} e^*(\eta) d \eta\right\|_{\infty}\notag\\
		& \leq C N^{-\frac{1}{2}}\left\|\theta^{\frac{1}{\lambda}}e^*(\theta)\right\|_{\infty}+C N^{-\frac{1}{2}}\left\|\frac{1}{\lambda} \theta^{\frac{1}{\lambda}-1} e^*\left(\theta^{\frac{1}{\lambda}}\right)+\int_0^{\theta^{\frac{1}{\lambda}}} e^*(\eta) d \eta\right\|_{\infty} \notag\\
		&\leq C N^{-\frac{1}{2}}\left(\left\|e^*\left(\theta^{\frac{1}{\lambda}}\right)\right\|_{\infty}+
		\left\|e^*(\theta)\right\|_{\infty}\right)
		\leq C N^{-\frac{1}{2}}\left\|e^*(\theta)\right\|_{\infty}.\label{eq_E4_infty}
	\end{align}
The same approach can be used to estimate $\left\|E_5(\theta)\right\|_{\infty},\left\|E_9(\theta)\right\|_{\infty}$, respectively 
	\begin{align}
		&\left\|E_5(\theta)\right\|_{\infty}\leq C N^{-\frac{1}{2}}\left\|e^*(\theta)\right\|_{\infty},\label{eq_E5_infty}\\
		&\left\|E_9(\theta)\right\|_{\infty}\leq C N^{-\frac{1}{2}}\left\|e^*(\theta)\right\|_{\infty}.\label{eq_E9_infty}
	\end{align}
When 
	$\mu+\gamma \geq 1 \text {, } \bar{K}_2(\theta, \varepsilon\eta) \in C^m(I \times I) \text {, so there exists } \xi \in(0, \theta). 
\text { Performing a Taylor expansion on it }$\\
can yield
	$\bar{K}_2(\theta, \varepsilon\eta)=\bar{K}_2(0,0)+\hat{K}_2(\theta, \varepsilon\eta), \\$
	\begin{align}
		\hat{K}_2(\theta, \varepsilon\eta)=&\sum_{k=1}^{m-1} \frac{\partial_{(\varepsilon\eta)}^k \bar{K}_2(0,0)}{k!}(\varepsilon\eta)^k+\frac{\partial_{(\varepsilon\eta)}^m \bar{K}_2\left(\theta, \varepsilon_1\right)}{m!}(\varepsilon\eta)^m\notag\\
		&+\sum_{k=1}^{m-1} \frac{\partial_\theta^k \bar{K}_2(0,0)}{k!}\theta^k+\frac{\partial_\theta^m \bar{K}_2\left(\varepsilon_2,0\right)}{m!}\theta^m
	\end{align}
where $\varepsilon_1, \varepsilon_2 \in (0,1)$, then
	\begin{align}
		E_7(\theta)&=\left(I_{N, \lambda}^{\alpha, \beta}-I\right) \int_0^\theta \frac{(\theta-\eta)^{-\mu}}{\theta^{\gamma}} \eta^{\mu+\gamma-1}\left(\bar{K}_2(0,0)+\hat{K}_2(\theta, \varepsilon\eta)\right) e(\varepsilon\eta) d \eta \notag\\
		& =\left(I_{N, \beta}^{\alpha, \beta}-I\right) \int_0^\theta \frac{(\theta-\eta)^{-\mu}}{\theta^{\gamma}} \eta^{\mu+\gamma-1} \bar{K}_2(0,0)\left(\varphi(\varepsilon\eta)-\varphi_N(\varepsilon\eta)\right) d \eta \notag\\
		& \quad+\left(I_N^{\alpha, \beta}-I\right) \int_0^\theta \frac{(\theta-\eta)^{-\mu}}{\theta^{\gamma}} \eta^{\mu+\gamma-1} \hat{K}_2(\theta,\varepsilon \eta) e(\varepsilon\eta) d \eta \notag\\
		& =E_{7,1}(\theta)+E_{7,2}(\theta)+E_{7,3}(\theta).\notag
	\end{align}
Based on Lemmas \ref{lemma_J6}, \ref{lemma_Hardy} and \ref{lemma_Interpolation_infinity}, error estimates are made for $E_{7,1}(\theta), E_{7,2}(\theta), E_{7,3}(\theta)$ respectively.
Since $\int_0^\theta \frac{(\theta-\eta)^{-\mu}}{\theta^{\gamma}} \eta^{ \mu+\gamma-1} d \eta=B(1-\mu, \mu+\gamma),$
therefore, by the mean value theorem for integrals, it is known that
	$ \int_0^\theta \frac{(\theta-\eta)^{-\mu}}{\theta^{\gamma}} \eta^{\mu+\gamma-1} \bar{K}_2(0,0)\varphi_N(\varepsilon\eta)d \eta \in P_N^\lambda(I)$,
so \begin{align}E_{7, 2}(\theta)\equiv0\label{eq_E62_infty}.\end{align}
	\begin{align}
	\left\|E_{7,1}(\theta)\right\|_{\infty} &\leq C N^{\frac{1}{2}-m}\left\|\partial_{\theta}^m \int_0^{\theta^{\frac{1}{\lambda}}}\left(\theta^{\frac{1}{\lambda}}\right)^{-\gamma}\left(\theta^{\frac{1}{\lambda}}-\eta\right)^{-\mu} \eta^{\mu+\gamma-1}\bar{K}_2(0,0) \varphi(\varepsilon\eta)d \eta\right\|_{0, \omega^{\alpha+m, \beta+m, 1}}\notag\\
	&\overset{(1)}{\leq} C N^{\frac{1}{2}-m}\left\| \frac{1}{\lambda}\int_{0}^{1}\left(1-q^{\frac{1}{\lambda}}\right)^{-\mu}q^{\frac{\mu+\gamma}{\lambda}-1}\bar{K}_2(0,0)\partial_{\theta}^m\varphi\left(\varepsilon(\theta q)^{\frac{1}{\lambda}}\right)d q
	\right\|_{0, \omega^{\alpha+m, \beta+m, 1}}\notag\\
	&\overset{(2)}{\leq} C N^{\frac{1}{2}-m}\left\| \frac{1}{\lambda}\int_{0}^{1}\left(1-q^{\frac{1}{\lambda}}\right)^{-\mu}q^{\frac{\mu+\gamma}{\lambda}-1}\bar{K}_2(0,0)\varepsilon^{\lambda m}q^m\partial_{\hat{q}}^m\varphi\left((\hat{q})^{\frac{1}{\lambda}}\right)d q
	\right\|_{0, \omega^{\alpha+m, \beta+m, 1}}\notag\\
	&\overset{(3)}{\leq}C N^{\frac{1}{2}-m}\left\|\partial_\theta^m \varphi\left(\theta^{\frac{1}{\lambda}}\right)\right\|_{0, \omega^{\alpha+m,\beta+m,1}}.\label{eq_E61_infty}
	\end{align}
where $(1)\text{ Variable transformation }\eta=(\theta q)^{\frac{1}{\lambda}}$,$(2)\text{ Variable transformation }\hat{q}=\varepsilon^{\lambda}\theta q$, $(3)$ Follow from Lemma \ref{lemma_Hardy}, let
	$$
	\begin{aligned}
		\mathcal{\hat{K}}_2e(\theta)=\int_0^\theta \frac{(\theta-\eta)^{-\mu}}{\theta^{\gamma}} \eta^{\mu+\gamma-1} \mathcal{\hat{K}}_2(\theta, \varepsilon\eta) e(\varepsilon\eta) d \eta\notag.
	\end{aligned}
	$$
It is transformed from \eqref{eq_Operator}. So by virtue of \eqref{eq_Interpolation_operator_sum}, Lemmas \ref{lemma_kappa_norm}, \ref{lemma_J6} and \ref{lemma_Lesbegue_constant}, for any $\kappa \in (0,1-\mu)$ and $N$ sufficiently large, the desired estimates can be 
	\begin{align}
		\left\|E_{7,3}(\theta)\right\|_{\infty} & =\max _{z^{\frac{1}{\lambda}} \in I}\left|\left(I_{N, \lambda}^{\alpha \beta}-I\right)\left(\mathcal{\hat{K}}_2 e\right)\left(z^{\frac{1}{\lambda}}\right)\right| \notag\\ 
		& =\left\|\left(I_{N, 1}^{\alpha,\beta}-I\right)\left(\mathcal{\hat{K}}_2 e\right)\left(z^{\frac{1}{\lambda}}\right)\right\|_{\infty} \notag\\ 
		& =\left\|\left(I_{N,1}^{\alpha \cdot \beta}-I\right)\left(I-\mathcal{T}_N\right)\mathcal{\hat{K}}_2 e\left(z^{\frac{1}{\lambda}}\right)\right\|_{\infty} \notag\\ 
		&=\left(\left\|I_{N,1}^{\alpha \cdot \beta}\right\|_{\infty}+1\right)\left\|\left(I-\mathcal{T}_N\right)\mathcal{\hat{K}}_2 e\left(z^{\frac{1}{\lambda}}\right)\right\|_{\infty}\notag\\ 
		& \leq C N^{-\kappa} \log N\left\|\mathcal{\hat{K}}_2 e\left(z^{\frac{1}{\lambda}}\right)\right\|_{0,\kappa} \notag\\ 
		& \leq C N^{-\kappa} \log N\|e(\theta)\|_{\infty}.\label{eq_E63_infty}
	\end{align}
So on, the estimate for $\left\|E_{7}(\theta)\right\|_{\infty}$ is provided under \eqref{eq_E62_infty}-\eqref{eq_E63_infty}
	\begin{align}
		\left\|E_{7}(\theta)\right\|_{\infty}&=C\sum_{k=0}^{3}\left\|E_{7,k}(\theta)\right\|_{\infty}\notag\\
		&\leq C N^{\frac{1}{2}-m}\left\|\partial_\theta^m \varphi\left(\theta^{\frac{1}{\lambda}}\right)\right\|_{0, \omega^{\alpha+m,\beta+m,1}}+ C N^{-\kappa} \log N\|e(\theta)\|_{\infty}\notag\\
		&\leq C N^{\frac{1}{2}-m}\left\|\partial_\theta^m \varphi\left(\theta^{\frac{1}{\lambda}}\right)\right\|_{0, \omega^{\alpha+m,\beta+m,1}}\notag\\
		&\quad+CN^{-\kappa} \log N\left(\|{e}^{*}(\theta)\|_{\infty}+\left\|E_8(\theta)\right\|_{\infty}+\left\|E_9(\theta)\right\|_{\infty}\right),\label{eq_E6_infty}
	\end{align}
Similarly, let $\widetilde{q}=\theta q$. It can be obtained that
		\begin{align}
		\left\|E_{6}(\theta)\right\|_{\infty}=&C\sum_{k=0}^{3}\left\|E_{6,k}(\theta)\right\|_{\infty}\notag\\
		\leq& C N^{\frac{1}{2}-m}\left\|\partial_\theta^m \varphi\left(\theta^{\frac{1}{\lambda}}\right)\right\|_{0, \omega^{\alpha+m,\beta+m,1}}+ C N^{-\kappa} \log N\|e(\theta)\|_{\infty}\notag\\
		\leq& C N^{\frac{1}{2}-m}\left\|\partial_\theta^m \varphi\left(\theta^{\frac{1}{\lambda}}\right)\right\|_{0, \omega^{\alpha+m,\beta+m,1}}\notag\\
		&+C N^{-\kappa} \log N\left(\|{e}^{*}(\theta)\|_{\infty}+\left\|E_8(\theta)\right\|_{\infty}+\left\|E_9(\theta)\right\|_{\infty}\right).\label{eq_E7_infty}
	\end{align}
In conclusion, from \eqref{eq_E1_infty} to \eqref{eq_E7_infty} together with \eqref{theorem_normcontrol_e*_infty} and \eqref{theorem_normcontrol_e_infty}, we can obtain the estimates for $\left\|e^*(\theta)\right\|_{\infty}$ and $\left\|e(\theta)\right\|_{\infty}$ as follows
		\begin{align}
		\left\|e^*(\theta)\right\|_{\infty}\leq &CN^{-m}\Bigl\{\log N\bigl(\mathcal{K}^*\left\|\varphi(\theta)\right\|_{\infty}
		+N^{\frac{1}{2}-\kappa}\left\|\partial_\theta^m \varphi\left(\theta^{\frac{1}{\lambda}}\right)\right\|_{0, \omega^{\alpha+m,\beta+m,1}}\bigr)+N^{\frac{1}{2}}\Phi\Bigr\},\notag\\
		\left\|e(\theta)\right\|_{\infty}\leq&CN^{-m}\Bigl\{\log N\bigl(\mathcal{K}^*\left\|\varphi(\theta)\right\|_{\infty}
		+N^{\frac{1}{2}-\kappa}\left\|\partial_\theta^m \varphi\left(\theta^{\frac{1}{\lambda}}\right)\right\|_{0, \omega^{\alpha+m,\beta+m,1}}\bigr)+N^{\frac{1}{2}}\Phi\Bigr\}.
	\end{align}
\end{proof}
Thereupon, we will derive the error estimate under the $L_{\omega^{\alpha,\beta,\lambda}}^{2}$-norm.

\subsection{Error estimate in $L_{\omega^{\alpha,\beta,\lambda}}^{2}$-norm}
\begin{theorem}\label{theorem_L2_norm}
If the hypotheses given in Theorem \ref{theorem_infty_norm} hold, then estimates of weighted $L^2$-norm behave as
	\begin{align}
		\left\|e^*(\theta)\right\|_{0,\omega^{\alpha,\beta,1}} \leq&
		CN^{-m}\Bigl\{\left(N^{-\kappa}\log N+1\right)\mathcal{K}^*\left\|\varphi(\theta)\right\|_{\infty} \notag\\
		&+\left(N^{\frac{1}{2}-\kappa}+1\right)\left\|\partial_\theta^m \varphi\left(\theta^{\frac{1}{\lambda}}\right)\right\|_{0, \omega^{\alpha+m,\beta+m,1}}
		+\left(N^{\frac{1}{2}-\kappa}+1\right)\Phi\Bigr\},\label{theorem_e*_L2}\\
		\left\|e(\theta)\right\|_{0,\omega^{\alpha,\beta,1}} \leq&
		 CN^{-m}\Bigl\{\left(N^{-\kappa}\log N+1\right)\mathcal{K}^*\left\|\varphi(\theta)\right\|_{\infty} \notag\\
		 &+\left(N^{\frac{1}{2}-\kappa}+2\right)\left\|\partial_\theta^m \varphi\left(\theta^{\frac{1}{\lambda}}\right)\right\|_{0, \omega^{\alpha+m,\beta+m,1}}
		 +\left(N^{\frac{1}{2}-\kappa}+1\right)\Phi\Bigr\}. \label{theorem_e_L2}
	\end{align}
\end{theorem}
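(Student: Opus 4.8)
The plan is to repeat the architecture of the proof of Theorem~\ref{theorem_infty_norm}, only now feeding the terms $E_p(\theta)$ into the weighted-$L^2$ a priori estimates \eqref{theorem_normcontrol_e*_w}--\eqref{theorem_normcontrol_e_w} of Theorem~\ref{theorem_Gronwall} rather than the $L^\infty$ ones. Consequently it suffices to bound $\|E_p(\theta)\|_{0,\omega^{\alpha,\beta,1}}$ for $p=1,\dots,9$; the quantities $\|E_8(\theta)\|_\infty$ and $\|E_9(\theta)\|_\infty$ that also appear on the right of \eqref{theorem_normcontrol_e*_w} are already available from \eqref{eq_E8_infty}--\eqref{eq_E9_infty}, while every occurrence of $\|e(\theta)\|_\infty$ or $\|e^*(\theta)\|_\infty$ will be replaced by the (assumed) conclusion of Theorem~\ref{theorem_infty_norm}, which keeps the argument non-circular.

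For the quadrature terms, write $E_1(\theta)=\sum_{i=0}^N I_{i,1}F_{i,\lambda}(\theta)=I_{N,\lambda}^{\alpha,\beta}h$ with $h(\theta_i)=I_{i,1}$; Lemma~\ref{lemma_E6E7_L^2} gives $\|E_1\|_{0,\omega^{\alpha,\beta,1}}\le C\max_i|I_{i,1}|$, and combining this with \eqref{eq_E1_Ii1_absolute} and $\|\varphi_N(\eta_i(\cdot))\|_{0,\omega^{-\mu,\frac{\mu+\gamma}{\lambda}-1,1}}\le\|\varphi\|_\infty+\|e\|_\infty$ yields $\|E_1\|_{0,\omega^{\alpha,\beta,1}}\le CN^{-m}\mathcal{K}^*(\|\varphi\|_\infty+\|e^*\|_\infty+\|E_8\|_\infty+\|E_9\|_\infty)$, and likewise for $E_2$ through \eqref{eq_E2_Ii2_absolute}. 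For the interpolation residuals $E_3=(I-I_{N,\lambda}^{\alpha,\beta})\varphi'$ and $E_8=(I-I_{N,\lambda}^{\alpha,\beta})\varphi$, Lemma~\ref{lemma_Interpolation_L^2} applies directly, giving $\|E_3\|_{0,\omega^{\alpha,\beta,1}}\le CN^{-m}\Phi$ and $\|E_8\|_{0,\omega^{\alpha,\beta,1}}\le CN^{-m}\|\partial_\theta^m\varphi(\theta^{1/\lambda})\|_{0,\omega^{\alpha+m,\beta+m,1}}$, both sharper by a factor $N^{1/2}$ than their $L^\infty$ analogues. For $E_4,E_5,E_9$, use Lemma~\ref{lemma_Interpolation_L^2} with $m=1$, differentiate the relevant products exactly as in \eqref{eq_E4_infty}, \eqref{eq_E5_infty}, \eqref{eq_E9_infty}, and absorb the resulting integral terms by the generalized Hardy inequality (Lemma~\ref{lemma_Hardy}); this produces $\|E_4\|_{0,\omega^{\alpha,\beta,1}},\|E_5\|_{0,\omega^{\alpha,\beta,1}},\|E_9\|_{0,\omega^{\alpha,\beta,1}}\le CN^{-1}\|e^*\|_{0,\omega^{\alpha,\beta,1}}$.

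The decisive terms are $E_6$ and $E_7$. As in Theorem~\ref{theorem_infty_norm}, Taylor-expand $\bar K_i(\theta,\varepsilon\eta)=\bar K_i(0,0)+\hat K_i(\theta,\varepsilon\eta)$ and split $E_7=E_{7,1}+E_{7,2}+E_{7,3}$, where $E_{7,2}\equiv0$ because $\int_0^\theta\frac{(\theta-\eta)^{-\mu}}{\theta^\gamma}\eta^{\mu+\gamma-1}\bar K_2(0,0)\varphi_N(\varepsilon\eta)\,d\eta\in P_N^\lambda(I)$. For $E_{7,1}$, the changes of variable $\eta=(\theta q)^{1/\lambda}$ and $\hat q=\varepsilon^\lambda\theta q$ of \eqref{eq_E61_infty}, now combined with Lemma~\ref{lemma_Interpolation_L^2} and Lemma~\ref{lemma_Hardy}, give $\|E_{7,1}\|_{0,\omega^{\alpha,\beta,1}}\le CN^{-m}\|\partial_\theta^m\varphi(\theta^{1/\lambda})\|_{0,\omega^{\alpha+m,\beta+m,1}}$. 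For $E_{7,3}=(I_{N,1}^{\alpha,\beta}-I)(I-\mathcal{T}_N)(\hat{\mathcal{K}}_2e)(z^{1/\lambda})$, use the $L^2$-stability $\|I_{N,1}^{\alpha,\beta}w\|_{0,\omega^{\alpha,\beta,\lambda}}\le C\|w\|_\infty$ of Lemma~\ref{lemma_E6E7_L^2} in place of the Lebesgue-constant bound, together with Lemmas~\ref{lemma_kappa_norm} and \ref{lemma_J6}, to obtain $\|E_{7,3}\|_{0,\omega^{\alpha,\beta,1}}\le CN^{-\kappa}\|\hat{\mathcal{K}}_2e(z^{1/\lambda})\|_{0,\kappa}\le CN^{-\kappa}\|e\|_\infty$, one $\log N$ lighter than \eqref{eq_E63_infty}; the term $E_6$ is treated identically with $\widetilde q=\theta q$. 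Substituting the $L^\infty$ estimate of Theorem~\ref{theorem_infty_norm} for $\|e\|_\infty$ turns these into explicit powers of $N$ and in particular produces the $N^{-m}\,N^{-\kappa}\log N\,\mathcal{K}^*\|\varphi\|_\infty$ contribution of \eqref{theorem_e*_L2}.

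It remains to assemble. Inserting all the $E_p$-bounds into \eqref{theorem_normcontrol_e*_w} and replacing each $\|e\|_\infty$ via \eqref{e*_e} and \eqref{eq_E8_infty}--\eqref{eq_E9_infty}, the right-hand side takes the form $CN^{-m}\{\dots\}+CN^{-1}\|e^*\|_{0,\omega^{\alpha,\beta,1}}$; for $N$ sufficiently large the last term is absorbed on the left, which is \eqref{theorem_e*_L2}. Feeding this together with the $L^2$ bounds for $E_8$ and $E_9$ into \eqref{theorem_normcontrol_e_w} then yields \eqref{theorem_e_L2}. I expect the main obstacle to be the bookkeeping in the $E_6,E_7$ estimates: ensuring every $\|e\|_\infty$ is fed the already-established $L^\infty$ bound so that no circularity arises, checking that the coefficients multiplying the residual $\|e^*\|_{0,\omega^{\alpha,\beta,1}}$-terms are genuinely $o(1)$ so the absorption step is legitimate, and applying the change-of-variable identities relating the $\omega^{\alpha,\beta,\lambda}$- and $\omega^{\alpha,\beta,1}$-weighted norms (implicit in \eqref{eq_Interpolation_operator_sum}) consistently throughout.
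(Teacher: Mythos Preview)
Your proposal is correct and mirrors the paper's own argument: bound each $\|E_p\|_{0,\omega^{\alpha,\beta,1}}$ by swapping in Lemmas~\ref{lemma_Interpolation_L^2} and~\ref{lemma_E6E7_L^2} for their $L^\infty$ counterparts, keep the three-piece Taylor decomposition of $E_6,E_7$ (with $E_{\cdot,2}\equiv0$ and the $\log N$ dropped from $E_{\cdot,3}$ via Lemma~\ref{lemma_E6E7_L^2}), and feed the $L^\infty$ conclusion of Theorem~\ref{theorem_infty_norm} into every surviving $\|e\|_\infty,\|e^*\|_\infty$. The only cosmetic difference is that the paper bounds $\|E_4\|,\|E_5\|,\|E_9\|\le CN^{-1}\|e^*\|_\infty$ and then substitutes Theorem~\ref{theorem_infty_norm}, rather than keeping $CN^{-1}\|e^*\|_{0,\omega^{\alpha,\beta,1}}$ and absorbing on the left as you do; either route closes and yields the stated estimates.
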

\begin{proof}
By Lemmas \ref{lemma_Interpolation_L^2} and \ref{lemma_E6E7_L^2}, we can establish the following error analysis for the weighted $L^2$-norm, similar to the proof process in Theorem \ref{theorem_infty_norm}:
		\begin{align}
		\left\|E_1(\theta)\right\|_{0, \omega^{\alpha, \beta, \lambda}}&=\left\|\sum_{i=0}^N I_{i, 1} F_{i,\lambda}(\theta)\right\|_{0, \omega^{\alpha, \beta,\lambda}} \leq \max _{0 \leq i \leq N}\left|I_{i, 1}\right|\notag \\
		& \leq C N^{-m} \max\limits_{0\leq i\leq N}\left\|\partial_\theta^m \widetilde{K}_1\left(\theta_i,  \eta_i(\cdot)\right)\right\|_ {0,\omega^{m-\mu,{m+\frac{1}{\lambda}}-1,1}}\left(\|e(\theta)\|_{\infty}+\|\varphi(\theta)\|_{\infty}\right) \notag\\
		&\leq C N^{-m} \max\limits_{0\leq i\leq N}\left\|\partial_\theta^m \widetilde{K}_1\left(\theta_i,  \eta_i(\cdot)\right)\right\|_ {0,\omega^{m-\mu,{m+\frac{1}{\lambda}}-1,1}}\notag\\
		&\quad\left(\|{e}^{*}(\theta)\|_{\infty}+\left\|E_8(\theta)\right\|_{\infty}+\left\|E_9(\theta)\right\|_{\infty}+\|\varphi(\theta)\|_{\infty}\right),\label{eq_E1_L2}\\
		\left\|E_2(\theta)\right\|_{0, \omega^{\alpha, \beta,\lambda}} &\leq C N^{-m} \max\limits_{0\leq i\leq N}\left\|\partial_\theta^m \widetilde{K}_2\left(\theta_i, \varepsilon \eta_i(\cdot)\right)\right\|_ {0,\omega^{m-\mu,{m+\frac{1}{\lambda}}-1,1}}\left(\|e(\theta)\|_{\infty}+\|\varphi(\theta)\|_{\infty}\right)\notag\\
		&\leq C N^{-m} \max\limits_{0\leq i\leq N}\left\|\partial_\theta^m \widetilde{K}_2\left(\theta_i, \varepsilon \eta_i(\cdot)\right)\right\|_ {0,\omega^{m-\mu,{m+\frac{1}{\lambda}}-1,1}}\notag\\
		&\quad\left(\|{e}^{*}(\theta)\|_{\infty}+\left\|E_8(\theta)\right\|_{\infty}+\left\|E_9(\theta)\right\|_{\infty}+\|\varphi(\theta)\|_{\infty}\right),\label{eq_E2_L2}
	\end{align}
	\begin{align}
		\left\|E_3(\theta)\right\|_{0, \omega^{\alpha, \beta, \lambda}} &\leq C N^{-m}\left\|\partial_\theta^{m+1} \varphi\left(\theta^{\frac{1}{\lambda}}\right)\right\|_{0, \omega^{\alpha+m,\beta+m,1}},\label{eq_E3_L2}\\
		\left\|E_8(\theta)\right\|_{0, \omega^{\alpha, \beta,\lambda}} &\leq C N^{-m} \left\|\partial_\theta^{m} \varphi\left(\theta^{\frac{1}{\lambda}}\right)\right\|_{0, \omega^{\alpha+m,\beta+m,1}},\label{eq_E8_L2}\\
		\left\|E_4(\theta)\right\|_{0, \omega^{\alpha, \beta,\lambda}} &\leq C N^{-1} \left\| e^*\left(\theta^{\frac{1}{\lambda}}\right)\right\|_{0, \omega^{\alpha, \beta,\lambda}} \leq C N^{-1}\left\| e^*(\theta)\right\|_{\infty},\label{eq_E4_L2}
	\end{align}
	\begin{align}
		\left\|E_5(\theta)\right\|_{0, w^{\alpha, \beta, \lambda}} &\leq C N^{-1}\left\|e^*(\theta)\right\|_{\infty},\label{eq_E5_L2}\\
		\left\|E_9(\theta)\right\|_{0, \omega^{\alpha, \beta,\lambda}}&\leq C N^{-1}\left\|e^*(\theta)\right\|_{\infty}\label{eq_E9_L2},\\
		\left\|E_{6,1}(\theta)\right\|_{0, \omega^{\alpha, \beta,\lambda}}&\leq C N^{-m}\left\|\partial_{\theta}^m\varphi\left(\theta^{\frac{1}{\lambda}}\right)\right\|_{0, \omega^{\alpha+m, \beta+m, 1}},\label{eq_E61_L2}
	\end{align}
	\begin{align}
		\left\|E_{6,3}(\theta)\right\|_{0, \omega^{\alpha, \beta,\lambda}}&=\left\|\left(I_{\beta, \lambda}^{\alpha, \beta}-I\right)\left(\mathcal{\hat{K}}_1 e\right)(\theta)\right\|_{0, \omega^{\alpha, \beta, \lambda}} \notag\\
		& =\left\|\left(I_{N, 1}^{\alpha, \beta}-I\right)\left(\mathcal{\hat{K}}_1 e\right)\left(z^{\frac{1}{\lambda}}\right) \right\|_{0, \omega^{\alpha, \beta, 1}}\notag\\
		& =\left\|\left(I_{N, 1}^{\alpha, \beta}-I\right)\left(\mathcal{\hat{K}}_1 e-\mathcal{T}_N \mathcal{\hat{K}}_1 e\right)\left(z^{\frac{1}{\lambda}}\right)\right\|_{0, w^{\alpha, \beta,1}} \notag\\
		& \leq \left\|\left(I_{N, 1}^{\alpha, \beta}\left(\mathcal{\hat{K}}_1e-\mathcal{T}_N \mathcal{\hat{K}}_1 e\right)\left(z^{\frac{1}{\lambda}}\right) \right\|_{0, \omega^{\alpha, \beta,1}}\right. +\left\|\left(\mathcal{\hat{K}}_1 e-\mathcal{T}_N \mathcal{\hat{K}}_1 e\right)\left(z^{\frac{1}{\lambda}}\right)\right\|_{0, \omega^{\alpha, \beta,1}}\notag\\
		& \leq C \left\|\left(\mathcal{\hat{K}}_1 e-\mathcal{T}_N \mathcal{\hat{K}}_1 e\right)\left(z^{\frac{1}{\lambda}}\right) \right\|_{\infty}\notag\\
		& \leq C N^{-\kappa} \left\| \mathcal{\hat{K}}_1 e\left(z^{\frac{1}{\lambda}}\right) \right\|_{0, \kappa} \leq C N^{-\kappa}\left\| e(\theta) \right\|_{\infty},\label{eq_E63_L2}
	\end{align}
	\begin{align}
		\left\|E_{7,3}(\theta)\right\|_{0, \omega^{\alpha, \beta,\lambda}}& \leq C N^{-\kappa}\|e(\theta)\|_{\infty},\label{eq_E73_L2}\\
		\left\|E_{6}(\theta)\right\|_{0, \omega^{\alpha, \beta,\lambda}}& \leq CN^{-m}\left\|\partial_{\theta}^m\varphi\left(\theta^{\frac{1}{\lambda}}\right)\right\|_{0, \omega^{\alpha+m, \beta+m, 1}}+ C N^{-\kappa}\notag\\
		&\quad\left(\|{e}^{*}(\theta)\|_{\infty}+\left\|E_8(\theta)\right\|_{\infty}+\left\|E_9(\theta)\right\|_{\infty}\right),\label{eq_E6_L2}\\
		\left\|E_{7}(\theta)\right\|_{0, \omega^{\alpha, \beta,\lambda}}& \leq CN^{-m}\left\|\partial_{\theta}^m\varphi\left(\theta^{\frac{1}{\lambda}}\right)\right\|_{0, \omega^{\alpha+m, \beta+m, 1}}+ C N^{-\kappa}\notag\\
		&\quad\left(\|{e}^{*}(\theta)\|_{\infty}+\left\|E_8(\theta)\right\|_{\infty}+\left\|E_9(\theta)\right\|_{\infty}\right).\label{eq_E7_L2}
	\end{align}
The error estimates for $\left\|e^*(\theta)\right\|_{0,\omega^{\alpha,\beta,1}}$ and $\left\|e(\theta)\right\|_{0,\omega^{\alpha,\beta,1}}$ are determined by \eqref{eq_E1_L2}-\eqref{eq_E7_L2},  together with \eqref{theorem_normcontrol_e*_w}, \eqref{theorem_normcontrol_e_w},  \eqref{eq_e_e*} and \eqref{eq_E8_infty}, yield
		\begin{align}
		\left\|e^*(\theta)\right\|_{0,\omega^{\alpha,\beta,1}} \leq&
		CN^{-m}\Bigl\{\left(N^{-\kappa}\log N+1\right)\mathcal{K}^*\left\|\varphi(\theta)\right\|_{\infty} \notag\\
		&+\left(N^{\frac{1}{2}-\kappa}+1\right)\left\|\partial_\theta^m \varphi\left(\theta^{\frac{1}{\lambda}}\right)\right\|_{0, \omega^{\alpha+m,\beta+m,1}}
		+\left(N^{\frac{1}{2}-\kappa}+1\right)\Phi\Bigr\},\\
		\left\|e(\theta)\right\|_{0,\omega^{\alpha,\beta,1}} \leq&
		CN^{-m}\Bigl\{\left(N^{-\kappa}\log N+1\right)\mathcal{K}^*\left\|\varphi(\theta)\right\|_{\infty} \notag\\
		&+\left(N^{\frac{1}{2}-\kappa}+2\right)\left\|\partial_\theta^m \varphi\left(\theta^{\frac{1}{\lambda}}\right)\right\|_{0, \omega^{\alpha+m,\beta+m,1}}
		+\left(N^{\frac{1}{2}-\kappa}+1\right)\Phi\Bigr\}. 
	\end{align}
\end{proof}
Finally, we have completed the theoretical proof for $L^{\infty}$-norm and the weighted $L^2$-norm.

\section{Numerical experiments}\label{section_Numerical experiments}
In this section, we will verify the numerical method and error estimates constructed in the previous section through some numerical examples. All the errors will be presented in the $L^{\infty}(I)$ and $L^{2}_{\omega^{\alpha,\beta,\lambda}}(I)$-norm in the following text, where $\alpha$ and $\beta$ are related coefficients. 
The primary objective of these numerical results is to demonstrate that the proposed method exhibits the potential to achieve high accuracy when the parameter $\lambda$ is chosen such that the functions $y\left(t^{\frac{1}{\lambda}}\right)$ and $y^{\prime}\left(t^{\frac{1}{\lambda}}\right)$ behave smoothly enough.
The left figures of the numerical experiments below show $\left\|e(\theta)\right\|_{0,\omega^{\alpha,\beta,1}}$ and $\|e(\theta)\|_{\infty}$, while the right ones show $\left\|e^*(\theta)\right\|_{0,\omega^{\alpha,\beta,1}}$ and $\|e^*(\theta)\|_{\infty}$.
\begin{example}\label{example_third_NO1}
Consider the following problem: 
	\begin{equation}
		\left\{\begin{array}{l}
			t^\gamma y^{\prime}(t)=t^{\frac{5}{3}} y(t)+t^{\frac{5}{3}} y(\varepsilon t)+g(t)+\int_0^t(t-s)^{-\mu}s^{\mu+\gamma-1} e^{s^{1-\mu}} y(s) d s\notag\\
			\qquad\qquad+\varepsilon^{-\gamma}\int_0^t(t-\tau)^{-\mu}\tau^{\mu+\gamma-1} e^{\tau^{1-\mu}} y(\tau) d \tau, \quad t \in I, \\
			y(0)=0.
		\end{array}\right.
	\end{equation}
 where $g(t)=\left(1-(1-\mu) t^{1-\mu}+t\right) e^{-t^{1-\mu}}+(1+e)B(1-\mu, \mu+\gamma-1)t-t^{\frac{8}{3}}e^{-t^{1-\mu}}-t^{\frac{5}{3}}(\varepsilon t)e^{-(\varepsilon t)^{1-\mu}}, B(\cdot, \cdot)$ for the Beta function.
 
In this example, the exact solution is $y(t)=t e^{-t^{1-\mu}}$. 
We take $\mu=\frac{1}{2}, T=1, \varepsilon=0.5$. Through transformation $t=\theta^{\frac{1}{\lambda}}$, $y\left(\theta^{\frac{1}{\lambda}}\right)$ and $y^{\prime}\left(\theta^{\frac{1}{\lambda}}\right)$ are both analytical even if $y(t)$ and $y^{\prime}(t)$ are weakly
 singular at $t = 0$. 
 
 In Figures \ref{Fig_third_NO1_0.5} and \ref{Fig_third_NO1_1}, we can see the errors and convergence results intuitively. Besides,Tables \ref{tabular_third_NO1_e} and \ref{tabular_third_NO1_e*} show the specific datas of the error bounds under different norms.
When $\lambda=1$, the namely polynomial collocation method can only arrive at algebraic convergence, with a minimum error of $10^{-3.5}$ magnitude at $N=20$. Meanwhile, exponential convergence can be easily achieved, with a minimum error of $4.91123\times 10^{-12}$ at $N=12$ when $\lambda=\frac{1}{2}$. It is obvious that fractional collocation method significantly outperforms polynomial collocation method in terms of accuracy.

		\begin{figure}
		\subfloat[$\lambda=\frac{1}{2}:\left\|e(\theta)\right\|_{0,\omega^{\alpha,\beta,1}},\|e(\theta)\|_{\infty}$]{\includegraphics[width=0.5\textwidth]{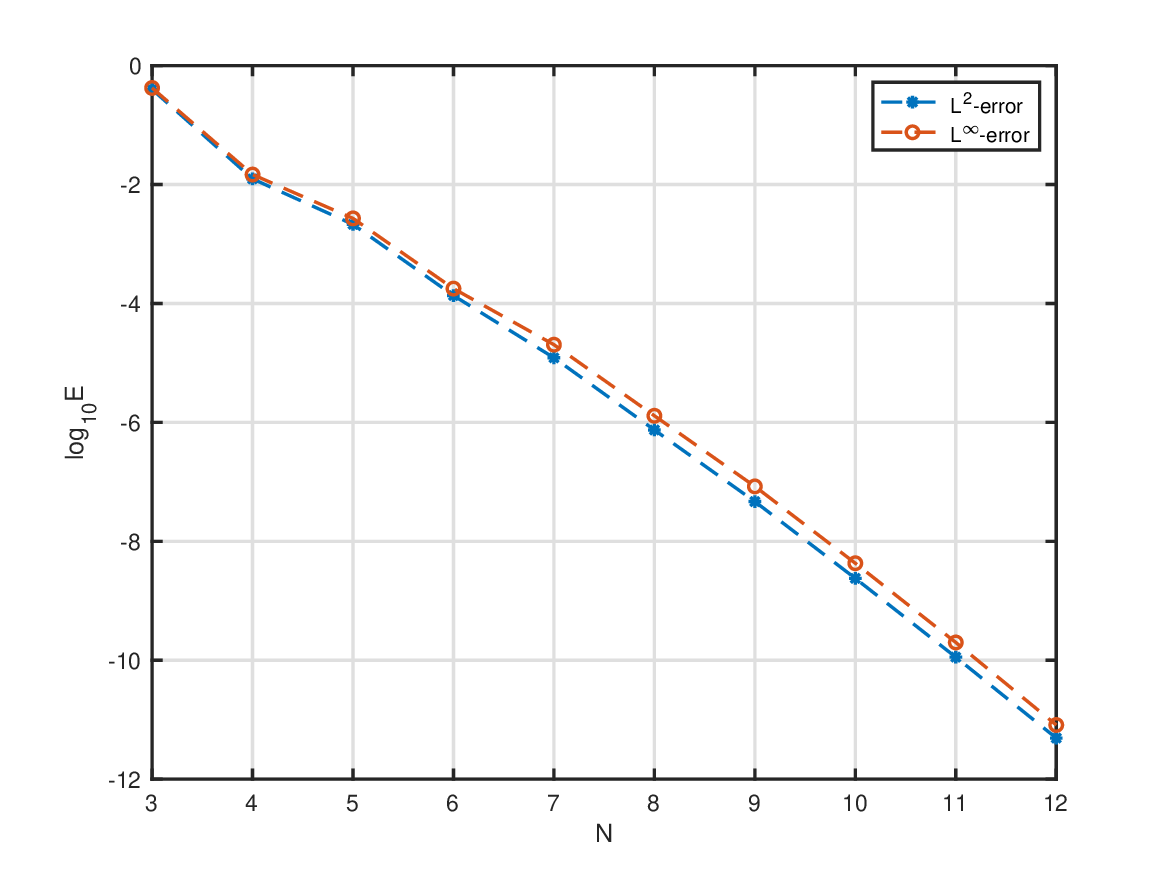}}%
		\hfill
		\subfloat[$\lambda=\frac{1}{2}:\left\|e^*(\theta)\right\|_{0,\omega^{\alpha,\beta,1}},\|e^*(\theta)\|_{\infty}$]{\includegraphics[width=0.5\textwidth]{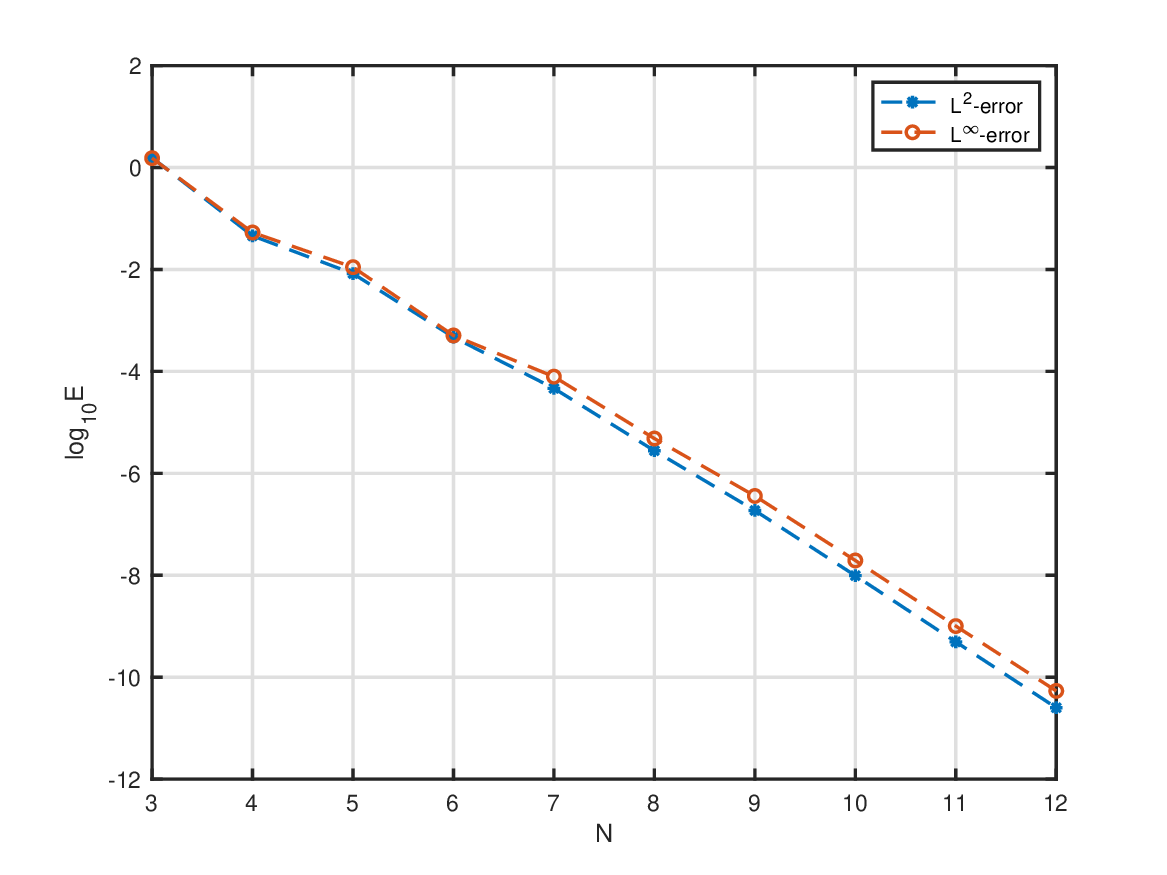}}
		\caption{\bf{Example \ref{example_third_NO1}} with $\lambda=\frac{1}{2}$}
		\label{Fig_third_NO1_0.5}
	\end{figure}
	
	\begin{figure}
		\subfloat[$\lambda=1:\left\|e(\theta)\right\|_{0,\omega^{\alpha,\beta,1}},\|e(\theta)\|_{\infty}$]{\includegraphics[width=0.5\textwidth]{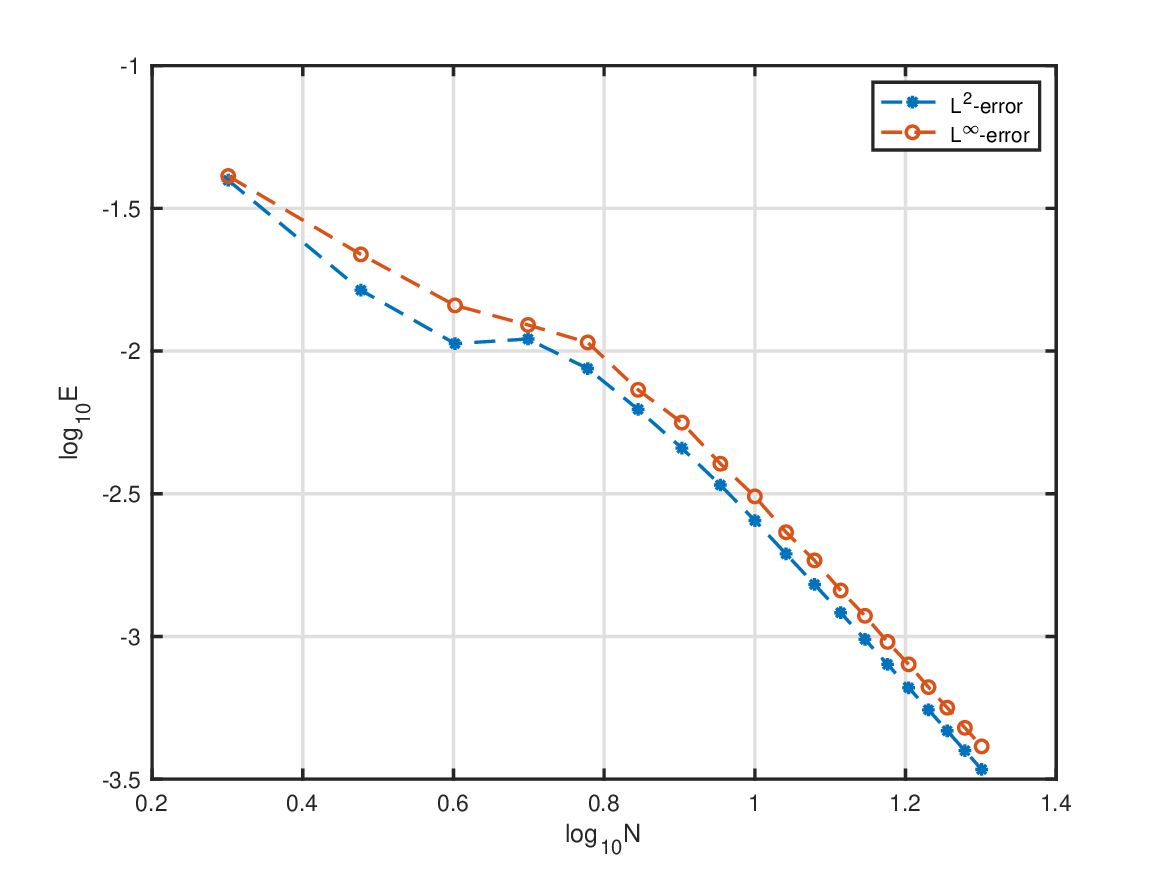}}%
		\hfill
		\subfloat[$\lambda=1:\left\|e^*(\theta)\right\|_{0,\omega^{\alpha,\beta,1}},\|e^*(\theta)\|_{\infty}$]{\includegraphics[width=0.5\textwidth]{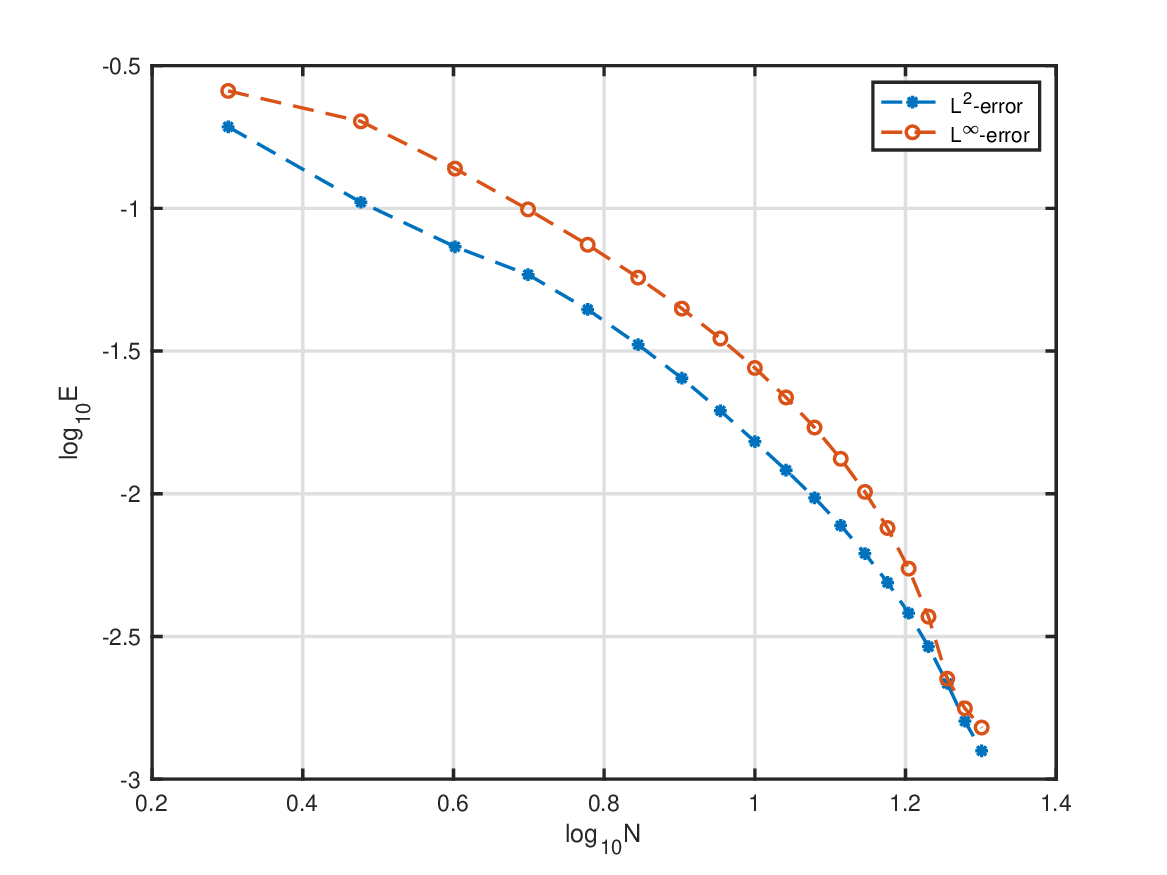}}
		\caption{\bf{Example \ref{example_third_NO1}} with $\lambda=1$}
		\label{Fig_third_NO1_1}
	\end{figure}
	
	\begin{table}[!ht]
		\centering
		\caption{\bf{Example \ref{example_third_NO1}} with $\lambda=\frac{1}{2}$: $\left\|e(\theta)\right\|_{0,\omega^{\alpha,\beta,1}}$ and $\|e(\theta)\|_{\infty}$.}
		\begin{tabular}{llllll}
			\hline$N$& 4 & 6 & 8 & 10 & 12\\
			\hline$L^2$-error  & $1.24718 \mathrm{e}-02$ & $1.35898 \mathrm{e}-04$ & $7.46163 \mathrm{e}-07$ & $2.38088 \mathrm{e}-09$ & $4.91123 \mathrm{e}-12$\\
			$L^{\infty}$-error  & $1.47322\mathrm{e}-02$ & $1.78065\mathrm{e}-04$ & $1.29258 \mathrm{e}-06$ & $4.27947\mathrm{e}-09$ &$8.15620 \mathrm{e}-12$\\
			\hline
		\end{tabular}
		\label{tabular_third_NO1_e}
	\end{table}
	
	\begin{table}[!ht]
		\centering
		\caption{\bf{Example \ref{example_third_NO1}} with $\lambda=\frac{1}{2}$: $\left\|e^*(\theta)\right\|_{0,\omega^{\alpha,\beta,1}}$ and $\|e^*(\theta)\|_{\infty}$.}
		\begin{tabular}{llllll}
			\hline$N$& 4 & 6 & 8 & 10 & 12\\
			\hline$L^2$-error  & $4.57408\mathrm{e}-02$ & $4.69971 \mathrm{e}-04$ & $2.78318 \mathrm{e}-06$ & $9.84189\mathrm{e}-09$ & $2.52306 \mathrm{e}-11$\\
			$L^{\infty}$-error& $5.32377\mathrm{e}-02$ & $5.07685\mathrm{e}-04$ & $4.82471\mathrm{e}-06$ & $1.95348 \mathrm{e}-08$ & $5.39264 \mathrm{e}-11$\\
			\hline
		\end{tabular}
		\label{tabular_third_NO1_e*}
	\end{table}
\end{example}

\begin{example}\label{example_third_NO2}
Consider the following linear VIDEs:
	\begin{equation}
	\left\{\begin{array}{l}
		t^\gamma y^{\prime}(t)=t^{\frac{5}{3}} y(t)+t^{\frac{5}{3}} y(\varepsilon t)+g(t)
		+\int_0^t \frac{\sqrt{3}}{3 \pi}(t-s)^{-\mu} s^{\mu+\gamma-1} e^s y(s) d s\notag\\
		\qquad\qquad+{\varepsilon}^{-\gamma}\int_0^{\varepsilon t} \frac{\sqrt{3}}{3 \pi}(\varepsilon t-\tau)^{-\mu} \tau^{\mu+\gamma-1} e^{\tau} y(\tau) d \tau, \quad t \in[0,T],  \\
		y(0)=0 .
	\end{array}\right.
	\end{equation}
where $g$ is a given function.

Let $\varepsilon = 0.66, \mu = \frac{1}{3}, \gamma = 1, T=\frac{1}{2}$, we set $g(t)$ such that the exact solution is $y(t) = t^{1+\mu}e^{-t}$, where $\mu \in (0,1)$, $y(t)$ has a weakly singularity at $t = 0^+$. In fact, considering the structure of the solution, we need to choose $\lambda=\frac{1}{3}$ so that $y\left(t^{\frac{1}{\lambda}}\right)$ and $y^{\prime}\left(t^{\frac{1}{\lambda}}\right)$ meet the requirements. Numerical convegence is shown in Figures \ref{Fig_third_NO2_0.5} and \ref{Fig_third_NO2_1}. Tables \ref{tabular_third_NO2_e} exhibit the errors reaching $10^{-8}$. The numerical results match the theorem analysis.
		\begin{figure}
		\subfloat[$\lambda=\frac{1}{3}:\left\|e(\theta)\right\|_{0,\omega^{\alpha,\beta,1}},\|e(\theta)\|_{\infty}$]{\includegraphics[width=0.5\textwidth]{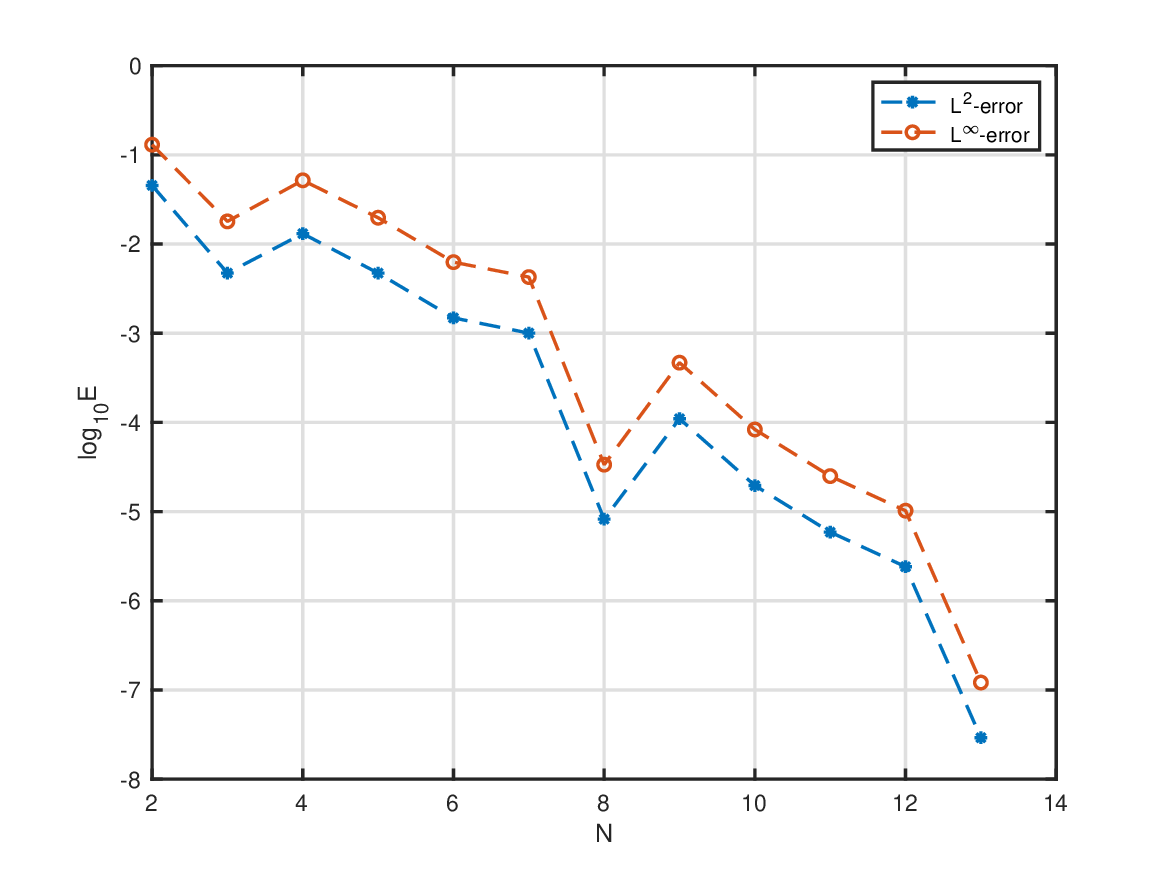}}%
		\hfill
		\subfloat[$\lambda=\frac{1}{3}:\left\|e^*(\theta)\right\|_{0,\omega^{\alpha,\beta,1}},\|e^*(\theta)\|_{\infty}$]{\includegraphics[width=0.5\textwidth]{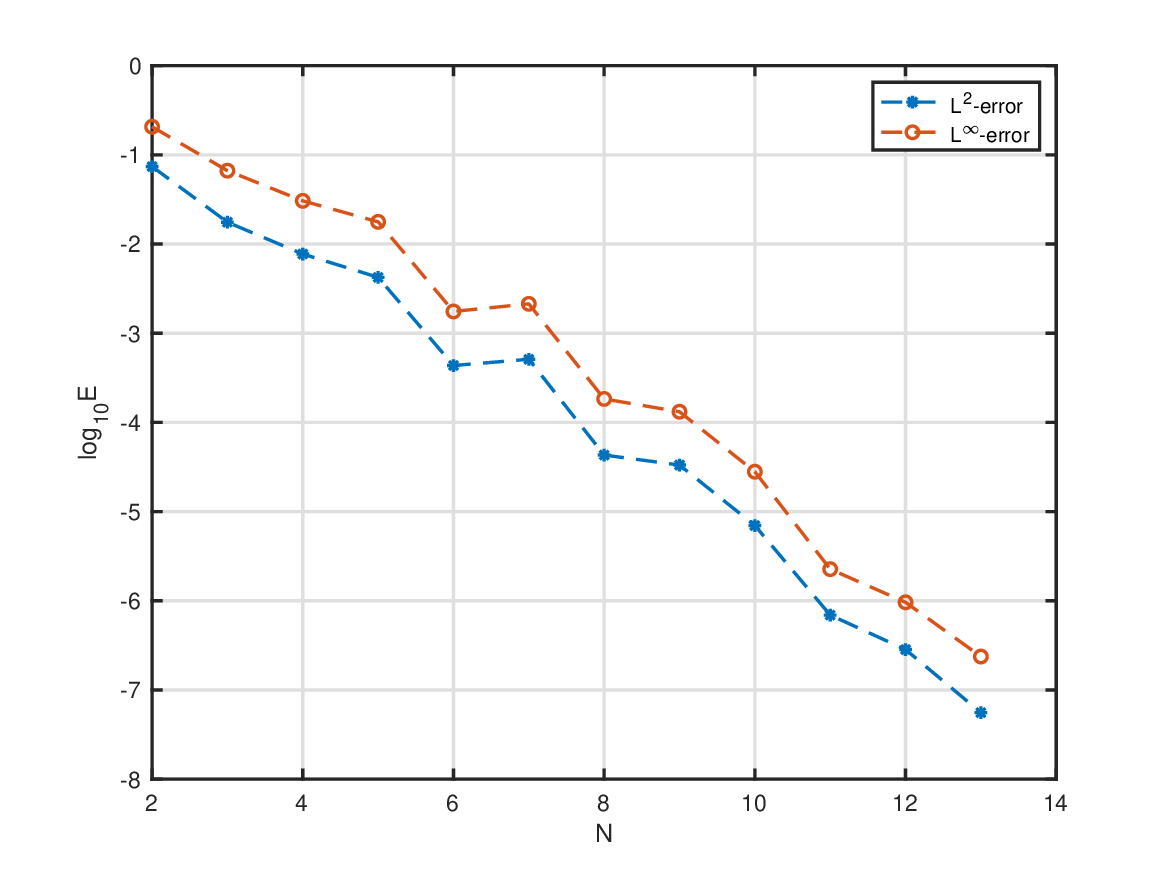}}
		\caption{\bf{Example \ref{example_third_NO2} } with $\lambda=\frac{1}{3}$}
		\label{Fig_third_NO2_0.5}
	\end{figure}
	
	\begin{figure}
		\subfloat[$\lambda=1:\left\|e(\theta)\right\|_{0,\omega^{\alpha,\beta,1}},\|e(\theta)\|_{\infty}$]{\includegraphics[width=0.5\textwidth]{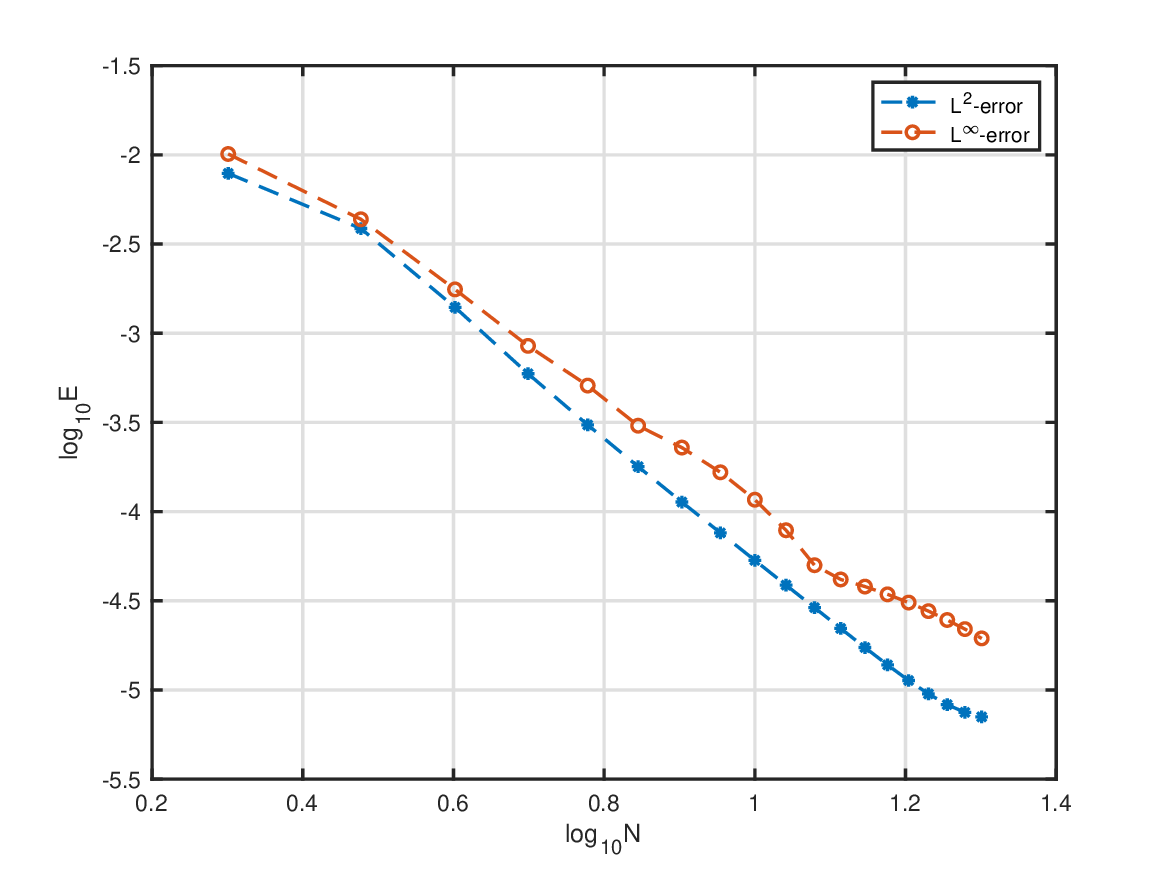}}%
		\hfill
		\subfloat[$\lambda=1:\left\|e^*(\theta)\right\|_{0,\omega^{\alpha,\beta,1}},\|e^*(\theta)\|_{\infty}$]{\includegraphics[width=0.5\textwidth]{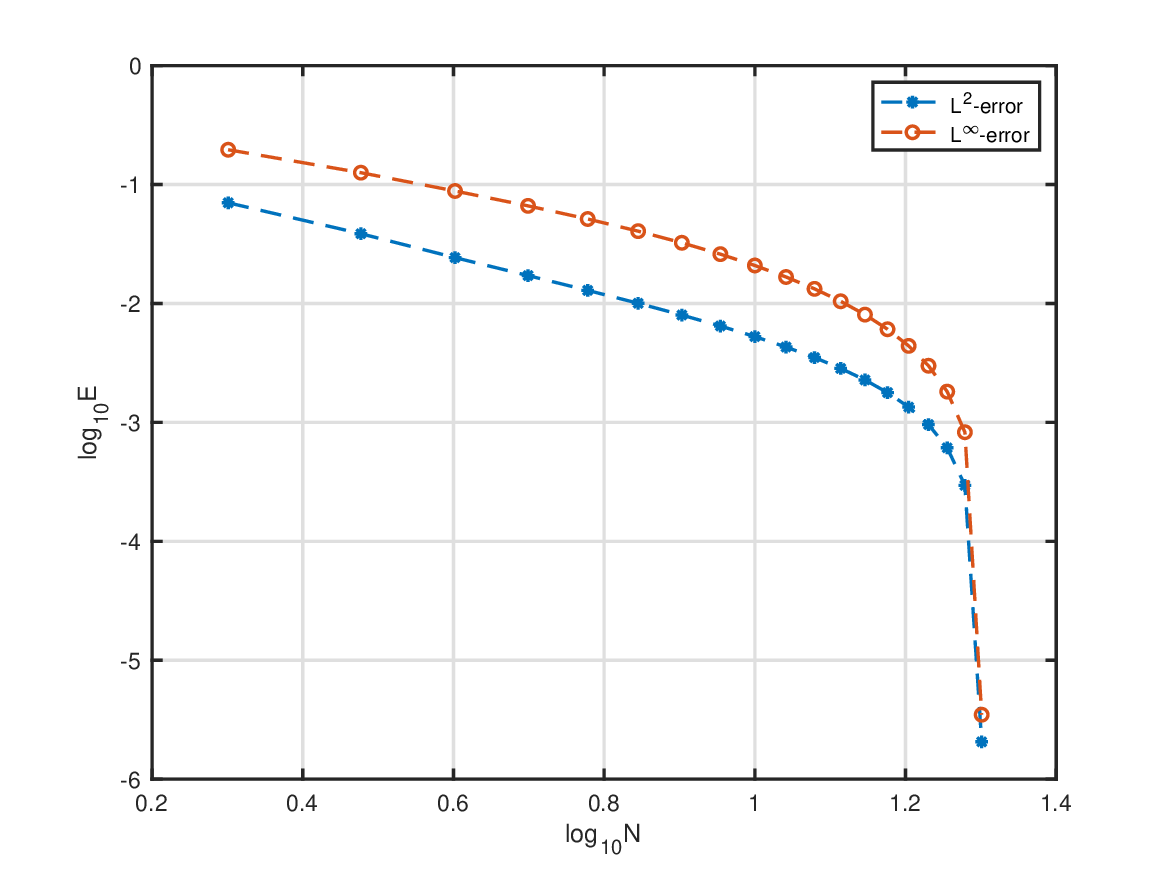}}
		\caption{\bf{Example \ref{example_third_NO2} } with $\lambda=1$}
		\label{Fig_third_NO2_1}
	\end{figure}
	
	\begin{table}[!ht]
		\centering
		\caption{\bf{Example \ref{example_third_NO2} } with $\lambda=\frac{1}{2}$: $\left\|e(\theta)\right\|_{0,\omega^{\alpha,\beta,1}}$ and $\|e(\theta)\|_{\infty}$.}
		\begin{tabular}{llllll}
			\hline$N$ & 2 & 5 & 10 & 11 & 13\\
			\hline$L^2$-error & $4.53443 \mathrm{e}-02$ & $4.71548 \mathrm{e}-03$ & $1.95915 \mathrm{e}-05$ & $5.88895 \mathrm{e}-06$ & $2.91639 \mathrm{e}-08$\\
			$L^{\infty}$-error & $1.29846\mathrm{e}-01$ & $1.97082\mathrm{e}-02$ & $8.32625\mathrm{e}-05$ & $2.49951 \mathrm{e}-05$ &$1.21192\mathrm{e}-07$\\
			\hline
		\end{tabular}
		\label{tabular_third_NO2_e}
	\end{table}
	
	\begin{table}[!ht]
		\centering
		\caption{\bf{Example \ref{example_third_NO2}} with  $\lambda=\frac{1}{2}$: $\left\|e^*(\theta)\right\|_{0,\omega^{\alpha,\beta,1}}$ and $\|e^*(\theta)\|_{\infty}$.}
		\begin{tabular}{llllll}
			\hline$N$& 2 & 5 & 10 & 11 & 13\\
			\hline$L^2$-error & $7.38681\mathrm{e}-02$ & $4.23023\mathrm{e}-03$ & $6.97779\mathrm{e}-06$ & $6.91387\mathrm{e}-07$ & $5.57349\mathrm{e}-08$\\
			$L^{\infty}$-error  & $2.06450\mathrm{e}-01$ & $1.77134\mathrm{e}-02$ & $2.79917\mathrm{e}-05$ & $2.26236\mathrm{e}-06$ &$2.37082\mathrm{e}-07$\\
			\hline
		\end{tabular}
		\label{tabular_third_NO2_e*}
	\end{table}
\end{example}

\begin{example}\label{example_third_NO3}
Continue to consider the equation in Example \ref{example_third_NO2}, with the given function $g$ be
	$$
	\begin{aligned}
		g(t)=&e^{-t}\left((t^{w_1}(1+w_1-t)+t^{w_2}(1+w_2-t))\right)
				-t^{\frac{5}{3}-\gamma}(u(t)+u(\varepsilon t))\\
			  	&-\frac{\sqrt3}{3\pi}B(1-\mu,w_1+\mu+\gamma-1)t^{1+w_1}(e^{1+w_1}+1)\\
				&-\frac{\sqrt3}{3\pi}B(1-\mu,w_2+\mu+\gamma-1)t^{1+w_2}(e^{1+w_2}+1).
	\end{aligned}
	$$
where the exact solution $y(t)=(t^{1+w_1}+t^{1+w_2})e^{-t}$ is  constructed for a more complicated situation.

As is shown in the equation, we cannot always keep $y\left(t^{\frac{1}{\lambda}}\right)$ and $y^{\prime}\left(t^{\frac{1}{\lambda}}\right)$ analytic with the choice of $\lambda$. This example can test the situation that  $y\left(t^{\frac{1}{\lambda}}\right)$ and $y^{\prime}\left(t^{\frac{1}{\lambda}}\right)$ behave not smoothly enough.

Let $\varepsilon$, $ \mu$, $ \gamma$ and $\lambda$ be the same as Example \ref{example_third_NO2}, then $T=1, w_1=\frac{1}{2}, w_2=\sqrt{2}$. Comparing the numerical results under $\lambda=0.5$ and $\lambda=1$ in Figures \ref{Fig_third_NO3_0.5} and \ref{Fig_third_NO3_1}, it is obvious to see that fractional collocation method has better convergence result than the polynomial one. 

 Besides, the convergence results in Figure \ref{Fig_third_NO3_0.5} and Table \ref{tabular_third_NO3_e} behave better than the ones in Figure \ref{Fig_third_NO2_0.5} and Table \ref{tabular_third_NO2_e} although this example is more complicated and the interval expansion, we infer the reason is that $\mu=\frac{1}{2}, \lambda=\frac{1}{2}$ is more suitable for the proposed method than $\mu=\frac{1}{3}, \lambda=\frac{1}{3}$.
		\begin{figure}
		\subfloat[$\lambda=\frac{1}{2}:\left\|e(\theta)\right\|_{0,\omega^{\alpha,\beta,1}},\|e(\theta)\|_{\infty}$]{\includegraphics[width=0.5\textwidth]{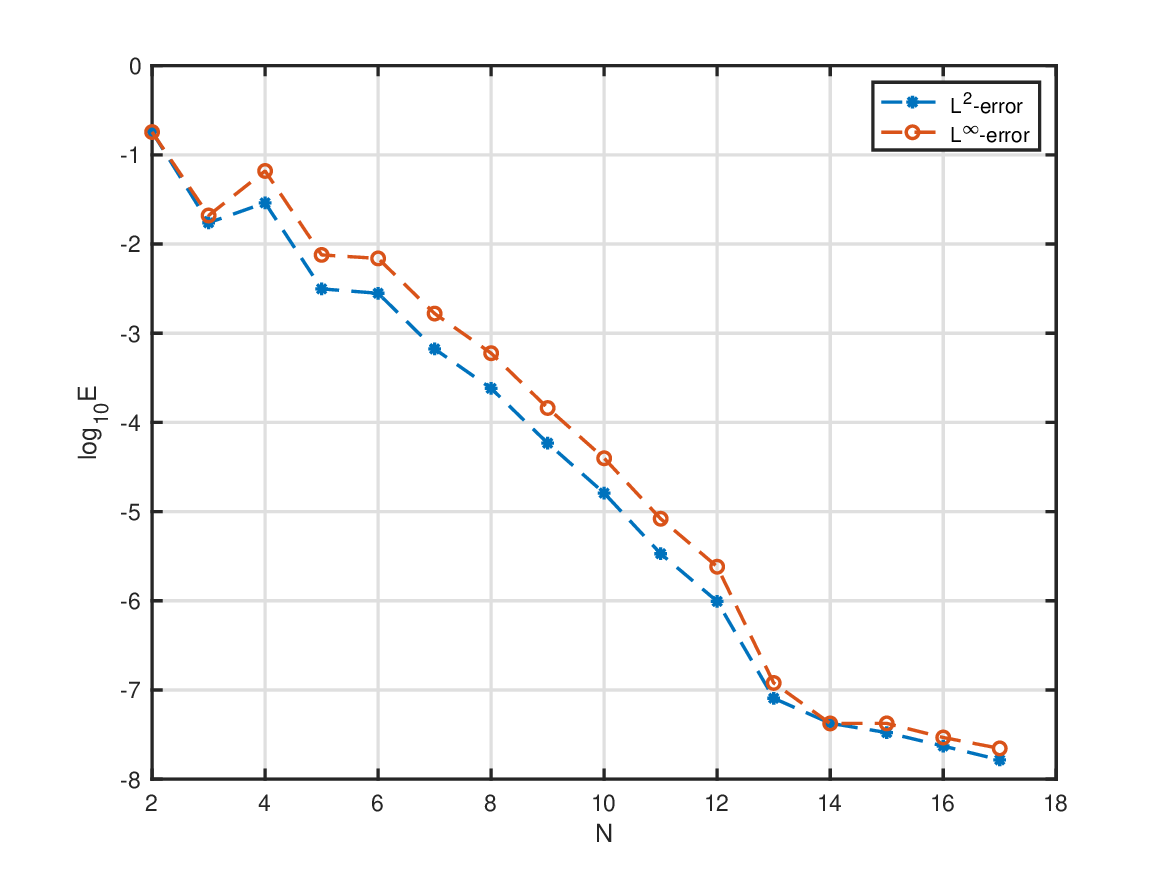}}%
		\hfill
		\subfloat[$\lambda=\frac{1}{2}:\left\|e^*(\theta)\right\|_{0,\omega^{\alpha,\beta,1}},\|e^*(\theta)\|_{\infty}$]{\includegraphics[width=0.5\textwidth]{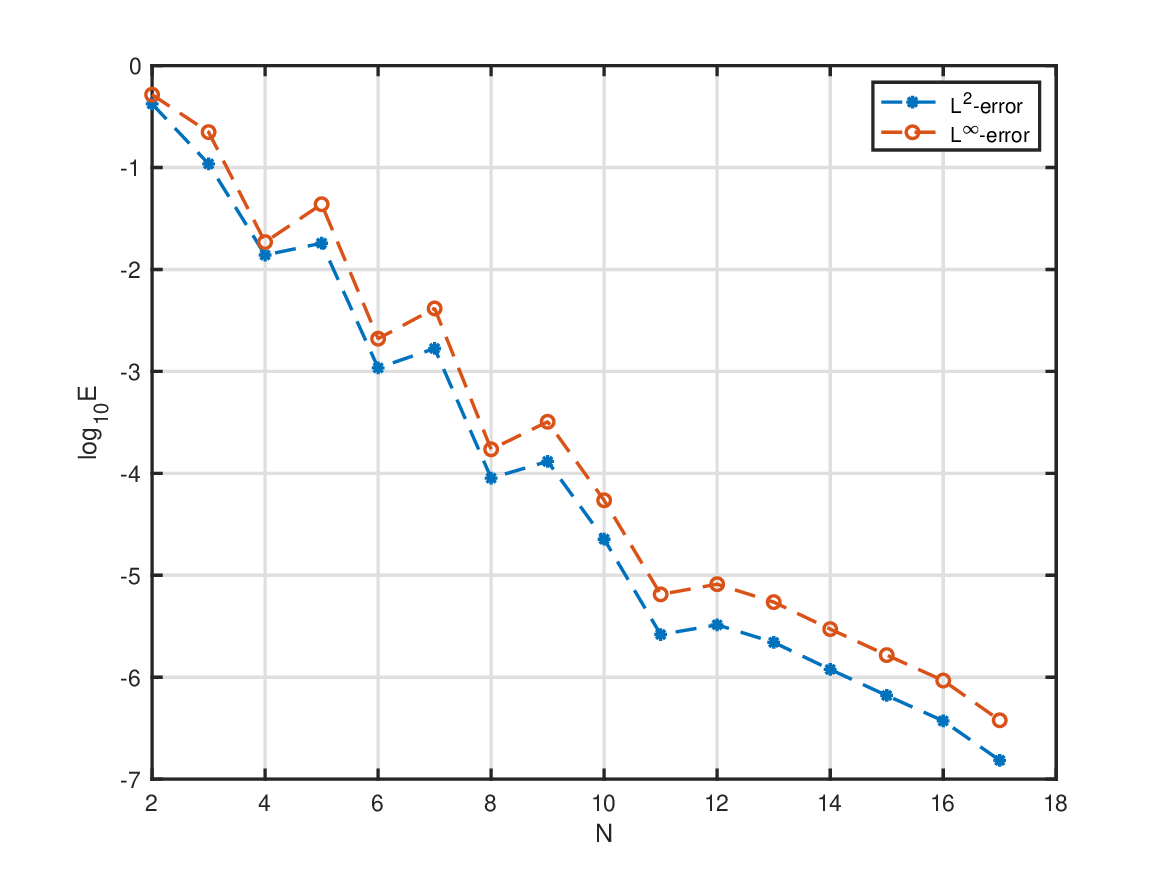}}
		\caption{\bf{Example \ref{example_third_NO3} } with $\lambda=\frac{1}{2}$}
		\label{Fig_third_NO3_0.5}
	\end{figure}
	
	\begin{figure}
		\subfloat[$\lambda=1:\left\|e(\theta)\right\|_{0,\omega^{\alpha,\beta,1}},\|e(\theta)\|_{\infty}$]{\includegraphics[width=0.5\textwidth]{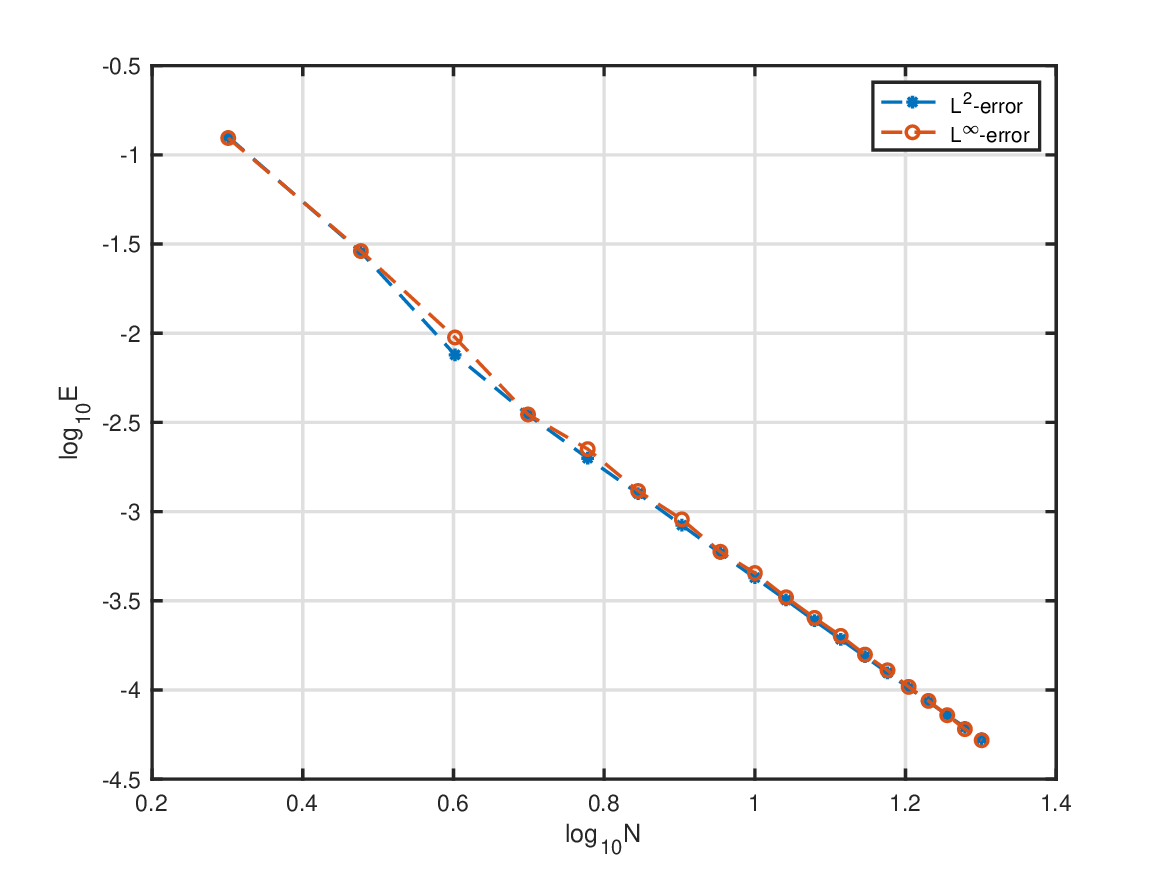}}%
		\hfill
		\subfloat[$\lambda=1:\left\|e^*(\theta)\right\|_{0,\omega^{\alpha,\beta,1}},\|e^*(\theta)\|_{\infty}$]{\includegraphics[width=0.5\textwidth]{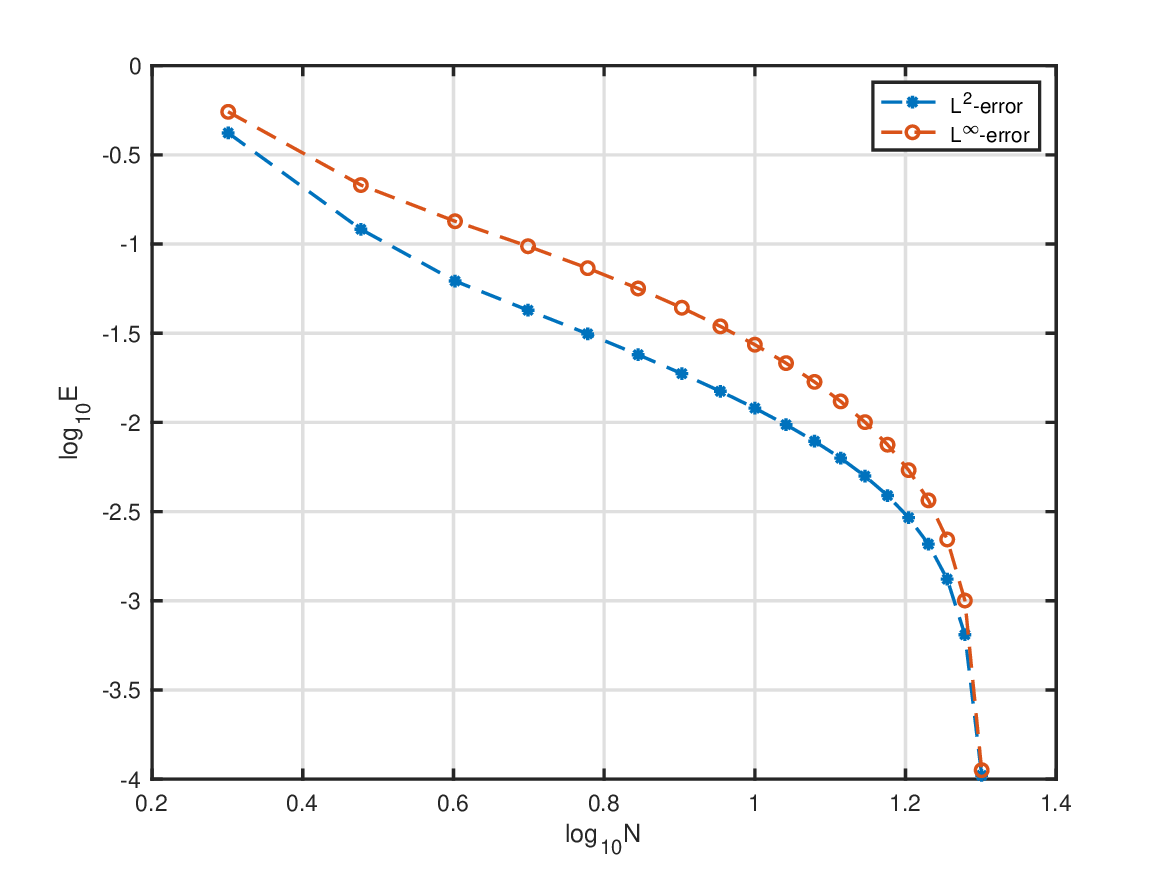}}
		\caption{\bf{Example \ref{example_third_NO3}} with $\lambda=1$}
		\label{Fig_third_NO3_1}
	\end{figure}

	\begin{table}[!ht]
		\centering
		\caption{\bf{Example \ref{example_third_NO3} } with $\lambda=\frac{1}{2}$: $\left\|e(\theta)\right\|_{0,\omega^{\alpha,\beta,1}}$ and $\|e(\theta)\|_{\infty}$.}
		\begin{tabular}{llllll}
			\hline$N$ & 7 & 9 & 11 & 13 & 17\\
			\hline$L^2$-error & $6.66271\mathrm{e}-04$ & $5.84874 \mathrm{e}-05$ & $3.37130 \mathrm{e}-06$ & $8.07408 \mathrm{e}-08$ & $1.64438 \mathrm{e}-08$\\
			$L^{\infty}$-error & $1.66020\mathrm{e}-03$ & $1.44999\mathrm{e}-04$ & $8.30929\mathrm{e}-06$ & $1.19880\mathrm{e}-07$ &$2.21299\mathrm{e}-08$\\
			\hline
		\end{tabular}
		\label{tabular_third_NO3_e}
	\end{table}
	
	\begin{table}[!ht]
		\centering
		\caption{\bf{Example \ref{example_third_NO3}} with  $\lambda=\frac{1}{2}$: $\left\|e^*(\theta)\right\|_{0,\omega^{\alpha,\beta,1}}$ and $\|e^*(\theta)\|_{\infty}$.}
		\begin{tabular}{llllll}
			\hline$N$& 7 & 10 & 13 & 15 & 17\\
			\hline$L^2$-error & $1.68075\mathrm{e}-03$ & $1.31019 \mathrm{e}-04$ & $2.62447\mathrm{e}-06$ & $2.19824\mathrm{e}-06$ & $1.53059\mathrm{e}-07$\\
			$L^{\infty}$-error  & $4.14564\mathrm{e}-03$ & $3.20731 \mathrm{e}-04$ & $6.49866\mathrm{e}-06$ & $5.46109\mathrm{e}-06$ & $3.78046\mathrm{e}-07$\\
			\hline
		\end{tabular}
		\label{tabular_third_NO3_e*}
		\end{table}
\end{example}

\begin{example}\label{example_third_NO4}
	Now we consider the problem with the unknown exact solution:
	\begin{equation}
		\left\{\begin{array}{l}
			t^{\gamma}y^{\prime}(t)=p(t)y(t)+t^{\frac{3}{2}}e^{-t} y(\varepsilon t)+\sin (2 t)-\int_0^t(t-s)^{-\mu}s^{\mu+\gamma-1}s^{1+\mu}(1+sin(ts))y(s) d s \\
			\qquad \quad+\varepsilon ^{-\gamma}\int_0^{\varepsilon t}(\varepsilon t-\tau)^{-\mu}\tau^{\mu+\gamma-1}K_2(t,\tau)y(\tau) d \tau, t \in[0,\frac{1}{2}] \\
			y(0)=3.
		\end{array}\right.
	\end{equation}
	where $\gamma=1, \varepsilon=0.5$ and $p(t)=t^{\frac{3}{2}}\cos(t), K_2(t,\tau)=\tau^{1+\mu}(1+\cos(t\tau))$. The reference "exact" solution is computed by $\lambda=\frac{1}{2}$ and $N=18$, then we choose $\lambda=\frac{1}{2}$ and $1$ for the numerical solution. We do not know the structure and properties of the exact solution, so it is much more complex to test the effectiveness and accuracy of the fractional collocation method. 
	
	The first case is $\mu=\frac{1}{2}$, as expected, the results exhibited exponential convergence when $\lambda=\frac{1}{2}$ and  algebraic convergence $\lambda=1$ respectively in Figures \ref{Fig_third_NO4_0.5}, \ref{Fig_third_NO4_1} and Tables \ref{tabular_third_NO4_e} and \ref{tabular_third_NO4_e*}.

	\begin{figure}
		\subfloat[$\lambda=\frac{1}{2},\left\|e(\theta)\right\|_{0,\omega^{\alpha,\beta,1}},\|e(\theta)\|_{\infty}$]{\includegraphics[width=0.5\textwidth]{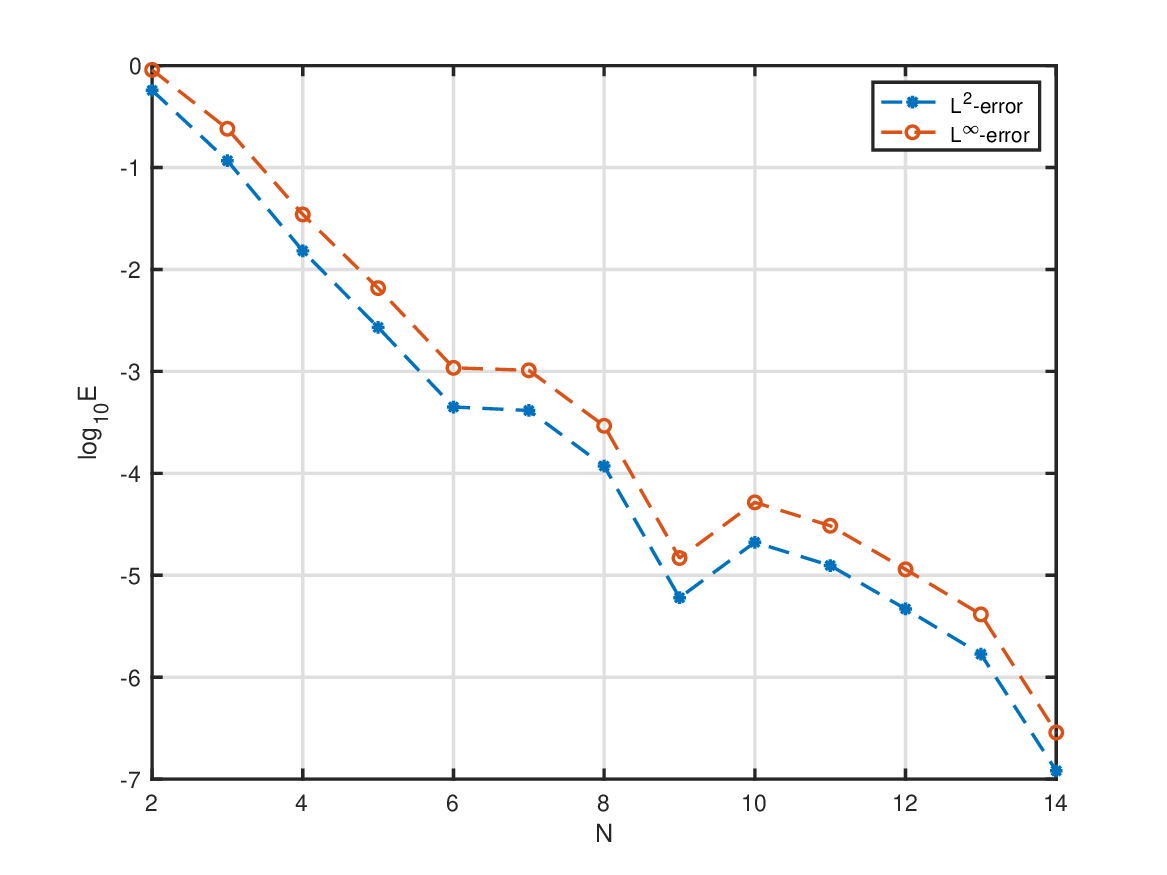}}%
		\hfill
		\subfloat[$\lambda=\frac{1}{2},\left\|e^*(\theta)\right\|_{0,\omega^{\alpha,\beta,1}},\|e^*(\theta)\|_{\infty}$]{\includegraphics[width=0.5\textwidth]{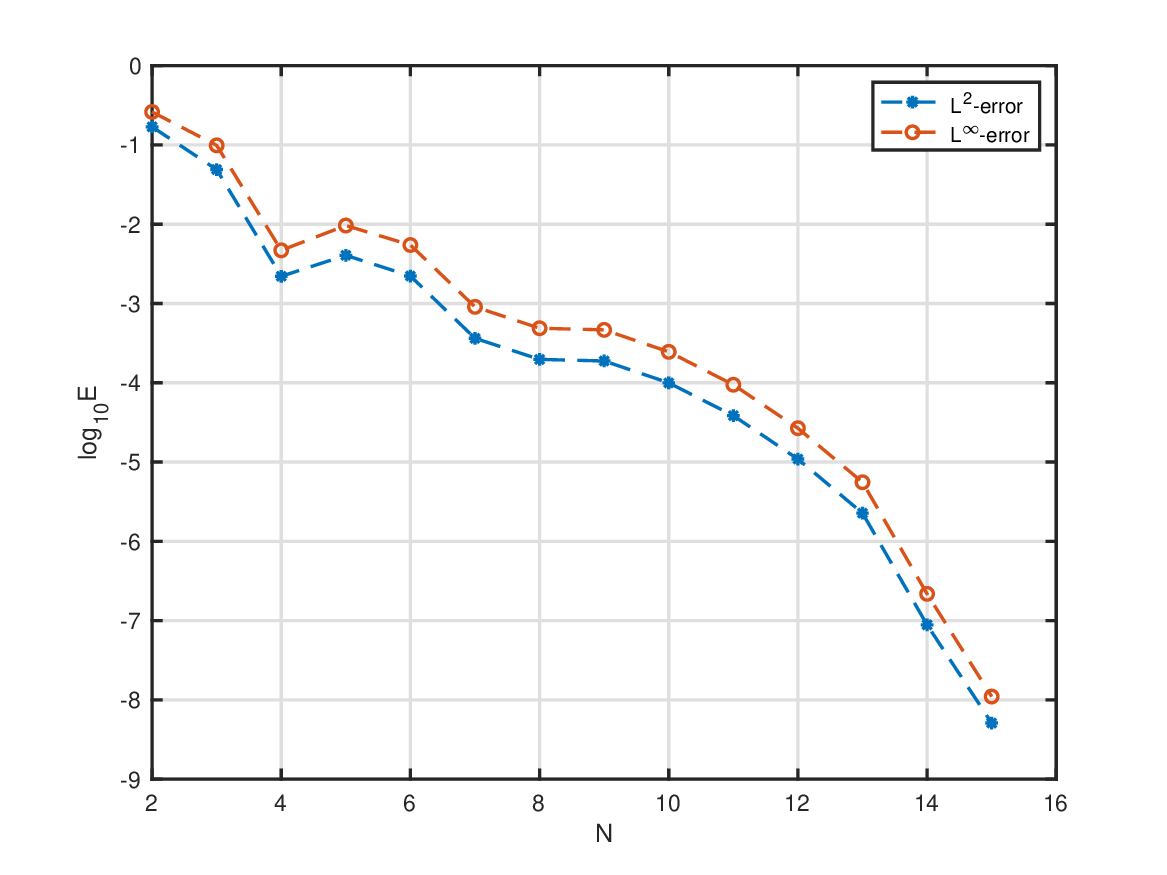}}
		\caption{\bf{Example \ref{example_third_NO4} } with $\lambda=\frac{1}{2}$}
		\label{Fig_third_NO4_0.5}
	\end{figure}
	
	\begin{figure}
		\subfloat[$\lambda=1,\left\|e(\theta)\right\|_{0,\omega^{\alpha,\beta,1}},\|e(\theta)\|_{\infty}$]{\includegraphics[width=0.5\textwidth]{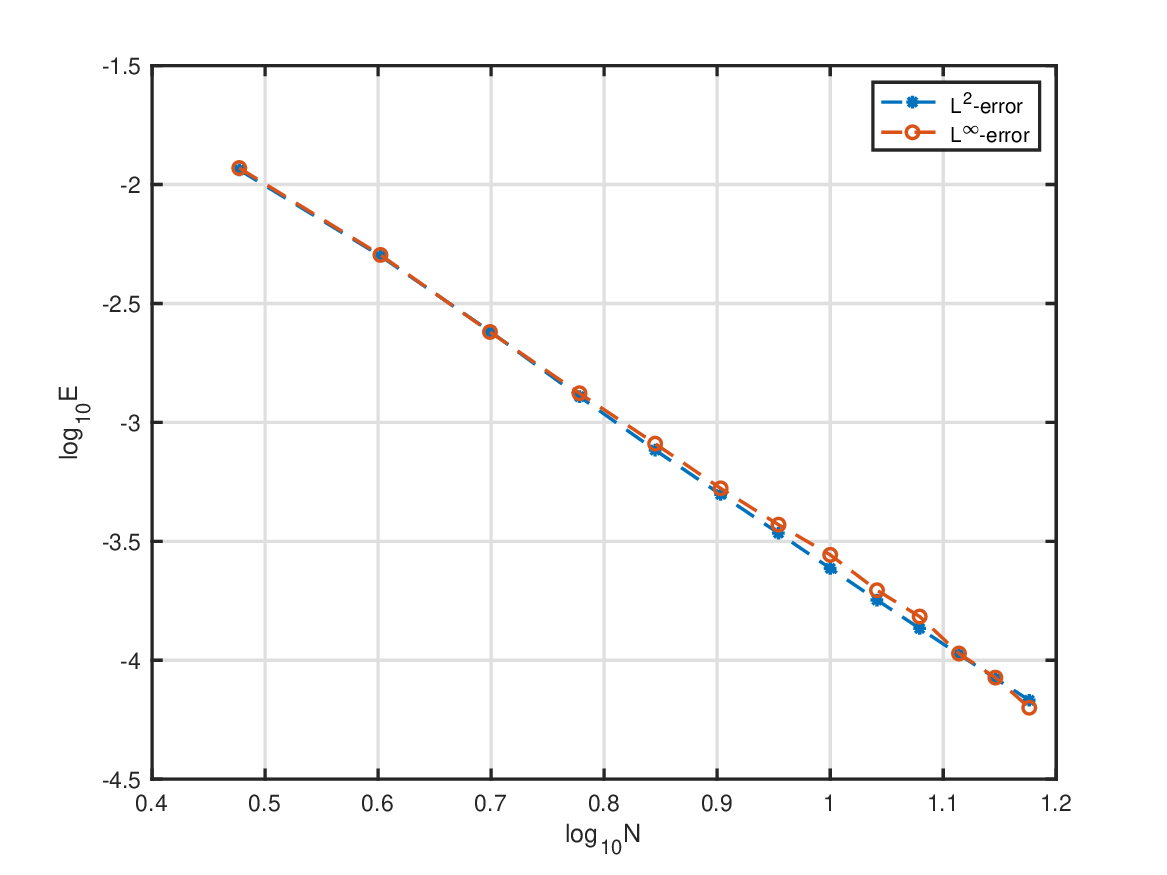}}%
		\hfill
		\subfloat[$\lambda=1,\left\|e^*(\theta)\right\|_{0,\omega^{\alpha,\beta,1}},\|e^*(\theta)\|_{\infty}$]{\includegraphics[width=0.5\textwidth]{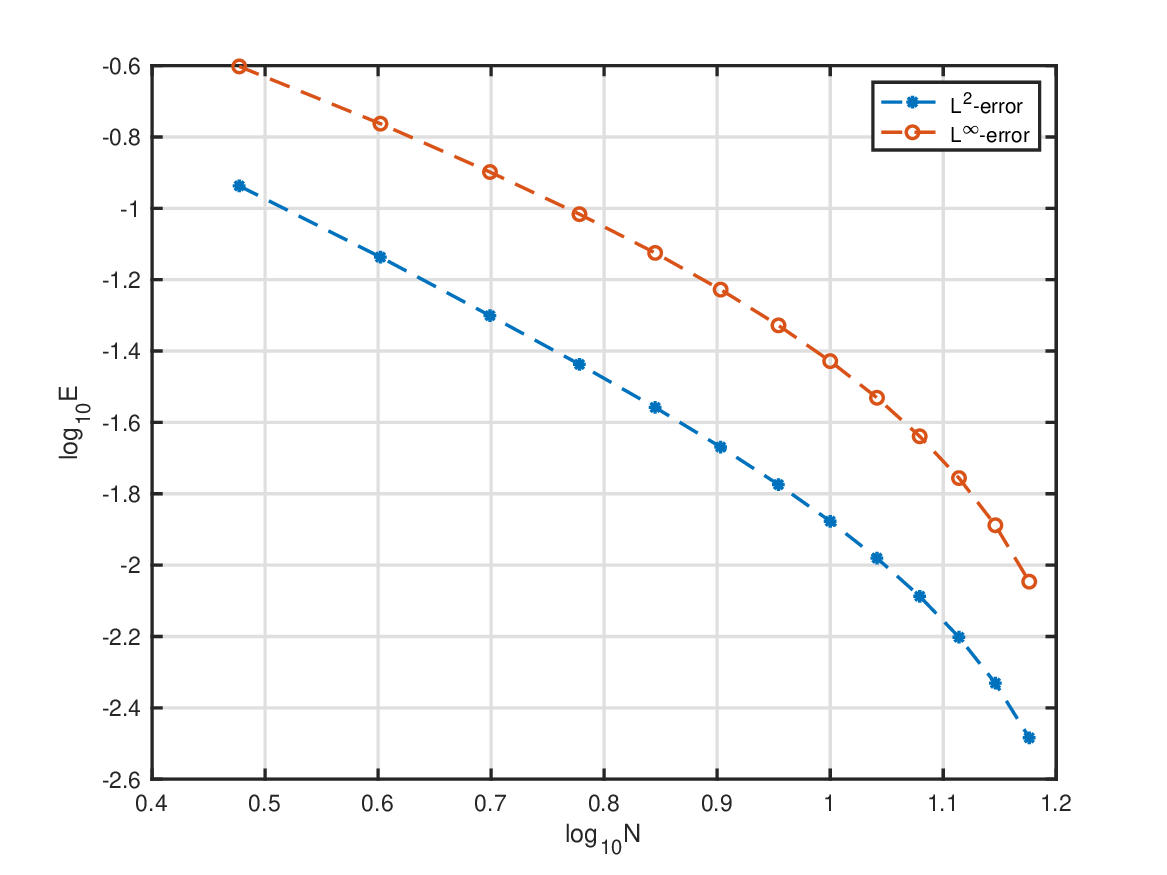}}
		\caption{\bf{Example \ref{example_third_NO4} } with $\lambda=1$}
		\label{Fig_third_NO4_1}
	\end{figure}
	
	\begin{table}[!ht]
		\centering
		\caption{\bf{Example \ref{example_third_NO4} } with $\lambda=\frac{1}{2}$: $\left\|e(\theta)\right\|_{0,\omega^{\alpha,\beta,1}}$ and $\|e(\theta)\|_{\infty}$.}
		\begin{tabular}{llllll}
			\hline$N$ & 5 & 7 & 10 & 12 & 14\\
			\hline$L^2$-error & $2.70464\mathrm{e}-03$ & $4.13840 \mathrm{e}-04$ & $2.10939 \mathrm{e}-05$ & $4.67595\mathrm{e}-06$ & $1.20788\mathrm{e}-07$\\
			$L^{\infty}$-error & $6.56633\mathrm{e}-03$ & $1.02585\mathrm{e}-03$ & $5.19343 \mathrm{e}-05$ &$1.14426\mathrm{e}-05$ &$2.87010\mathrm{e}-07$\\
			\hline
		\end{tabular}
		\label{tabular_third_NO4_e}
	\end{table}
	
	\begin{table}[!ht]
		\centering
		\caption{\bf{Example \ref{example_third_NO4}} with  $\lambda=\frac{1}{2}$: $\left\|e^*(\theta)\right\|_{0,\omega^{\alpha,\beta,1}}$ and $\|e^*(\theta)\|_{\infty}$.}
		\begin{tabular}{llllll}
			\hline$N$& 5 & 7 & 10 & 13 & 15\\
			\hline$L^2$-error & $4.04976\mathrm{e}-03$ & $3.63609\mathrm{e}-04$ & $9.97540\mathrm{e}-05$ &$2.26774\mathrm{e}-06$ &$5.10405\mathrm{e}-09$\\
			
			$L^{\infty}$-error & $9.62480\mathrm{e}-03$ & $9.06380\mathrm{e}-04$ & $2.45406\mathrm{e}-04$ &$5.56027\mathrm{e}-06$ &$1.10183\mathrm{e}-08$\\
			\hline
		\end{tabular}
		\label{tabular_third_NO4_e*}
	\end{table}
\end{example}

\begin{example}\label{example_third_NO5}
	Next, we consider another special case $\mu=2-\sqrt{2}$ where there is no $\lambda$ such that $y\left(t^{\frac{1}{\lambda}}\right)$ and $y^{\prime}\left(t^{\frac{1}{\lambda}}\right)$ are analytical. All other parameters remain unchanged. The exponential convergence result can be observed in Figure \ref{Fig_third_NO5_0.5}, by comparison, however, Figure \ref{Fig_third_NO5_1} exhibit an algebraic convergence result. The cause of this situation may be due to the fact that $y\left(t\right)$ and $y^{\prime}\left(t\right)$ are less smooth than $y\left(t^2\right)$ and $y^{\prime}\left(t^2\right)$. So fractional collocation method always converges faster than polynomial collocation method. Therefore,  it is very important to select the value of $\lambda$ to obtain better numerical errors.
	
	\begin{figure}
		\subfloat[$\lambda=\frac{1}{2},\left\|e(\theta)\right\|_{0,\omega^{\alpha,\beta,1}},\|e(\theta)\|_{\infty}$]{\includegraphics[width=0.5\textwidth]{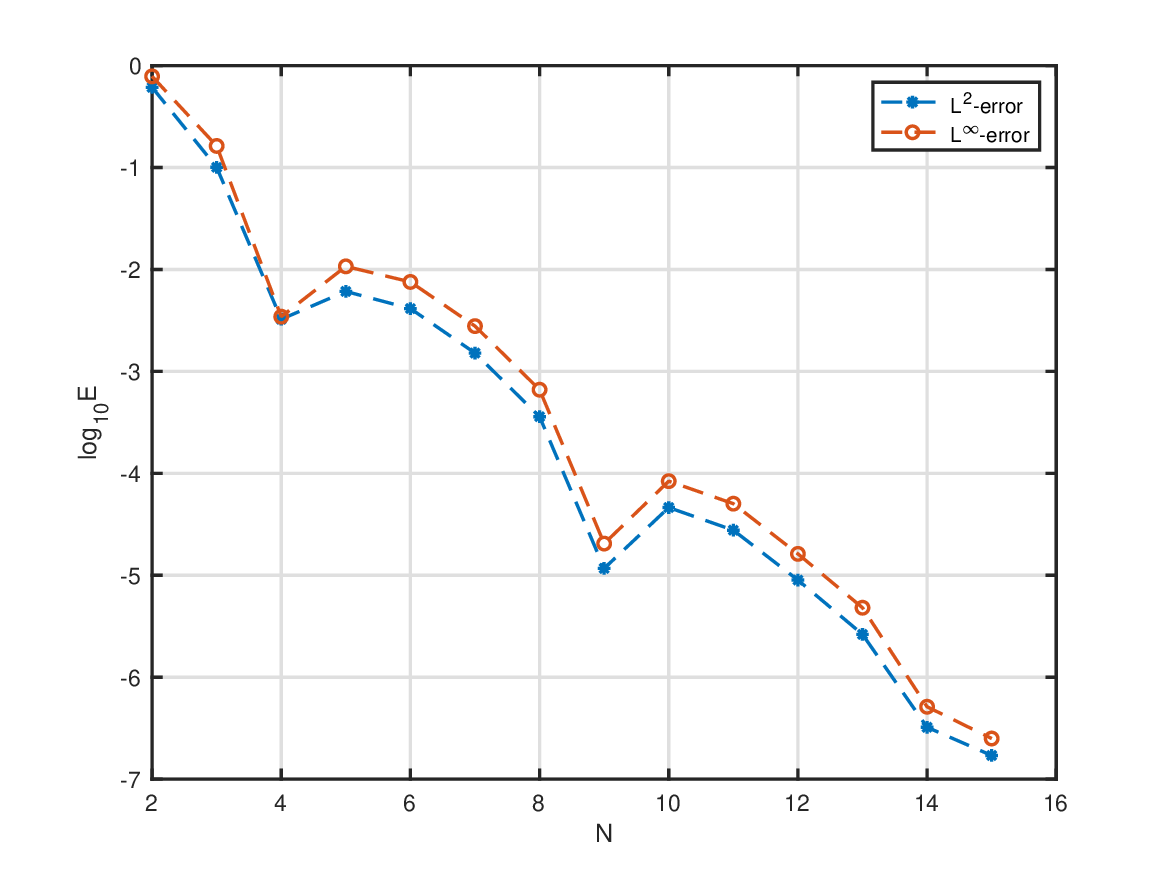}}%
		\hfill
		\subfloat[$\lambda=\frac{1}{2},\left\|e^*(\theta)\right\|_{0,\omega^{\alpha,\beta,1}},\|e^*(\theta)\|_{\infty}$]{\includegraphics[width=0.5\textwidth]{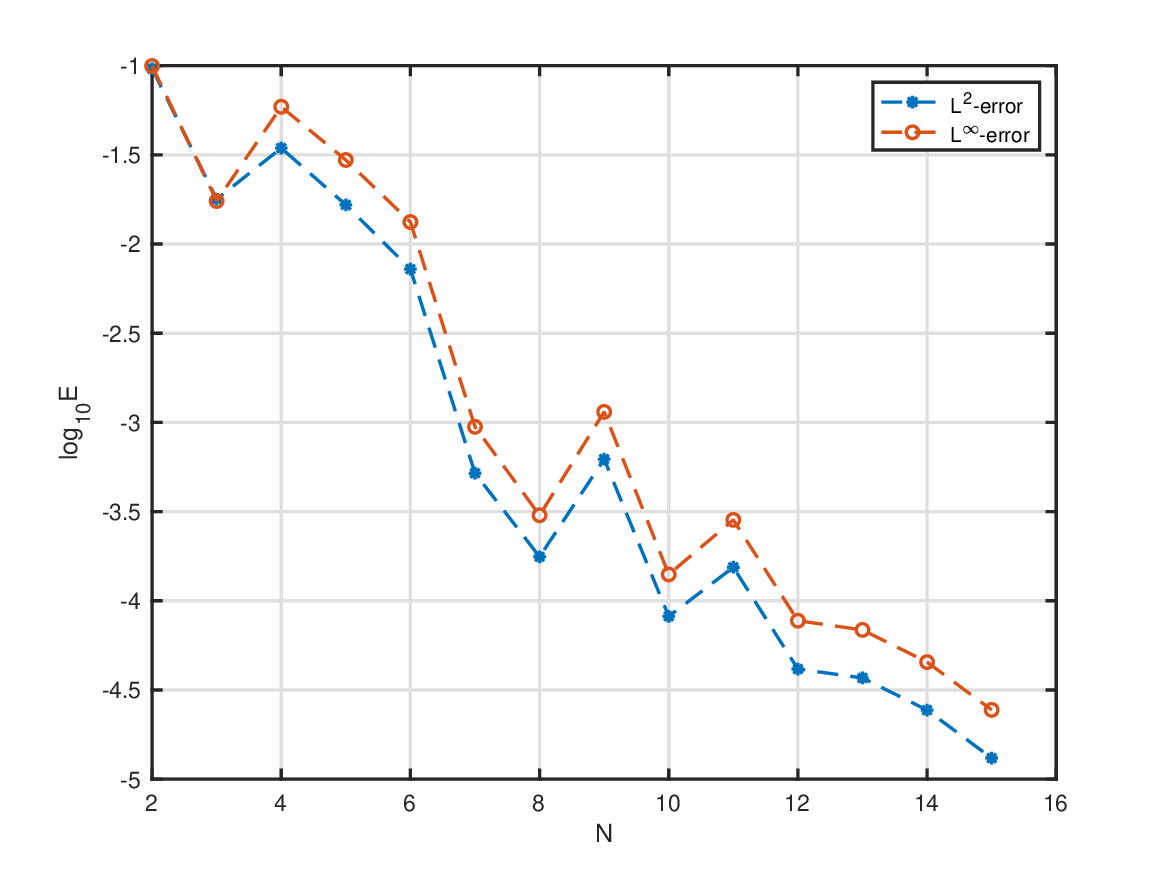}}
		\caption{\bf{Example \ref{example_third_NO5} } with $\lambda=\frac{1}{2}$}
		\label{Fig_third_NO5_0.5}
	\end{figure}
	
	\begin{figure}
		\subfloat[$\lambda=1,\left\|e(\theta)\right\|_{0,\omega^{\alpha,\beta,1}},\|e(\theta)\|_{\infty}$]{\includegraphics[width=0.5\textwidth]{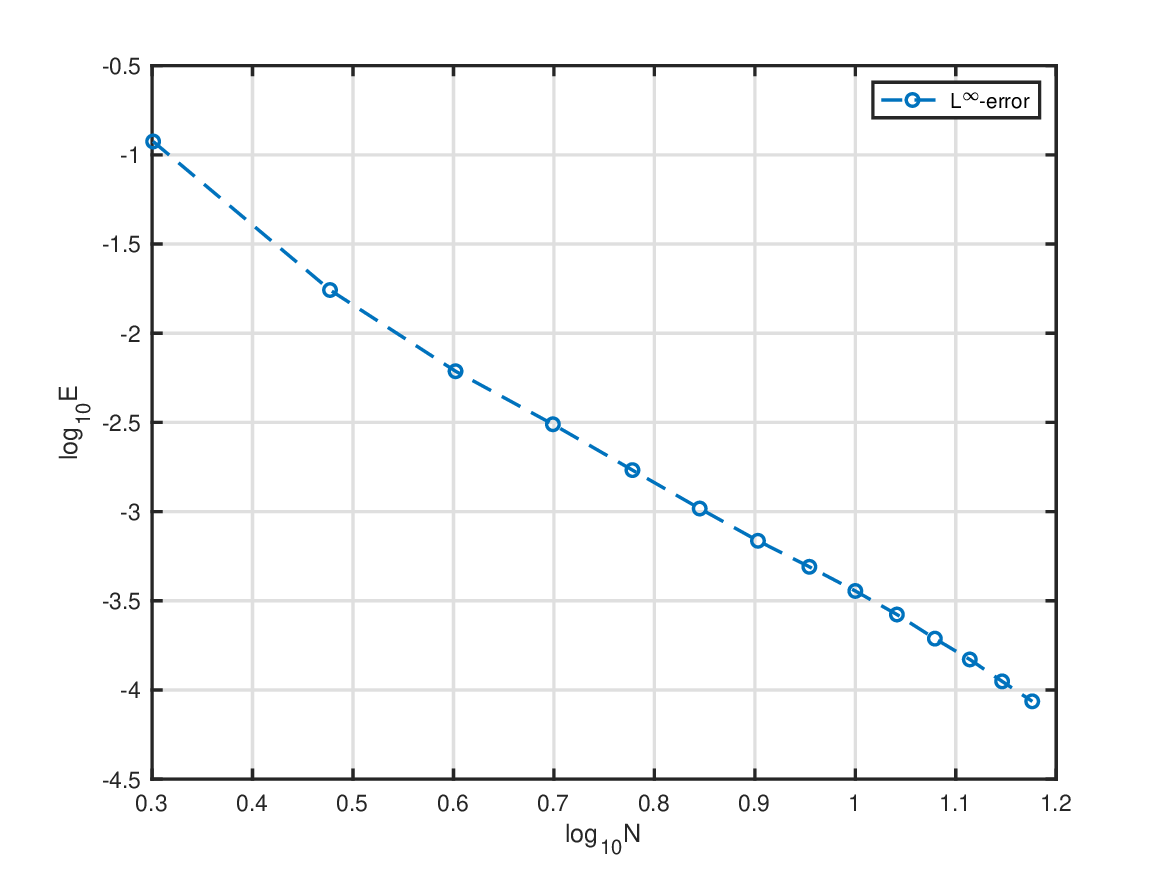}}%
		\hfill
		\subfloat[$\lambda=1,\left\|e^*(\theta)\right\|_{0,\omega^{\alpha,\beta,1}},\|e^*(\theta)\|_{\infty}$]{\includegraphics[width=0.5\textwidth]{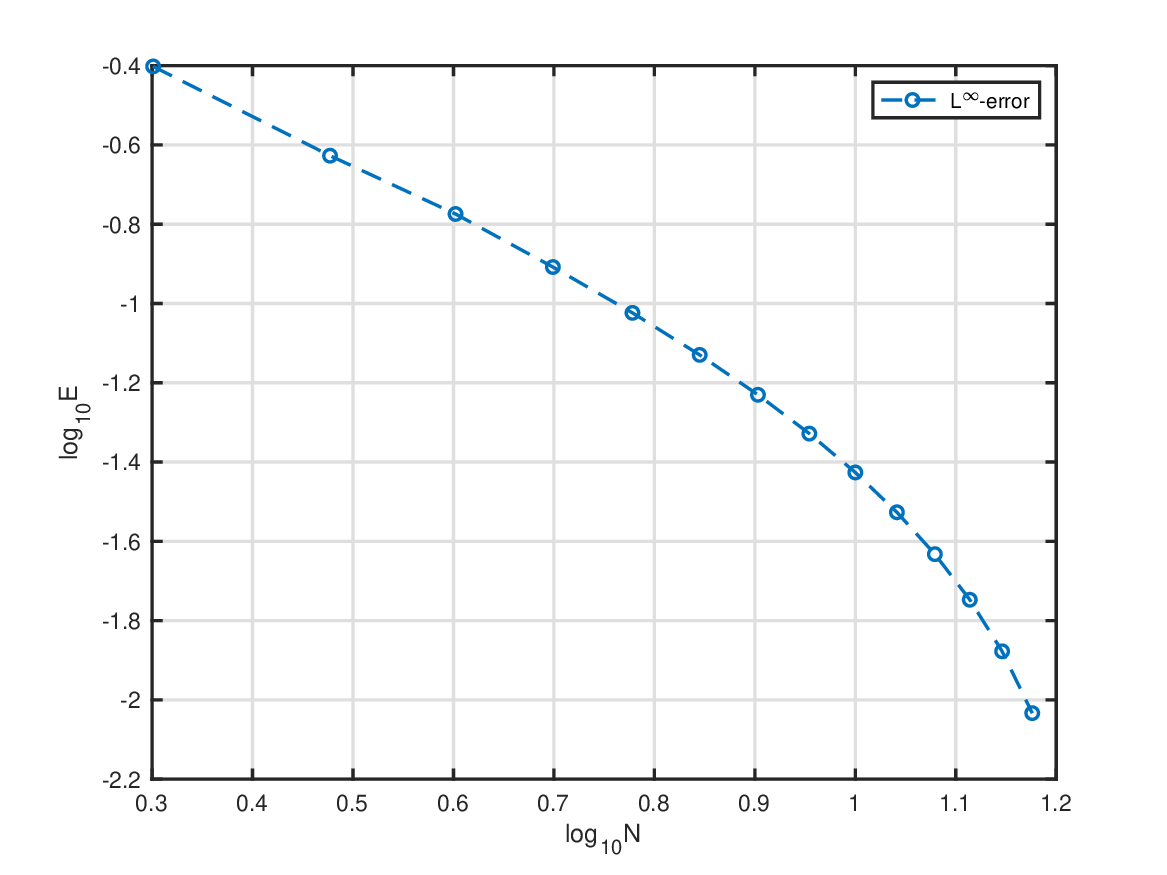}}
		\caption{\bf{Example \ref{example_third_NO5} } with $\lambda=1$}
		\label{Fig_third_NO5_1}
	\end{figure}
		\begin{table}[!ht]
		\centering
		\caption{\bf{Example \ref{example_third_NO5} } with $\lambda=\frac{1}{2}$: $\left\|e(\theta)\right\|_{0,\omega^{\alpha,\beta,1}}$ and $\|e(\theta)\|_{\infty}$.}
		\begin{tabular}{llllll}
			\hline$N$ & 5 & 8 & 10 & 13 & 15\\
			\hline$L^2$-error & $6.09366\mathrm{e}-03$ & $3.59672\mathrm{e}-04$ & $4.62133 \mathrm{e}-05$ & $2.63270\mathrm{e}-06$ & $1.70753\mathrm{e}-07$\\
			$L^{\infty}$-error & $1.07296\mathrm{e}-02$ & $6.61803\mathrm{e}-04$ & $8.38297 \mathrm{e}-05$ &$4.80641\mathrm{e}-06$ &$2.50450\mathrm{e}-07$\\
			\hline
		\end{tabular}
		\label{tabular_third_NO5_e}
	\end{table}
	
	\begin{table}[!ht]
		\centering
		\caption{\bf{Example \ref{example_third_NO5}} with  $\lambda=\frac{1}{2}$: $\left\|e^*(\theta)\right\|_{0,\omega^{\alpha,\beta,1}}$ and $\|e^*(\theta)\|_{\infty}$.}
		\begin{tabular}{llllll}
			\hline$N$& 5 & 8 & 10 & 13 & 15\\
			\hline$L^2$-error & $1.65675\mathrm{e}-02$ & $1.76491\mathrm{e}-04$ & $8.17439\mathrm{e}-05$ &$3.69433\mathrm{e}-05$ &$1.31322\mathrm{e}-05$\\
			
			$L^{\infty}$-error & $2.95843\mathrm{e}-02$ & $3.01460\mathrm{e}-04$ & $1.40496\mathrm{e}-04$ &$6.86499\mathrm{e}-05$ &$2.44657\mathrm{e}-05$\\
			\hline
		\end{tabular}
		\label{tabular_third_NO5_e*}
	\end{table}
\end{example}

\section{Conclusions}\label{section_Conclusions}
This paper presents a spectral collocation method using fractional Jacobi polynomials for solving the third-kind weakly singular Volterra integro-differential equations with proportional delays. Firstly, we elucidate the regularity conditions of the equation. The most important contribution is that we obtain the error estimates for the fractional numerical scheme. The  estimates can achieve exponential convergence when the appropriate $\lambda$ is selected to make $y(t^{\frac{1}{\lambda}})$ and $y^{\prime}(t^{\frac{1}{\lambda}})$ analytical. Finally, numerical experiments verify the theoretical proof results and the effectiveness of the proposed method.

\clearpage

\end{document}